\documentclass[11pt]{amsart}
\usepackage{amssymb,amsmath}
\usepackage{amscd}
\usepackage{tikz}
\usetikzlibrary{arrows,shapes}
\usetikzlibrary{decorations.markings}
\textheight21cm
\usepackage{amssymb}
\usepackage{latexsym}
\input amssym.def
\usepackage[applemac]{inputenc}

\font\tenmsb=msbm10 \font\sevenmsb=msbm7 \font\fivemsb=msbm5
\newfam\msbfam
\textfont\msbfam=\tenmsb \scriptfont\msbfam=\sevenmsb
\scriptscriptfont\msbfam=\fivemsb

\font\teneufm=eufm10 \font\seveneufm=eufm7 \font\fiveeufm=eufm5
\newfam\eufmfam
\textfont\eufmfam=\teneufm \scriptfont\eufmfam=\seveneufm
\scriptscriptfont\eufmfam=\fiveeufm

\def\co#1{
}

\renewcommand{\epsilon}{\varepsilon}
\renewcommand{\setminus}{\smallsetminus}









\newcommand{\Z}{\mathbb Z}
\newcommand{\N}{\mathbb N}


\newcommand{\SL}{\operatorname{SL}}


\newcommand{\FP}{\operatorname{FP}}


\newcommand{\cohom}[3]{H^{{\raise1pt\hbox{$\scriptstyle#1$}}}(#2\>\!,#3)}
\newcommand{\tatecohom}[3]%
  {\widehat H^{{\raise1pt\hbox{$\scriptstyle#1$}}}(#2\>\!,#3)}

\newcommand{\Cohom}[3]%
  {H^{{\raise1pt\hbox{$\scriptstyle#1$}}}\big(#2\>\!,#3\big)}
\newcommand{\Tatecohom}[3]%
  {\widehat H^{{\raise1pt\hbox{$\scriptstyle#1$}}}\big(#2\>\!,#3\big)}

\newcommand{\homol}[3]{H_{{\lower1pt\hbox{$\scriptstyle#1$}}}(#2\>\!,#3)}
\newcommand{\homolog}[2]{H_{{\lower1pt\hbox{$\scriptstyle#1$}}}(#2)}




\newcommand{\im}{\operatorname{Im}}


\newcommand{\ra}{\rightarrow}








\newcommand{\nfset}{{\mathcal{N}}}
\newcommand{\pres}{{\mathcal{P}}}
\def\nf#1{\mathsf{nf}(#1)}    
\def\nfv#1{\mathsf{nf}(#1)}    
    
\newcommand{\tria}{$\nfset$-triangular}
\newcommand{\trig}{$\nfset_G$-triangular}
\newcommand{\trip}{$\nfset_{G_p}$-triangular}
\newcommand{\ft}{fully \tria}
\newcommand{\ftg}{fully \trig}
\newcommand{\ftp}{fully \trip}
\newcommand{\nformed}{$\nfset$-labeled}  
\renewcommand{\ra}{\rightarrow}

\newcommand{\bo}{{\partial}}

\newcommand{\ff}{\Phi}  
\newcommand{\ga}{\Gamma}  
\newcommand{\pp}{\mathcal{P}}
\newcommand{\dd}{\Delta}
\newcommand{\lbl}{{\mathsf{label}}}  
\newcommand{\rep}{\rho}
\newcommand{\path}{{\mathsf{path}}}  
\newcommand{\tree}{{\mathcal{T}}}  
\renewcommand{\ff}{\Phi} 
\newcommand{\sff}{\phi} 
\newcommand{\emptyword}{1}  
\newcommand{\groupid}{\epsilon}  
\newcommand{\graph}{{\mathsf{graph}}}
\newcommand{\proj}{{\mathsf{proj}}}
\newcommand{\atob}{\psi}  
\newcommand{\tail}{{\mathsf{tl}}}   
\newcommand{\Tail}{{\mathsf{Tail}}} 
\newcommand{\head}{{\mathsf{hd}}}   
\newcommand{\trans}{{\mathsf{trans}}}  
\newcommand{\subg}{{\mathsf{sub}}}  
\newcommand{\last}{{\mathsf{last}}}  
\newcommand{\cprs}{complete prefix-rewriting system}
\newcommand{\prs}{prefix-rewriting system}
\newcommand{\syreg}{synchronously regular}
\newcommand{\pad}{{\mathsf{pad}}}
\newcommand{\ism}{\hat\rho}  
\newcommand{\bj}{\rho}  
\newcommand{\occ}{{\mathsf{occ}}}

 
\newcommand{\ppi}{\atob}

\newtheorem{theorem}{Theorem}[section]
\newtheorem{corollary}[theorem]{Corollary}
\newtheorem{proposition}[theorem]{Proposition}
\newtheorem{lemma}[theorem]{Lemma}
\newtheorem{definition}[theorem]{Definition}

\newtheorem{notation}[theorem]{Notation}

\newtheorem{remark}[theorem]{Remark}

\begin{document}

\title{HNN extensions and stackable groups}

\author[S.~Hermiller]{Susan Hermiller}
\address{Department of Mathematics\\
        University of Nebraska\\
         Lincoln NE 68588-0130, USA}
\email{hermiller@unl.edu}

\author[C.~Mart\'inez-P\'erez]{Conchita Mart\'inez-P\'erez}
\address{Departamento de Matem\'ticas\\
        Universidad de Zaragoza\\
         50009 Zaragoza, Spain}
\email{conmar@unizar.es}

\thanks{2010 {\em Mathematics Subject Classification}. 20F65; 20F10, 20F16, 68Q42}

\begin{abstract}
Stackability for finitely presented groups consists
of a dynamical system that iteratively moves paths
into a maximal tree in the Cayley graph.  Combining
with formal language theoretic restrictions  
yields auto- or algorithmic stackability,
which implies solvability of the word problem. 
In this paper we give two new characterizations of
the stackable property for groups, and
use these to show that every HNN extension
of a stackable group is stackable.  We apply
this to exhibit a wide range of Dehn functions
that are admitted by stackable and autostackable
groups, as well as an example of a 
stackable group with unsolvable word problem. 
We use similar methods to show that there exist finitely presented
metabelian groups that are non-constructible
but admit an autostackable structure.
\end{abstract}

\maketitle


\section{Introduction}\label{sec:intro}


Autostackability of finitely generated groups is
a topological property of the Cayley graph combined
with formal language theoretic restrictions, which is
an extension of the notions of automatic
groups and groups with finite complete
rewriting systems, introduced by Brittenham, Hermiller
and Holt in~\cite{bhj}.
An autostackable structure for a finitely generated
group implies a finite presentation,
a solution to the word problem, and
a recursive algorithm for building van Kampen
diagrams~\cite{MarkSusan}.
Moreover, in contrast to automatic groups,
Brittenham and Hermiller
together with Susse have shown that
the class of autostackable groups includes
all fundamental groups of 3-manifolds~\cite{bht},
 with Holt they have shown autostackable examples
of solvable groups that are not
virtually nilpotent~\cite{bhh}, and with Johnson
they show that Stallings' non-$FP_3$ group~\cite{bhj}
is autostackable.
In analogy with the relationship between automatic
and combable groups,
removing the formal language theoretic restriction
gives the stackable property for finitely
generated groups, and stackability implies
tame combability~\cite{bhtame}.
In this paper we give two new characterizations
of the stackability property, and determine closure of
stackability under HNN extensions.  We
then apply these results to a variety of 
examples to exhibit stackable groups that
are not algorithmically stackable and to
explore the Dehn functions of stackable, algorithmically
stackable, and autostackable groups. In the last section we also show that nonconstructible metabelian groups can be autostackable.

To make this more precise,
let $G$ be a group with a finite
inverse-closed generating set $X$, and   
let $\ga=\ga(G,X)$ be the associated
Cayley graph.  Denote the set of directed edges
in $\ga$ by $\vec E$, and the set of directed
edge paths by $\vec P$.
For each $g \in G$ and $a \in X$,
let $e_{g,a}$ denote the directed edge 
with initial vertex $g$, terminal vertex $ga$,
and label $a$;
we view the two directed edges $e_{g,a}$ 
and $e_{ga,a^{-1}}$  to
have a single underlying undirected edge in $\ga$.

A {\em flow function}
associated to a maximal tree $\tree$ in $\ga$ is a
function $\ff:\vec E \ra \vec P$ 
satisfying the properties that: 
\begin{itemize}
\item[(F1)] For each edge $e \in \vec E$,
the path $\ff(e)$ has the same initial and terminal
vertices as $e$.
\item[(F2d)] If the undirected edge underlying $e$ 
lies in the tree $\tree$, then $\ff(e)=e$.
\item[(F2r)]  The transitive closure 
$<_\ff$ of the relation $<$ on
$\vec E$ defined by 
\begin{itemize}
\item[]
$e' < e$ whenever $e'$ lies on the path $\ff(e)$
and the undirected edges underlying both
$e$ and $e'$ do not lie in $\tree$,
\end{itemize}
is a well-founded strict
partial ordering.
\end{itemize}
The flow function is {\em bounded} if there is
a constant $k$ such that for all $e \in \vec E$,
the path $\ff(e)$ has length at most $k$.
The map $\ff$ fixes the edges lying in the tree $T$
and describes a ``flow'' of the
non-tree edges toward the tree (or toward the basepoint);
starting from a non-tree edge and
iterating this function finitely many times results
in a path in the tree.

For each element $g \in G$, let $\nf{g}$ denote the
label of the unique geodesic (i.e., without backtracking)
path in the maximal tree $\tree$ from the identity element
$\groupid$ of $G$ to $g$, and let
$\nfset=\nfset_\tree:=\{\nf{g} \mid g \in G\}$ denote the 
set of these (unique) normal forms.
We use functions that pass between
paths and words by defining $\lbl:\vec P \ra X^*$ to be
the function that maps each directed path
to the word labeling that path and defining
$\path:\nfset \times X^* \ra \vec P$ to be
$\path(\nf{g},w) :=$ the path in $\ga$ that 
starts at $g$ and is labeled by $w$.
Observe that $\path(\nfset \times X) = \vec E$.

\begin{definition}\cite{MarkSusan,bhh}\label{def:autostackable}
Let $G$ be a group with a finite inverse-closed generating
set $X$.
\begin{enumerate}
\item The group $G$ is {\em stackable} over $X$ if there is a bounded
flow function on a maximal tree in the associated
Cayley graph. 
The {\em stacking map} is
$$\sff:=\lbl \circ \ff \circ \path: \nfset_\tree \times X \ra X^*.$$ 
\item The group $G$ is {\em algorithmically stackable}
over $X$ if $G$ admits a bounded flow function $\ff$ for which 
the graph 
$$
\graph(\sff):=\{(\nf{g},a,\phi(\nf{g},a)) \mid g \in G, a \in X\}
$$
of the stacking map $\phi$ is
decidable.
\item The group $G$ is {\em autostackable}
over $X$ if $G$ has a bounded flow function $\ff$
for which the graph 
of the associated stacking map is  synchronously regular.
\end{enumerate}
\end{definition}

A stackable group $G$ over a finite generating
set $A$ is finitely presented, with finite
presentation 
$R_\ff = \langle X \mid 
\{\sff(y,a) = a \mid y \in \nfset_\tree, a \in X\}\rangle$
(called the {\em stacking presentation})
associated to the flow function $\ff$.  

Each of these three stackability properties 
can also be stated in terms of prefix-rewriting systems.
A stackable structure is equivalent to
a bounded complete prefix-rewriting system
for $G$ over $X$, for which the irreducible
words are exactly the elements of the set
$\nfset_\tree$. A group is algorithmically
stackable (respectively, autostackable)
if and only if
it admits
a decidable (respectively, synchronously regular) 
bounded complete 
prefix-rewriting system~\cite{bhh}. 
(See Section~\ref{sec:notation} for 
definitions of these terms.)

In Section~\ref{sec:notation}, we begin with
notation and definitions we will use throughout
the paper.

Section~\ref{sec:fullytriangular} contains several 
characterizations
of stackability using properties of their
van Kampen diagrams, which we apply in our
proofs in later sections of this paper.

Section~\ref{sec:hnn} contains the 
proof of the following closure property for
the class of stackable groups with respect
to HNN extensions.

\smallskip

\noindent{\bf Theorem~\ref{thm:HNN}.}
{\em Let $H$ be a stackable group, 
let $A,B \leq H$ be finitely generated, and let $\atob:A\to B$ be
an isomorphism.  
Then the HNN extension $G=H\ast_\atob$ is also stackable. 
}

\smallskip

Corollary~\ref{cor:autohnn}
addresses closure of auto- and algorithmic stackability
of HNN extensions with additional constraints.
In the case of algorithmically stackable groups,
HNN extension closure can also be stated in terms of
a decision problem.
For a group $H$ with a finite inverse-closed
generating set $Y$ and subgroup $A$,
the {\em subgroup membership problem}
is decidable if there is an algorithm that,
upon input of any word $w$ over $Y$,
determines whether or not $w$ represents
an element of $A$.

\smallskip

\noindent{\bf Corollary~\ref{cor:alghnn}.}
{\em Let $H$ be an algorithmically stackable group, 
let $A,B \leq H$ be finitely generated, and let $\atob:A\to B$ be
an isomorphism.  
Suppose further that the subgroup membership problem
is decidable for the subgroups $A$ and $B$ in $H$.
Then the HNN extension $G=H\ast_\atob$ is also 
algorithmically stackable. 
}

\smallskip

We give three applications of Theorem~\ref{thm:HNN}
in Section~\ref{sec:stkapplications}.  
In the first, we apply Mihailova's~\cite{mihailova}
construction of subgroups of direct products of
free groups with unsolvable subgroup membership
problem, to show that stackability
and autostackability are not the same property.

\smallskip

\noindent{\bf Theorem~\ref{thm:stknotauto}.}
{\em There exists a stackable group with unsolvable word problem,
and hence stackability does not imply algorithmic stackability.}

\smallskip

Consequently the class of stackable groups
includes groups whose Dehn function is not computable.
Our second application is a proof that the
hydra groups of Dison and Riley~\cite{disonriley}
are algorithmically stackable, and consequently  
algorithmically stackable groups
admit extremely large Dehn functions.

\smallskip

\noindent{\bf Theorem~\ref{thm:hydra}.}
{\em The class of algorithmically stackable groups includes 
groups with Dehn functions in each level of
the Grzegorczyk hierarchy of primitive 
recursive functions.}

\smallskip

For the third application in Section~\ref{sec:stkapplications}, we
we study 
the nonmetabelian Baumslag group~\cite{baumslagnon} 
(also known as the Baumslag-Gersten group),
which is an HNN extension of a
Baumslag-Solitar group (which is
autostackable~\cite{bhh}).
 
\smallskip

\noindent{\bf Theorem~\ref{thm:nonmet}.}
{\em Baumslag's nonmetabelian group 
$\langle a,s \mid (sas^{-1})a(sa^{-1}s^{-1}) = a^2\rangle$
is autostackable.}

\smallskip

Platonov~\cite{platonov} (and in the case of
the lower bound, Gersten~\cite{gersten})
has shown that the Dehn function of the
Baumslag nonmetabelian group
is the nonelementary function
$n \ra \mathsf{tower}_2(\log_2(n))$, where
$\mathsf{tower}_2(1) = 2$ and $\mathsf{tower}_2(k)=2^{\mathsf{tower}_2(k-1)}$.
Although it is an open question whether
Baumslag's nonmetabelian group, or any other 
group with nonelementary
Dehn function, can have a finite complete
rewriting system, these results show that
groups with a bounded \syreg\ complete
prefix-rewriting system 
admit such Dehn functions.


\smallskip

\noindent{\bf Corollary~\ref{cor:autodehn}.}
{\em The class of autostackable groups 
includes groups with nonelementary Dehn functions.}

\smallskip


Finally, in Section~\ref{sec:metabelian}, we
consider metabelian groups.  Groves and Smith~\cite{grovessmith}
showed that a metabelian group $G$ has a finite
complete rewriting system if and only if
$G$ has the  homological finiteness condition
$FP_\infty$, and also if and only if $G$ is constructible
(that is, $G$ can be obtained from finite groups
by iteratively taking finite extensions, 
amalgamations, and finite rank HNN extensions).
Since \syreg\ bounded prefix-rewriting
systems (i.e., autostackable structures) are an
extension of finite complete rewriting systems,
it is natural to ask whether autostackability is also equivalent to 
$FP_\infty$ and constructibility in metabelian
groups.  

We consider Baumslag's~\cite{Baumslag1} example of a
finitely presented metabelian group whose commutator subgroup 
has infinite rank.  This group is finitely presented,
and hence has homological type $\FP_2$, but
the rank of  the commutator subgroup implies that
the group is not constructible, and hence
is not of type $\FP_\infty$.
For $p<\infty$, the $p$-torsion analog of
Baumslag's metabelian group is the Diestel-Leader
group $\Gamma_3(p)$, whose Cayley graph with
respect to a certain finite generating set is
the  Diestel-Leader graph $DL_3(p)$; for details
and more information, see the paper of
Stein, Taback, and Wong~\cite{stw} and the references there.  
The Diestel-Leader groups are finitely presented
metabelian groups that are not of type $\FP_3$\cite{bnw},
and hence also are nonconstructible metabelian groups. 
Each of these groups can be realized as an
HNN extension, but since the base group
of this extension is not finitely generated,
Theorem~\ref{thm:HNN} of Section~\ref{sec:hnn} 
does not apply in this case, and new methods 
are developed in Section~\ref{sec:metabelian}
to show the following.


\smallskip

\noindent{\bf Theorem~\ref{thm:metabelian}.}  {\em
Baumslag's metabelian group 
$G_\infty=\langle a,s,t\mid a^s=a^ta, [a^t,a]=1,[s,t]=1\rangle$
is algorithmically stackable, 
and the Diestel-Leader torsion analogs
$G_p=\langle a,s,t\mid a^s=a^ta, [a^t,a]=1,[s,t]=1,a^p=1\rangle$
with $p \ge 2$ are 
autostackable.
}

\smallskip

This shows that there exist nonconstructible
metabelian groups that admit a \syreg\ bounded prefix-rewriting
systems, giving a neagtive answer to the question above.

\smallskip

\noindent{\bf Corollary~\ref{cor:metabelian}.}  {\em
The class of autostackable groups contains
nonconstructible metabelian groups.
}

\smallskip

The authors thank Tim Susse for helpful
discussions and useful suggestions during
the course of this work.


\section{Notation and background}\label{sec:notation}


Throughout this paper, let $G$ be a group
with a finite {\em symmetric} generating set $X$; that
is, such that the generating set $X$ is closed under inversion.
Throughout the paper we assume that no element of $X$
represents the identity element of $G$.

Let $X^*$ denote the set of all words over $X$,
and let $X^+$ denote the set of all words except the 
empty word $\emptyword$.
A set $\nfset$ of {\em normal forms} for $G$ over $X$ is a 
subset of 
$X^*$ such that the restriction of the
canonical surjection $\rep: X^* \ra G$
to $\nfset$ is a bijection.
As in Section~\ref{sec:intro}, the symbol $\nf{g}$ denotes
the normal form for $g \in G$.  By slight abuse
of notation, we use the symbol $\nf{w}$ to denote the
normal form for $\rep(w)$ whenever $w \in X^*$.

For a word $w \in X^*$, we write $w^{-1}$ for the 
formal inverse of $w$ in $X^*$,
and let $l(w)$ denote
the length of the word $w$.
For words $v,w \in X^*$, we write $v=w$ if $v$
and $w$ are the same word in $X^*$, and write $v=_G w$ if
$v$ and $w$ represent the same element of $G$.

Let $\groupid$ denote the identity of $G$.
For $g,h \in G$, we use $g^h$ to denote the
conjugate $hgh^{-1}$ of $g$.

A {\em symmetrized} presentation 
$\pp = \langle X \mid R \rangle$ for $G$
satisfies the properties that
the generating set $X$ is symmetric
and the set $R$ of defining relations is closed under
inversion and cyclic conjugation.
Let $C$ be the Cayley 2-complex corresponding to this presentation,
whose 1-skeleton $C^1=\ga$ is the Cayley graph of $G$ over $A$.
For $g \in G$ and $x \in X$, let $e_{g,x}$ denote
the edge of $\ga$ labeled by $x$ with initial vertex $g$.
We consider the two directed edges
$e_{g,x}$ and $e_{gx,x^{-1}}$ to have the same
underlying directed edge in $\ga$ between 
the vertices $g$ and $gx$.


\subsection{Diagrams}\label{subsec:vkd}


For an arbitrary word $w$ in $X^*$
that represents the
trivial element $\groupid$ of $G$, there is a {\em van Kampen
diagram} $\dd$ for $w$ with respect to $\pp$.  
That is, $\dd$ is a finite,
planar, contractible combinatorial 2-complex with 
edges directed and
labeled by elements of $X$, satisfying the
properties that the boundary of 
$\dd$ is an edge path labeled by the
word $w$ starting at a basepoint 
vertex $*$ and
reading counterclockwise, and every 2-cell in $\dd$
has boundary labeled by an element of $R$.
(Note that we do not assume that van Kampen diagrams
in this paper are reduced; that is, we allow adjacent
2-cells in $\dd$ to be labeled by the same relator with
opposite orientations.)

For any van Kampen diagram
$\dd$ with basepoint $*$, 
let $\pi_\dd:\dd \ra C$
denote a cellular map such that $\pi_\dd(*)=\groupid$ and
$\pi_\dd$ maps edges to edges preserving both
label and direction.
Given $w\in X^*$, we denote by 
$w\Delta$ the diagram obtained by gluing the terminus of a path labeled 
by $w$ to the basepoint $*$ of $\Delta.$  


\subsection{Rewriting systems and languages}\label{subsec:rs}


The {\em regular} languages over a finite set $X$ are the
subsets of $X^*$ obtained from the finite subsets
of $X^*$ using finitely many operations from among
union, intersection, complement, 
concatenation ($S \cdot T := \{vw \mid v \in S$ and $w \in T\}$),
and Kleene star ($S^0:=\{\emptyword\}$, $S^n := S^{n-1} \cdot S$ and
$S^* := \cup_{n=0}^\infty S^n$).  
Equivalently, a subset $L \subseteq X^*$ is
regular if there is a monoid homomorphism
$\gamma:X^* \ra M$ for some finite monoid $M$,
such that $L$ is the preimage $L=\gamma^{-1}(S)$
for a subset $S$ of $M$.  Also equivalently,
a language $L \subseteq X^*$ is regular if
$L$ is the language accepted by a finite state
automaton.

A subset $L \subseteq (X^*)^n$ is called
a {\em \syreg} language if the padded extension set
$\{\pad(w) \mid w \in L\}$ is a regular language
over the finite alphabet $(X \cup \{\$\})^n$
(with $\$ \notin X$)
where $\pad(a_{1,1} \cdots a_{1,m_1},...,
a_{n,1} \cdots a_{n,m_n}) :=
((a_{1,1},...,a_{n,1}),...,(a_{1,N},...,a_{n,N}))$ for $N={\rm max}\{m_i\}$
whenever $a_{i,j} \in X$ for all $1 \le i \le n$
and $1 \le j \le m_i$ and $a_{i,j}=\$$ otherwise.

The class of regular
languages is closed under both image and
preimage via monoid homomorphisms and under quotients, and
the class of \syreg\ languages is closed with respect to
finite unions and intersections, Cartesian products,
and projection onto a single coordinate.

A language $L \subseteq X^*$ is {\em decidable}, also known as
{\em recursive}, if there is a Turing machine
that, upon input of any word $w$ over $X$, determines
(in a finite amount of time) whether or not $w \in L$.
The class of decidable languages is also 
closed under 
union, intersection, complement, 
concatenation,
Kleene star, and
image via monoid homomorphisms
(that map nonempty words to nonempty words).

See~\cite{echlpt} and~\cite{hu} for more information
about regular, \syreg, and decidable languages.

A {\em \cprs} for a group $G$ consists of a set $X$
and a set of rules $R \subseteq X^* \times X^*$
(with each $(u,v) \in R$ written $u \ra v$)
such that  $G$ is presented
(as a monoid) by 
$G = Mon\langle X \mid u=v$ whenever $u \ra v \in R \rangle,$
and the rewritings 
$uy \ra vy$ for all $y \in X^*$ and $u \ra v$ in $R$ satisfy:
\begin{itemize}
\item[(1)] there is no infinite chain $w \ra x_1 \ra x_2 \ra \cdots$
of rewritings, and
\item[(2)] each $g \in G$ is 
represented by exactly one irreducible word
over $X$.
\end{itemize}
The \prs\ is {\em bounded} if $X$ is finite
and there is a constant
$k$ such that for each pair $(u,v)$ in $R$
there are words $s,t,w \in X^*$ such that
$u=ws$, $v=wt$, and $l(s)+l(t) \le k$.
The \prs\ is {\em \syreg} 
if the set $X$ is finite and the
set of rules $R$ is \syreg. 

A {\em finite complete rewriting system}
for a group $G$
is a finite set $R' \subseteq X^* \times X^*$
presenting $G$ as a monoid, such that 
the rewritings 
$xuy \ra xvy$ for all $x,y \in X^*$ and $u \ra v$ in $R'$ satisfy
(1) and (2) above.  
Any finite complete rewriting system $R'$
has an associated \syreg\ bounded \cprs\  given by
$R=\{xu \ra xv \mid x \in X^*, (u,v) \in R'\}$.


\section{The \ft\ and \nformed\ properties}\label{sec:fullytriangular}


In this section we develop several conditions that
are equivalent to stackability, which will be used
to simply proofs in later sections of this paper.

Let $G$ be  a group with a finite 
presentation $\pres = \langle X \mid R\rangle$, where $X$ is inverse-closed
and $R$ is closed under inversion and cyclic
conjugation.  Let $\nfset$ be
a prefix-closed set of normal forms for $G$ over $X$.  
For each element $g$ in $G$, let $\nf{g}$ denote
the element of $\nfset$ representing $g$.
Let $\tree_\nfset$ denote the tree in the Cayley graph
$\Gamma$ for $G$ over $A$ consisting of the underlying
undirected edges that lie along paths from
the identity vertex $\groupid$ labeled by elements of $\nfset$.

\begin{definition}\label{def:triangular}
{\rm 
We say that a van Kampen diagram $\Delta$ is {\em \tria} 
if $\partial\Delta$ is labeled by a word of the form $\nf{g}x\nf{gx}^{-1}$ 
with $g \in G$ and $x \in X$. 
We refer to the paths $p_{lower}$, $p_x$, $p_{upper}$
in $\bo\Delta$ labeled by $\nf{g}$, $x$, and $\nf{gx}$
as the {\em lower normal form}, {\em isolated edge}, and
{\em upper normal form} of $\Delta$, respectively.
}
\end{definition}

\begin{definition}\label{def:nformed}
{\rm 
A \tria\ van Kampen diagram $\Delta$ is {\em \nformed} 
if for every vertex $v$ in the 0-skeleton $\Delta^{(0)}$
of $\Delta$, there is a path in $\Delta$
from the basepoint to $v$ labeled by the normal
form $\nf{\pi_{\Delta}(v)}$. In this case $\Delta$ determines a set of normal 
forms starting at the basepoint, namely 
$$
\nfv{\Delta} := \{\nf{\pi_\Delta(v)} \mid v \in \Delta^{(0)}\}.
$$ 

}
\end{definition}

If $G$ is stackable over the normal form set $\nfset$,
with stacking relations contained in $R$, 
then there is a recursive procedure for building van Kampen
diagrams for $G$ over the presentation $\pres$;
see~\cite{MarkSusan} for details of this
{\em stacking procedure}.  
In the following recursive definition we describe
a similar property.

\begin{definition}\label{fullytriangular} 
{\rm Fully \tria\ diagrams are recursively defined
as follows.
A diagram $\Delta$ is called
\begin{itemize}
\item[i)] 
{\em degenerate} if 
$\Delta$ has no 2-dimensional cells. 

\item[ii)] 
{\em minimal} if 
$\Delta$ has a single 2-dimensional cell $\sigma$
and $p_x \subset \partial\sigma$.

\item[iii)] 
{\em \ft} if $\Delta$ is either degenerate, minimal,
or there is a 2-dimensional cell
$\sigma$ in $\Delta$ with $p_x \subset \partial\sigma$, which we call
the {\sl isolated cell}, satisfying the following property.
If $e_1,...,e_t$ are the successive edges of the path
in $\partial\sigma \setminus p_x$ from the initial vertex
to the terminus of the edge $p_x$, then
for each $i = 1,...,t$ there is a 
\ft\ van Kampen diagram
$\Delta_i\subseteq\Delta$ having $e_i$ as isolated edge and the 
same basepoint as $\Delta$, such that
\begin{itemize} 
\item for each $i$, $\Delta_i\cap\Delta_{i+1}$ is both the upper normal form 
of $\Delta_i$ and the lower normal form of $\Delta_{i+1}$,
and 
\item $\Delta$ is the disjoint union of the $\Delta_i$ and $\sigma$, 
with the $\Delta_i$ glued
along these successive normal forms, and $\sigma$ glued
to the $\Delta_i$ along the edges $e_i$.
\end{itemize}
\end{itemize}
In this case,  $\Delta$ determines a set of \ft\ van Kampen diagrams, 
namely, the $\Delta_i$ together with the set of \ft\  
van Kampen diagrams determined by them; we denote this set by 
$\text{ft}(\Delta)$.
}
\end{definition}

We note that every \ft\ van Kampen diagram is also \nformed.
Also note that if $w\in X^*$ and  
$\Delta$ is \nformed\ or \ft, and moreover if  
for every $\mu\in \nfv{\Delta}$ we have $w\mu\in\mathcal{N}$,
then $w\Delta$ is also \nformed\ or \ft, respectively.

In the case that the group $G$ is stackable with respect to $\nfset$,
let $\Delta_{g,x}$ denote the van Kampen diagram with
boundary $\nf{g}x\nf{gx}^{-1}$ obtained using the stacking procedure.
Given a directed edge $e_{g,x}$ of the Cayley graph $\Gamma$,
if $e_{g,x}$ lies in the tree $\tree_\nfset$, 
then $\Delta_{g,x}$ is degenerate, and so
is \ft.
To check that for any edge $e_{g,x}$
not in $\tree_\nfset$ the diagram $\Delta_{g,x}$ 
is \ft, we note that 
the isolated cell has boundary $\Phi(e_{g,x})$ and $p_x$,
and the recursive property in Definition \ref{fullytriangular}~iii) 
holds true by induction using the partial order $<_\Phi$ on the set 
of recursive edges given by the stacking system.
In fact, 
if $e_{h,y} <_\Phi e_{g,x}$, then
$e_{h,y}$ is an edge needed in the process to transform $\nf{g}x$ into 
its normal form in the stacking
reduction procedure, and $\Delta_{h,y} \in \text{ft}(\Delta_{g,x})$. 

\begin{definition}\label{def:stksys}
{\rm A {\em stackable system of \ft\ van Kampen diagrams} 
(respect to $\pres$) is a set 
$$
\mathcal{S}:=\{\Delta_{g,x}\mid g \in G, x \in X
\}
$$
of \ft\ diagrams such that for each $g \in G$ and $x \in X$
the boundary of the diagram $\Delta_{g,x}$
is labeled by $\nf{g}x\nf{gx}^{-1}$, the diagram
$\Delta_{g,x}$ is degenerate if and only if $e_{g,x}$ is in
the tree $\tree_\nfset$, and 
whenever $\Delta_{g,x}\in\mathcal{S}$ contains more than
one 2-cell, the 
associated subdiagrams $\Delta_i$ in 
Definition \ref{fullytriangular}~iii) also belong to $\mathcal{S}$.}
\end{definition}

\begin{proposition}\label{equivalence}
The following are equivalent 
for a finitely presented group $G=\langle X \mid R\rangle$ 
with a prefix-closed normal form set $\nfset$ over $X$:
\begin{itemize}
\item[i)] $G$ is stackable with respect to $\mathcal{N}$.

\item[ii)] There is a stackable system of \ft\  van Kampen diagrams.

\item[iii)] For every 
$g \in G$ and $x\in X$ there is 
a \ft\  van Kampen diagram 
$\Delta_{g,x}$ with boundary $\nf{g}x\nf{gx}^{-1}$.

\item[iv)] For every $w \in \nfset$ and $x \in X$
there is a \nformed\  van Kampen diagram $\Delta_{w,x}'$
 with boundary $wx\nf{wx}^{-1}$.
\end{itemize}
\end{proposition}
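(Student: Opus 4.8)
\noindent\emph{Plan.} I would prove the cycle of implications (i)$\Rightarrow$(ii)$\Rightarrow$(iii)$\Rightarrow$(iv)$\Rightarrow$(i). The implication (i)$\Rightarrow$(ii) is essentially the content of the discussion preceding this proposition: assuming $G$ is stackable with respect to $\nfset$ (so that, as in the setup of this section, the stacking relators lie in $R$), the diagrams $\Delta_{g,x}$ produced by the stacking procedure are \ft, and the recursive subdiagrams arising in Definition~\ref{fullytriangular}~iii) are again of this form, so $\{\Delta_{g,x}\mid g\in G,\ x\in X\}$ is a stackable system of \ft\ diagrams; well-foundedness of $<_\ff$ is precisely what makes the recursion bottom out, and a non-tree edge forces a nondegenerate diagram because $\ff$ fixes exactly the tree edges. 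The implications (ii)$\Rightarrow$(iii) and (iii)$\Rightarrow$(iv) are immediate: (ii)$\Rightarrow$(iii) selects, for each $(g,x)$, the diagram $\Delta_{g,x}$ of the system, and (iii)$\Rightarrow$(iv) uses that every \ft\ diagram is \nformed\ (noted just after Definition~\ref{fullytriangular}) together with the substitution $w=\nf{g}$, under which $w$ ranges over $\nfset$ and $\nf{wx}=\nf{gx}$.

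\noindent\emph{The main implication, (iv)$\Rightarrow$(i).} Assume (iv), and for each $w\in\nfset$, $x\in X$ fix an \nformed\ diagram $\Delta'_{w,x}$ with boundary $wx\nf{wx}^{-1}$ having the fewest $2$-cells among all such diagrams; let $m(e_{g,x})$ denote this minimal number of $2$-cells. Define $\ff$ on directed edges of $\Gamma$ by: $\ff(e_{g,x})=e_{g,x}$ if $e_{g,x}$ lies in $\tree_\nfset$; otherwise, a short argument --- using that $\nf{g}$, $\nf{gx}$ are reduced and $\nfset$ is prefix-closed, so $\nf{g}x\nf{gx}^{-1}$ does not freely reduce to the empty word and $\nf{gx}$ does not end in $x$ --- shows the isolated edge $p_x$ of $\Delta'_{\nf{g},x}$ bounds a unique $2$-cell $\sigma$; writing $\bo\sigma=p_x\cdot f^{-1}$ with $f=e_1\cdots e_t$ the complementary boundary path of $\sigma$, set $\ff(e_{g,x})$ to be the image of $f$ in $\Gamma$. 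This path runs from $g$ to $gx$ and has length at most $\max_{r\in R}l(r)-1$, which is finite since $R$ is finite, so $\ff$ is bounded and satisfies (F1) and (F2d); moreover $\bo\sigma$ is labeled by an element of $R$, so the stacking relators of $\ff$ in fact lie in $R$.

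\noindent\emph{Property (F2r) --- the crux.} It remains to show the transitive closure $<_\ff$ is a well-founded strict partial order, which I would do by showing $m$ strictly decreases along $<$. Suppose $e_{h,y}<e_{g,x}$, so $e_{h,y}$ is the image of some $e_i$ on the path $f\subset\bo\sigma$ of $\Delta'_{\nf{g},x}$ and both edges are non-tree; say $e_i$ maps to $e_{h_i,y_i}$ in $\Gamma$. Deleting the open cell $\sigma$ from $\Delta'_{\nf{g},x}$ gives a diagram $\widehat\Delta$ with $m(e_{g,x})-1$ two-cells, boundary $\nf{g}\,(\lbl(e_1)\cdots\lbl(e_t))\,\nf{gx}^{-1}$, and still the property that every vertex admits a normal-form path from the basepoint. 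Cutting $\widehat\Delta$ along the normal-form paths to the $t-1$ interior vertices between consecutive $e_i$ decomposes it into \tria\ diagrams $\widehat\Delta_1,\dots,\widehat\Delta_t$, where $\widehat\Delta_i$ has boundary $\nf{h_i}\,y_i\,\nf{h_iy_i}^{-1}$, is again \nformed, and has at most $m(e_{g,x})-1$ two-cells. Since $\widehat\Delta_i$ is an \nformed\ diagram for the edge $e_{h_i,y_i}$, minimality gives $m(e_{h_i,y_i})\le(\text{number of }2\text{-cells of }\widehat\Delta_i)<m(e_{g,x})$, and in particular $m(e_{h,y})<m(e_{g,x})$. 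Thus $e\mapsto m(e)$ embeds $<_\ff$ into $(\N,<)$, so $<_\ff$ is well-founded; hence $\ff$ is a bounded flow function on $\tree_\nfset$ and $G$ is stackable with respect to $\nfset$.

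\noindent\emph{Main obstacle.} The step needing the most care is the cutting in (F2r): one must verify that $\sigma$ meets the outer boundary of $\Delta'_{\nf{g},x}$ only along $p_x$, that the normal-form cut paths can be chosen pairwise non-crossing, and that after cutting each $\widehat\Delta_i$ retains the \nformed\ property (the normal-form path to each of its vertices must stay inside $\widehat\Delta_i$). I expect minimality of $\Delta'_{\nf{g},x}$ to exclude the degenerate configurations, and an innermost-path argument --- peeling off one cut path at a time and using prefix-closedness of $\nfset$ --- to establish the non-crossing and the persistence of the \nformed\ property. One could instead factor (iv)$\Rightarrow$(i) through (iii) and (ii), extracting \ft\ diagrams by induction on the number of $2$-cells via the same cutting and then upgrading to a stackable system by replacing recursive subdiagrams with globally fixed minimal ones of the same boundary; the cutting is unavoidable either way.
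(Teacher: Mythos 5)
Your proof is correct in outline and rests on the same key decomposition as the paper's, but you organize the implications differently: the paper proves the three reverse implications ii)$\Rightarrow$i), iii)$\Rightarrow$ii), iv)$\Rightarrow$iii) separately --- extracting \ft\ diagrams from \nformed\ ones by induction on the number of 2-cells, then assembling a stackable system by induction on minimal \ft\ area, then reading off the flow function with the area of the chosen diagrams as the well-founded rank --- whereas you collapse all of this into a single direct implication iv)$\Rightarrow$i), defining $\ff(e_{g,x})$ from the $2$-cell of a minimal \nformed\ diagram adjacent to $p_x$ and using the minimal \nformed\ area $m$ itself as the rank function. Your route is somewhat more economical (it never needs to produce \ft\ diagrams or a stackable system in the reverse direction), and the inequality $m(e_{h,y})<m(e_{g,x})$ does follow from your cutting for exactly the reason you give, since each piece $\widehat\Delta_i$ is an \nformed\ triangular diagram for its own edge with strictly fewer $2$-cells. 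Be aware, though, that what you defer to the ``main obstacle'' paragraph is where essentially all of the paper's effort is spent: one must first show that in a minimal \nformed\ triangular diagram the union of all normal-form paths from the basepoint is a tree --- the paper does this by supposing two such paths diverge and re-merge, using prefix-closure and uniqueness of normal forms to see that the diverging arcs carry the same label, and then excising the enclosed subdiagram via the Jordan Curve Theorem to contradict minimality of the cell count --- and only after that do the cut paths become pairwise non-crossing, the loops $r_ie_is_i^{-1}$ simple, and the pieces $\widehat\Delta_i$ again \nformed\ (every normal-form path to a vertex of $\widehat\Delta_i$ stays inside $\widehat\Delta_i$ precisely because the normal-form paths form a tree). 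Your proposed tools (minimality, prefix-closure, a Jordan-curve/innermost argument) are the right ones, so this is a matter of writing out the details rather than a gap in the strategy.
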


\begin{proof} The fact that i) implies ii) follows from the discussion above,
and the implications ii) $\Rightarrow$ iii) 
and iii) $\Rightarrow$ iv) are immediate.  We prove the 
reverse implications in the same order. 

Let $\Gamma$ denote the Cayley graph of $G$ with
respect to the generating set $X$, and let
$\tree_\nfset$ be the maximal tree in $\Gamma$ 
traversed by paths starting at the identity vertex
and labeled by words in $\nfset$.

First, assume ii) holds, and let $\mathcal{S}$ be the
stackable system.  
If $e_{g,x}$  is in
the tree $\tree_\nfset$
in $\Gamma$, let  $\ff(e_{g,x}) := e_{g,x}$.
On the other hand, if $e_{g,x}$ does not lie in
the tree $\tree_\nfset$
in $\Gamma$ and $\Delta_{g,x}$ is 
the associated van Kampen diagram in $\mathcal{S}$, let $\ff(e_{g,x})$
be the path in $\Gamma$ labeled by
the word $\phi(e_{g,x})$ in $X^*$ such that
$\phi(e_{g,x})x^{-1}$ labels the isolated cell of $\Delta_{g,x}$. Note that this 
implies that $\phi(e_{g,x})x^{-1}$ is 
a relator, and thus the possible lengths of the words $\phi(e_{g,x})$ 
are bounded. Now, we define a map $\nu$ from the set of directed
edges in $\Gamma \setminus \tree_\nfset$ to $\Z^+$ by 
$\nu(e_{g,x}):=\text{ area of }\Delta_{g,x}$.  Use the function
$\nu$ and the usual ordering on $\Z$ to order the set of recursive edges;
this gives a well-founded ordering on $\vec E$
such that $e' <_\ff e$ implies $\nu(e') < \nu(e)$.
Thus the group is stackable by Definition~\ref{def:autostackable},
and i) holds.

Next assume that iii) holds. For each directed edge $e_{g,x}$ 
of the Cayley graph, let $n(e_{g,x})$ be the smallest area 
of a \ft\  van Kampen diagram with boundary $\nf{g}x\nf{gx}^{-1}$.
We define a stackable system $\mathcal{S}$ of \ft\ van Kampen
diagrams by induction
on $n(e_{g,x})$. To start, let $\mathcal{S}$ be the empty set.
Note that $n(e_{g,x})=0$ if and only if 
$e_{g,x}$ is in $\tree_\nfset$; in this case add
a degenerate diagram $\Delta(e_{g,x})$ to $\mathcal{S}$.
Similarly, $n(e_{g,x})=1$ if and only if
$e_{g,x}$ is not in $\tree_\nfset$ and there is a minimal
van Kampen diagram for $\nf{g}x\nf{gx}^{-1}$;
place a choice of such a diagram in $\mathcal{S}$.
Suppose now that $n(e_{g,x})>1$, and we have
diagrams $\Delta_{g,x}$ in $\mathcal(S)$ corresponding to
 all directed edges with lower value for the function $n$.
Let $\Delta'$ be a \ft\  van Kampen diagram
for $e_{g,x}$ with $n(e_{g,x})$ 2-cells.
Using Definition~\ref{fullytriangular}~iii), 
$\Delta'$ is the disjoint union of a 2-cell $\sigma$
containing $p_x$ and \ft\ diagrams $\Delta_i$,
with certain gluings.  The isolated edge $e_i$ associated
with $\Delta_i$ must satisfy $n(e_i)<n(e_{g,x})$.
For each $i$ we replace the subdiagram $\Delta_i$ of $\Delta'$ by
the diagram in $\mathcal{S}$ with the same boundary label,
to obtain a diagram $\Delta_{g,x}$; add this diagram to $\mathcal{S}$.
Then $\mathcal{S}$ is a stackable system,
completing the proof of ii).

Finally, assume that iv) holds. 
Then for each $g \in G$ and $x \in X$, 
there is a \nformed\ van Kampen diagram
with boundary label $\nf{g}x\nf{gx}^{-1}$;
from among all such diagrams,
let $\Delta_{\nf{g},x}'$ be a
diagram with the least possible number of 2-cells and
let $\alpha(g,x)$ denote this number.
We show that for each $g \in G$ and $x \in X$
there is a \ft\ van Kampen diagram $\Delta_{g,x}$
with boundary label $\nf{g}x\nf{gx}^{-1}$
by induction on $\alpha(g,x)$.

Suppose that $\alpha(g,x)=0$.  Then the
diagram $\Delta_{g,x}'$ is degenerate,
and hence \ft.  In this case we can
take $\Delta_{g,x}:=\Delta_{g,x}'$.

Suppose next that $n:=\alpha(g,x)>0$, and that
for all $g' \in G$ and $x' \in X$
with $\alpha(g',x')<n$ there is a \ft\ diagram
bounded by $\nf{g'}x'\nf{g'x'}^{-1}$.
Let $\Delta':=\Delta_{\nf{g},x}'$ 
and let $*$ be the basepoint of $\Delta'$.

Suppose that there is a word $w \in \nfset$
that labels two paths $p,p'$ in $\Delta'$
that start at $*$ and suppose that $p$ and $p'$
can be factored
as $p=p_1p_2p_3$ and $p'=p_1p''$
such that $p_2$ is a nonempty
edge path whose intersection with the
path $p'$ consists exactly of the initial 
vertex $i(p_2)$ and terminal vertex $t(p_2)$
of $p_2$.  
Since normal forms in $\nfset$ label simple
paths in the tree $\tree_\nfset$, they must
also label simple paths in any van Kampen diagram;
hence $i(p_2) \neq t(p_2)$.
Now we can factor the path $p'=p_1p_2'p_3'$
such that $p_2'$ is another  path in $\Delta'$
from $i(p_2)$ to $t(p_2)$.
Moreover we can write $w=w_1w_2w_3=w_1w_2'w_3'$
such that for each $i$ the word $w_i$
labels the path $p_i$ and $w_i'$ labels $p_i'$.
The images of the paths $p_1p_2$ and $p_1p_2'$
under the map $\pi_{\Delta'}$ end at the same vertex
in the Cayley graph, and so prefix closure
(and uniqueness) of the normal form set imply
that $w_2=w_2'$.
A similar argument shows that the
path $p_2'$ cannot intersect the path $p$
except at the common endpoints of $p_2$ and $p_2'$.
Hence the path $p_2p_2'^{-1}$ is a simple
loop in $\Delta'$.  
By the Jordan Curve Theorem,
this loop separates the diagram $\Delta'$ into 
two subsets.
We remove the subdiagram of $\Delta'$ contained
inside this loop, and glue the two simple
paths $p_2$ and $p_2'$; this results in a new
van Kampen diagram $\Delta''$
with the same basepoint and boundary.
Moreover, the diagram $\Delta''$ is \nformed,
and contains fewer cells than $\Delta'$;
this contradicts our choice of 
$\Delta'=\Delta_{g,x}'$ as a \nformed\ triangular
van Kampen diagram with minimal number of 2-cells.
Hence the set of edges that lie along paths in $\Delta'$
starting at $*$ and labeled by elements of $\nfset$
must form a maximal tree, 
since a pair of such paths cannot have prefixes
that diverge and then merge.


If the edge $e_{g,x}$ lies in the tree $\tree_\nfset$,
then either $\nf{gx}=\nf{g}x$ or $\nf{g}=\nf{gx}x^{-1}$.
There is a degenerate, and
hence \ft, van Kampen diagram $\Delta_{g,x}$ (= $\Delta'$)
consisting of a line segment labeled $\nf{gx}$ or $\nf{g}$,
respectively, in this case.

On the other hand, suppose that $e_{g,x}$ is not in $\tree_\nfset$.
Note that $\Delta'$ must contain a 2-cell $\sigma$
with the isolated edge $p_x$ in its boundary,
since $p_x$ is the only directed edge in the path
along $\bo\Delta'$
mapped by $\pi_{\Delta'}$ to an edge outside of $\tree_\nfset$.
If $\Delta'$ contains only one 2-cell, then 
$\Delta'$ is minimal, and hence \ft.
Suppose that $\Delta'$ contains more than
one 2-cell.
Let $v_0,v_1,...,v_t$ be the successive
vertices, and
$e_1,...e_t$ the successive edges, of the
path in $\bo\sigma \setminus p_x$ from the
initial vertex $v_0$ to the terminal vertex $v_t$ of $p_x$.
For each $0 \le i \le t$, there is a unique path $p_i$ 
from the basepoint $*$ to $v_i$
that is labeled by a word in $\nfset$.
For each $0 \le i < t$, the 
concatenated path $l_i := p_{i}e_ip_{i+1}^{-1}$
is a loop in $\Delta'$.
Let $q_i$ be the maximal common prefix of 
the pair of paths $p_i,p_{i+1}$; that
is, $p_i=q_i r_i$ and $p_{i+1}=q_i s_i$.
If $q_i$ equals one of the paths
$p_1,p_{i+1}$, then either $q_i=p_i$,
$r_i$ is a constant path, and $s_i=e_i$,
or $q_i=p_{i+1}$, $r_i=e_i^{-1}$, and $s_i$
is constant; in both cases, the loop $l_i$
follows a line segment in $\Delta'$
and returns along the same segment back to $*$,
and we let $\Delta_i$ be the degenerate
van Kampen diagram given by this line segment.
On the other hand, if $q_i$ is a proper
subpath of both $p_i$ and $p_{i+1}$, then
the  fact that normal forms from $*$
label paths in a tree shows that
the path $r_ie_is_i^{-1}$ is a simple loop in $\Delta'$.
Let $\tilde \Delta_i$ denote the 2-complex inside
this loop (including the bounding loop),
and let $\Delta_i'$ be the subdiagram 
$\Delta_i' := q_i\tilde \Delta_i$ of $\Delta'$
with the same basepoint $*$.
Again applying the fact that
normal forms label paths from $*$ that lie in a tree,
for each vertex $v$ of $\Delta_i'$, the
path in $\Delta'$ from $*$ to $v$ must lie in $\Delta_i'$;
hence $\Delta_i'$ is \nformed\ triangular.
Since the number of 2-cells in $\Delta_i'$
is at most $n-1$, by our inductive assumption
there is a \ft\ van Kampen diagram
$\Delta_i$ with the same boundary label as $\Delta_i'$.

Now the diagram $\Delta_{g,x}$
built from the disjoint union of the $\Delta_i$  and $\sigma$,
glued along the normal form paths $p_i$ and the edges $e_i$,
is a \ft\ van Kampen diagram with boundary label
$\nf{g}x\nf{gx}^{-1}$, and therefore iii) holds.
\end{proof}


\section{HNN extensions and stackability}\label{sec:hnn}


Throughout this section, let
$G=H\ast_\atob$ be the 
HNN-extension of the group $H$ with isomorphism $\atob:A\to B$ between
subgroups $A$ and $B$  and stable letter $s$. 
Assume that $H$ is finitely generated and let
$H = \langle Y \mid R_H \rangle$ be a presentation for $H$, where
$Y$ is a finite inverse closed generating system for $H$. 
Let $X:=\{s,s^{-1}\}\cup Y$,
and let $\rep:X^* \ra G$ be the canonical surjection.
Then $G$ has the presentation
\begin{equation}\label{eqn:hnn}
G = \langle X \mid R_H, \{sas^{-1}=\atob(a) \mid a \in A\}\rangle.
\end{equation}

\begin{notation}\label{britton}
{\rm   
Let $\nfset_H$ be a set of normal forms for $H$ over $Y$. 
Let $\nfset_{H/A}$ be a  
subset of $\nfset_H$ satisfying the properties that the composition 
$\nfset_{H/A} \stackrel{\rho}{\ra} H \ra H/A$ is a bijection
and $\emptyword \in \nfset_{H/A}$,
and similarly let 
$\nfset_{H/B}\subseteq\nfset_H$ be a
set of normal forms for a set of coset representatives for $B$ in $H$
that contains $\emptyword$. 
The {\em Britton set of normal forms}~\cite[p.~181]{lyndonschupp} 
for $G$ is 
\begin{align*}
\nfset_G &:=
\{
 h_1s^{\epsilon_1}h_2s^{\epsilon_2}\ldots h_ns^{\epsilon_n}h \mid 
  n\geq 0,~h\in\nfset_H~\text{ and } \epsilon_i=\pm 1\text{ for }1\leq i\leq n;\\
&~\hspace{1.75in} 
    \text{ if }\epsilon_i=+1\text{ then }h_i\in \nfset_{H/B} \\
&~\hspace{1.75in} 
    \text{ and if }\epsilon_i=-1\text{ then }h_i\in \nfset_{H/A}; \\
&~\hspace{1.75in}
    \text{and if }\epsilon_{i}=-\epsilon_{i-1}\text{ then }h_i \neq \emptyword
\};
\end{align*}
that is, 
$\nfset_G = (\nfset_{H/A} s^{-1} \cup \nfset_{H/B} s)^* \nfset_H
\setminus \cup_{\epsilon \in \{\pm 1\}} X^* s^\epsilon s^{-\epsilon}X^*$.
Given such a word 
$w=h_1s^{\epsilon_1}h_2s^{\epsilon_2}\ldots h_ns^{\epsilon_n}h$ in $\nfset_G$,
we define $\tail(w):=h_1s^{\epsilon_1}h_2s^{\epsilon_2}\ldots h_ns^{\epsilon_n}$
and $\head(w):=h$; we refer to these as the {\em tail} and {\em head}
of $w$, respectively. 
}
\end{notation}


The following is immediate from the definition of
Britton's normal forms; we will apply this in the proof 
of Theorem~\ref{thm:HNN} below.

\begin{lemma}\label{tail} 
Let $w=h_1s^{\epsilon_1}h_2s^{\epsilon_2}\ldots h_ns^{\epsilon_n}h\in\nfset$ 
and let $\tau\in\nfset$ be a tail. If either $n=0$, $h_1\neq\emptyword$,
or $h_1=\emptyword$ and $s^{\epsilon_1}$ equals the last letter of $\tau$, 
we have $\tau w\in\nfset$.
\end{lemma}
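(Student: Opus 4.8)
The plan is to verify directly that $\tau w$ satisfies the two conditions encoded by the formula $\nfset_G=(\nfset_{H/A}s^{-1}\cup\nfset_{H/B}s)^*\nfset_H\setminus\bigcup_{\epsilon\in\{\pm1\}}X^*s^\epsilon s^{-\epsilon}X^*$ of Notation~\ref{britton}: namely (1) $\tau w\in(\nfset_{H/A}s^{-1}\cup\nfset_{H/B}s)^*\nfset_H$, and (2) $\tau w$ contains no subword $s^\epsilon s^{-\epsilon}$ with $\epsilon\in\{\pm1\}$. If $\tau=\emptyword$ the conclusion is immediate, since then $\tau w=w\in\nfset$. Otherwise, being a tail, $\tau=g_1s^{\delta_1}\cdots g_ms^{\delta_m}$ with $m\geq1$ and each block $g_is^{\delta_i}$ lying in $\nfset_{H/A}s^{-1}$ or $\nfset_{H/B}s$; in particular $\tau$ ends in the letter $s^{\delta_m}$, and $\tau$ is a prefix of some word in $\nfset_G$.

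Condition (1) is automatic: since $w\in\nfset_G$ we already have $w\in(\nfset_{H/A}s^{-1}\cup\nfset_{H/B}s)^*\nfset_H$, and prepending $\tau\in(\nfset_{H/A}s^{-1}\cup\nfset_{H/B}s)^*$ stays in this set, leaving the $\nfset_H$-factor of $w$ untouched. The one point worth noting is that the coset-representative constraint on any given $H$-syllable (of $w$, or of $\tau$) depends only on the sign of the $s$-letter immediately following it, and concatenating $\tau$ with $w$ alters none of those.

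For condition (2), observe that $\tau$ (as a prefix of an element of $\nfset_G$) and $w$ each contain no subword $s^\epsilon s^{-\epsilon}$, so such a subword of $\tau w$ could only be formed by the last letter $s^{\delta_m}$ of $\tau$ together with the first letter of $w$. The hypothesis is exactly what rules this out: if $n=0$ or $h_1\neq\emptyword$, then the first letter of $w$ lies in $Y$ (or $w=\emptyword$), so no $s^{\pm1}$ sits at the seam on the $w$-side; and if $h_1=\emptyword$ while $s^{\epsilon_1}$ equals the last letter $s^{\delta_m}$ of $\tau$, then the seam reads $s^{\delta_m}s^{\delta_m}$, two $s$-letters of equal sign, which is not of the form $s^\epsilon s^{-\epsilon}$. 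Hence (2) holds, and therefore $\tau w\in\nfset$.

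There is no real obstacle here; the argument is bookkeeping with Britton normal forms. The only genuinely delicate point is the third case of the hypothesis: prepending $\tau$ can a priori create a pinch at the seam between $\tau$ and $w$, and it is precisely the assumption that $s^{\epsilon_1}$ matches the last letter of $\tau$ that forces the two $s$-letters meeting there to have the same sign. I would also remark that the clause ``$h_i\neq\emptyword$ whenever $\epsilon_i=-\epsilon_{i-1}$'' appearing in the syllable-form of the definition of $\nfset_G$ is automatically implied by the literal no-pinch condition, so verifying (1) and (2) above is enough.
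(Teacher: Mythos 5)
Your proof is correct. The paper states this lemma without proof (it is declared ``immediate from the definition of Britton's normal forms''), and your verification --- membership of $\tau w$ in $(\nfset_{H/A}s^{-1}\cup\nfset_{H/B}s)^*\nfset_H$ being automatic, plus ruling out a pinch $s^\epsilon s^{-\epsilon}$ at the seam, which is exactly where the three-part hypothesis is used --- is precisely the routine bookkeeping the authors are eliding.
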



Before proceeding to Theorem~\ref{thm:HNN},
we first need a result about changing generating
sets for stackable and autostackable groups.

\begin{proposition}\label{prop:stkbladdgens}
Let $H$ be a stackable group with respect to
an inverse-closed generating set $Y$, and let 
$Z$ be an inverse-closed subset of $H$.  Then
$H$ is also stackable with respect to the generating
set $Y \cup Z$.  Moreover, if $H$ is autostackable
over $Y$, then $H$ is also autostackable over $Y \cup Z$.
\end{proposition}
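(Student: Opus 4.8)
The plan is to keep the tree, the normal forms, and the flow function on old edges untouched, and to reroute each new generator through a fixed word in the old generators. Fix a bounded flow function $\ff$ for $H$ on a maximal tree $\tree$ in $\ga(H,Y)$, with associated normal‑form set $\nfset_\tree\subseteq Y^*$ and stacking map $\sff$. Since $\ga(H,Y)$ and $\ga':=\ga(H,Y\cup Z)$ have the same vertex set, with $\ga(H,Y)$ a subgraph of $\ga'$, the tree $\tree$ is still a maximal tree of $\ga'$, its geodesic normal forms are still exactly $\nfset_\tree$ (all words over $Y$), and $\nf{g}$ has the same meaning in both settings. For each $z\in Z$ fix once and for all a word $w_z\in Y^+$ with $w_z=_H z$; such a word exists because $Y$ generates $H$, and it is nonempty because no generator represents the identity. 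Then define $\ff'$ on the directed edges of $\ga'$ by $\ff'(e_{g,y}):=\ff(e_{g,y})$ for $y\in Y$ (a path in $\ga(H,Y)\subseteq\ga'$) and $\ff'(e_{g,z}):=$ the path in $\ga'$ starting at $g$ with label $w_z$, for $z\in Z$ (which does end at $gz$).

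First I would verify that $\ff'$ is a bounded flow function on $\tree$. Property (F1) is immediate; (F2d) holds because the edges of $\tree$ are all labelled by elements of $Y$, on which $\ff'$ agrees with $\ff$, and (F2d) holds for $\ff$. For well‑foundedness (F2r): every path $\ff'(e)$ is labelled by a word over $Y$, so $e'<_{\ff'}e$ forces $e'$ to be a $Y$‑edge; hence in any $<_{\ff'}$‑descending chain a $Z$‑edge can occur only as the first term, and the remainder is a $<_\ff$‑descending chain among $Y$‑edges, which terminates by the hypothesis on $\ff$. Boundedness follows from the bound for $\ff$ together with $\max_{z\in Z}l(w_z)$ (here $Z$ is finite, since generating sets in this setting are finite). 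This already shows that $H$ is stackable over $Y\cup Z$.

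For the autostackable statement, suppose $\graph(\sff)$ is \syreg. The stacking map $\sff'$ of $\ff'$ satisfies $\sff'(\nf{g},y)=\sff(\nf{g},y)$ for $y\in Y$, and $\sff'(\nf{g},z)=w_z$ for all $g$ whenever $z\in Z$, so
$$
\graph(\sff') = \graph(\sff)\;\cup\;\bigcup_{z\in Z}\bigl(\nfset_\tree\times\{z\}\times\{w_z\}\bigr).
$$
The set $\graph(\sff)$ is \syreg\ by hypothesis, and remains so over the larger alphabet $Y\cup Z$. For the remaining pieces I would use that $\nfset_\tree$ is a regular language: an autostackable structure is the same as a \syreg\ bounded complete prefix‑rewriting system whose irreducible words are exactly the elements of $\nfset_\tree$, so $\nfset_\tree$ is the complement of $L\cdot(Y\cup Z)^*$, where $L$, the set of left‑hand sides of the rules, is the regular projection onto the first coordinate of a \syreg\ relation. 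Hence each $\nfset_\tree\times\{z\}\times\{w_z\}$ is a product of regular (in particular \syreg) languages, so it is \syreg, and a finite union of \syreg\ languages is \syreg. Therefore $\graph(\sff')$ is \syreg\ and $H$ is autostackable over $Y\cup Z$.

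The two points that need care are the well‑foundedness argument in (F2r) — making sure that rerouting new generators through old ones creates no infinite descending chain — and the observation that $\nfset_\tree$ is regular in the autostackable case; the rest is routine checking against the definitions. (One could instead argue via the fully‑triangular characterization of Proposition~\ref{equivalence}, taking $\Delta'_{g,z}$ to be a single isolated $2$‑cell with boundary $z\,w_z^{-1}$ glued to the fully triangular diagrams furnished by Proposition~\ref{equivalence}(iii) for the successive $Y$‑edges along $w_z$, but the direct flow‑function construction seems more economical.)
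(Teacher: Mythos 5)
Your proposal is correct and follows essentially the same route as the paper: keep the tree, normal forms, and flow function on $Y$-edges, send each edge labelled by $z\in Z\setminus Y$ to the path labelled $w_z$, and observe that $\graph(\sff')=\graph(\sff)\cup\bigcup_{z}\nfset_\tree\times\{z\}\times\{w_z\}$ is \syreg\ by closure properties. The only cosmetic differences are that you spell out the well-foundedness check (the paper leaves it implicit) and obtain regularity of $\nfset_\tree$ via the prefix-rewriting characterization, whereas the paper gets it more directly as $\proj_1(\graph(\sff))$; also take care to define $\ff'$ on $Z\setminus Y$ rather than all of $Z$ to avoid clashing with the $Y$-edge case.
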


\begin{proof}
Let $\ga := \ga(H,Y)$ and $\ga':=\ga(H,Y \cup Z)$ be the
Cayley graphs for $H$ over $Y$ and $Y \cup Z$, respectively.
For each $z \in Z \setminus Y$, fix a word $w_z \in Y^*$ such that
$z =_H w_z$.

Let $\ff$ be a bounded flow function for $H$
over the generating set $Y$, with associated maximal
tree $\tree$ in $\ga$, normal form set $\nfset$, and
stacking function;
$\sff = \lbl \circ \ff \circ \path: \nfset \times Y \ra Y^*$.  
The tree $\tree$ is also a maximal tree
in $\ga'$, and the associated normal forms over
$Y \cup Z$ is the same set $\nfset$.
Define the function 
$\sff': \nfset \times Y \cup Z \ra (Y \cup Z)^*$
by $\sff'(w,y) := \sff(w,y)$ for all $w \in \nfset$
and $y \in Y$, and
$\sff'(w,z) := w_z$ for all $w \in \nfset$ and 
$z \in Z \setminus Y$.  Then $\sff'$
is a stacking function for $H$ over $Y \cup Z$,
and $\ff':= \path \circ \sff' \circ \lbl$ is
a bounded flow function for 
$H$ over $Y \cup Z$.

In the case that $H$ is autostackable over $Y$,
and $\graph(\sff)$ is synchronously regular,
we have
$$
\graph(\sff') = \graph(\sff) \bigcup
(\cup_{z \in Z \setminus Y} \nfset \times \{z\} \times \{w_z\}).
$$
Since the class of synchronously regular languages is
closed under 
projection on the first coordinate,
finite direct products, and finite unions, then
$\graph(\sff')$ is also synchronously regular,
and so $H$ is autostackable over $Y \cup Z$.
\end{proof}


\begin{theorem}\label{thm:HNN}
Let $H$ be a stackable group, 
let $A,B \leq H$ be finitely generated, and let $\atob:A\to B$ be
an isomorphism.  
Then the HNN extension $G=H\ast_\atob$ is also stackable. 
\end{theorem}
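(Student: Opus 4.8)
The plan is to use characterization (iv) of Proposition~\ref{equivalence}: to prove $G=H\ast_\atob$ is stackable with respect to the Britton normal form set $\nfset_G$, it suffices to produce, for every $w\in\nfset_G$ and every $x\in X=\{s,s^{-1}\}\cup Y$, a \nformed\ van Kampen diagram $\Delta'_{w,x}$ over the presentation~\eqref{eqn:hnn} with boundary $wx\nf{wx}^{-1}$. Here I should first arrange the hypotheses conveniently: by Proposition~\ref{prop:stkbladdgens} I may enlarge $Y$ so that it contains finite generating sets of $A$ and of $B$ (and their $\atob$-images), and I fix a bounded flow function, hence by Proposition~\ref{equivalence} a stackable system $\SS_H$ of \ft\ van Kampen diagrams for $H$ over $Y$, with prefix-closed normal form set $\nfset_H$. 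I also need the coset representative sets $\nfset_{H/A}$ and $\nfset_{H/B}$ inside $\nfset_H$ as in Notation~\ref{britton}.

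The construction of $\Delta'_{w,x}$ splits by cases on $x$. \emph{Case $x\in Y$:} then $wx=_G$ acts only on the head, $\head(w)\in\nfset_H$, and I can take a \ft\ diagram $\Delta_{\head(w),x}\in\SS_H$ for $H$ (which exists by stackability of $H$) and prepend the path labeled $\tail(w)$ to it — i.e.\ form $\tail(w)\cdot\Delta_{\head(w),x}$. Using Lemma~\ref{tail} together with the remark after Definition~\ref{def:stksys} (that prepending a word $v$ to a \nformed/\ft\ diagram keeps it \nformed/\ft\ provided $v\mu\in\nfset_G$ for every $\mu\in\nfv{\cdot}$), this yields a \nformed\ (indeed \ft) diagram, once I check $\tail(w)\mu\in\nfset_G$ for all relevant $\mu$; this is exactly the content of Lemma~\ref{tail} (with $n=0$ in the tail attached), so it goes through. \emph{Case $x=s^{\pm1}$:} here the new normal form $\nf{ws^{\pm1}}$ is obtained by a sequence of \emph{Britton reductions}. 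One must normalize $\head(w)$ into its coset form: write $\head(w)=_H k\cdot a$ with $k\in\nfset_{H/B}$ (resp.\ $\nfset_{H/A}$) and $a\in A$ (resp.\ $B$), which requires a van Kampen diagram over $H$ expressing $\head(w)=_H ka$; then apply the relator $sas^{-1}=\atob(a)$ (one 2-cell of~\eqref{eqn:hnn}), which may cancel an adjacent $s^{\mp1}$ in $\tail(w)$ and trigger a further reduction one level down, recursing on the number of stable letters $n$.

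The recursion should be organized as an induction, for instance on $n=$ (number of $s^{\pm1}$'s in $w$) with an inner induction supplied by Proposition~\ref{equivalence} applied to $H$ for the ``head-rewriting'' diagrams; at each stage I glue: (a) a diagram from $\SS_H$ that rewrites the current head into coset-representative-times-subgroup-element form, (b) a single pinch 2-cell from the relator $sas^{-1}=\atob(a)$, and (c) inductively a diagram handling the shorter word. The key point to verify is that the resulting diagram is \nformed: every vertex must be reachable from the basepoint by a path labeled by an element of $\nfset_G$. This follows because the $\SS_H$-pieces are themselves \nformed\ over $\nfset_H$, the tails prepended are admissible by Lemma~\ref{tail}, and the Britton form of $\nfset_G$ is precisely engineered so that concatenations $\tau\cdot(\text{head piece})$ stay normal. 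The main obstacle I anticipate is \emph{bookkeeping the gluing when a Britton reduction cancels a stable letter}: after pinching, the exposed subword $h_{i-1}s^{\epsilon_{i-1}}h_i'$ (with $h_i'$ the leftover subgroup element pushed across) may fail to be in coset-representative form, forcing another $H$-normalization; I need to ensure this descent terminates (it does: it strictly decreases $n$) and that the normal forms attached to interior vertices along the newly glued boundary all lie in $\nfset_G$, which again reduces to Lemma~\ref{tail} and prefix-closure of $\nfset_H$. Once all four cases produce \nformed\ diagrams, Proposition~\ref{equivalence}(iv)$\Rightarrow$(i) gives that $G$ is stackable, completing the proof.
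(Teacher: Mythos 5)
Your overall strategy matches the paper's: reduce to Proposition~\ref{equivalence} via the Britton normal forms of Notation~\ref{britton}, enlarge $Y$ using Proposition~\ref{prop:stkbladdgens} so that it contains finite generating sets $Z_A$ of $A$ and $Z_B=\atob(Z_A)$ of $B$, and handle $x\in Y$ exactly as you do, by prepending $\tail(w)$ to an $H$-diagram for $(\head(w),x)$ and invoking Lemma~\ref{tail}. That part is fine.

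The gap is in the case $x=s^{\pm1}$, which is the whole content of the theorem. First, you propose to realize $\head(w)=_Hka$ and then ``apply the relator $sas^{-1}=\atob(a)$ (one 2-cell)'' with $a$ an arbitrary subgroup element. That relator belongs to the infinite presentation~(\ref{eqn:hnn}) but not to any finite presentation, so the resulting diagram is not over a finite presentation: Proposition~\ref{equivalence} is stated for a finite presentation, and boundedness of the flow function would fail, since the word read off the isolated cell would have unbounded length. One must instead use only the finitely many relators $b^{-1}s\,\atob^{-1}(b)s^{-1}$ for $b\in Z_B$, pushing the subgroup element across $s$ one generator at a time via a strip of bounded cells. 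Second, your induction parameter is wrong: multiplying a Britton normal form by $s^{\pm1}$ generically \emph{increases} the number $n$ of stable letters, and the cancellation you flag as the main obstacle (``trigger a further reduction one level down'') occurs at most once, and only in the degenerate situation where the subgroup part of the head is trivial and $\tail(w)$ ends in the opposite stable letter — there is no cascade, so descent on $n$ carries no inductive content. The induction that actually closes the recursion is on the length of the shortlex word $v\in SL_B$ with $\head(w)=_Huv$, $u\in\nfset_{H/B}$: writing $v=v'b$ with $b\in Z_B$, the diagram for $(w,s)$ is assembled from already-constructed diagrams for $(w,b^{-1})$ and $(\nf{wb^{-1}s},\atob^{-1}(b))$ (the $x\in Y$ case), the diagram for $(\nf{wb^{-1}},s)$ (inductive hypothesis, its head having subgroup part $v'$), and one cell labeled $b^{-1}s\,\atob^{-1}(b)s^{-1}$. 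Without this (or an equivalent well-founded scheme), the verification that the interior vertices of your conjugation strip carry normal forms in $\nfset_G$ — which you defer to ``bookkeeping'' — cannot be carried out, so the argument as written does not go through.
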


\begin{proof} 
Suppose that $H$ is stackable 
with respect to
a normal form set $\nfset_H$ over an inverse-closed generating
set $Y$, and
let $\langle Y \mid R_H \rangle$ be
the finite stacking presentation 
associated to the bounded flow function for $H$ over $Y$.
Let $\nfset_{H/A}$,$\nfset_{H/B}$ be subsets of $\nfset_H$
(each containing $\emptyword$)
representing transversals of these subgroups.
Using the proof of Proposition~\ref{prop:stkbladdgens},
by possibly extending the flow function and adding
relators (of the form $z=w_z$) to the presentation, 
we may assume that the generating
set $Y$ contains a subset $Z_A$
which is an inverse-closed generating set for $A$,
as well as the subset $Z_B:=\{\atob(a) \mid a \in Z_A\}$
of $H$, which generates $B$.  (Note that this
does not affect the normal form sets
$\nfset_H$, $\nfset_{H/A}$ and $\nfset_{H/B}$.)
Using Proposition~\ref{equivalence}, for every
word $w \in \nfset_H$ and $y \in Y$, we
have a fully $\nfset_H$-triangular van Kampen
diagram $\Delta^H_{w,y}$ over the stacking presentation for $H$.

Let $\nfset_G$ be the Britton normal form set 
of Notation~\ref{britton}.
Since the set $\nfset_H$ is prefix-closed, 
then the set
$\nfset_G = (\nfset_{H/A} s^{-1} \cup \nfset_{H/B} s)^* \nfset_H
\cap (X^* \setminus \cup_{\epsilon \in \{\pm 1\}} X^* s^\epsilon s^{-\epsilon}X^*)$
is also prefix-closed.
For all $w \in X^*$, let $\nf{w}$ denote
the normal form in $\nfset_G$ of the element
of $G$ represented by $w$.
Let $\ga:=\ga(G,X)$ be the Cayley graph for
$G$ over $X:=Y \cup \{s^{\pm 1} \}$, and
let $\tree$ be the maximal tree of $\ga$
corresponding to the set $\nfset_G$.

We will show that $G$ is stackable with respect to $\nfset_G$
by showing that over the finite presentation
in Equation~(\ref{eqn:hnn})  there is a 
\ftg\ van Kampen
diagram $\Delta_{w,x}$ with boundary label $wx\nf{wx}^{-1}$
for every normal form word $w \in \nfset_G$ and generator $x \in X$;
that is, we apply Proposition~\ref{equivalence}.  
We proceed
via several cases depending upon $w$ and $x$; in each case, we have that 
\ftg\ van Kampen diagrams have been 
constructed for the prior cases.
 
\medskip

\noindent{\em Case 1: Suppose that $x\in Y$.}
Let $\Delta^H_{\head(w),x}$ be the 
fully $\nfset_H$-triangular van Kampen diagram associated to $\head(w)$ and $x$.
By Lemma~\ref{tail}, for all $v \in \nfset_H$ 
we have $\tail(w)v \in \nfset_G$.  
Thus the diagram $\tail(w)\Delta^H_{\head(w),x}$
is a fully $\nfset_G$-triangular  van Kampen diagram in $G$,
with boundary label $wx\nf{wx}^{-1}$. 

\medskip

\noindent{\em Case 2: Suppose that $x=s^{\pm 1}$.} 
The two cases $x=s$ and 
$x=s^{-1}$ are analogous, so we assume that 
$x=s$ and leave the case when $x=s^{-1}$ to the reader. 

Let $SL_B \subset Z_B^*$ denote the set of shortlex normal
forms for $B$ over the generating set $Z_B$ with respect
to some total ordering of $Z_B$.
The element of $H$ represented by $\head(w)$ can be
written $\head(w)=_H uv$ for a unique $u \in \nfset_{H/B}$
and $v \in SL_B$. 
 
We proceed by induction on the length of $v$.
If $v = \emptyword$, then either $wx=ws \in \nfset_G$
or else $w$ ends with $s^{-1}$; in either case,
there is a degenerate
van Kampen diagram $\Delta_{w,x}$
with boundary word $ws\nf{ws}^{-1}$.

Next suppose that $l(v)>0$ and that
we have a fully $\nfset_G$-triangular van Kampen
diagram for $\nf{\tail(w)uv'}s\nf{\tail(w)uv's}^{-1}$ 
for all $v' \in SL_B$ with
$l(v')<l(v)$.
Write $v=v'b$ with $v' \in SL_B$ and $b \in Z_B$.
Let $a \in Z_A$ be the letter 
satisfying $\atob(a)=b$.


The normal form $\nf{wb^{-1}}$ is
$w' := \tail(w)\nf{uv'}$; then since
$\head(w') =_H uv'$ and $l(v')<l(v)$, 
our inductive assumption implies that there is a 
fully $\nfset_G$-triangular van Kampen diagram
$\Delta_2$ 
with boundary word $\nf{wb^{-1}}s\nf{wb^{-1}s}^{-1}$.

\begin{figure}\centering
\caption{The diagram $\Delta_{w,s}$.}
\begin{tikzpicture}[scale=2]
\draw [very thick,decoration={markings, mark=at position .8 with {\arrow{>}}}, postaction={decorate}] (0,0) -- node[fill=white, sloped, pos=.5]
  {{\small $\nf{wb^{-1}sa}$}} (20:3cm);
 \draw [very thick,decoration={markings, mark=at position .9 with {\arrow{>}}}, postaction={decorate}] (0,0) -- node[fill=white, sloped, pos=.6] {{\small $\nf{wb^{-1}s}$}}   (6:2.4cm);
\draw [very thick,decoration={markings, mark=at position .9 with {\arrow{>}}}, postaction={decorate}] (0,0) -- node[fill=white, sloped,  pos=.6] {{\small $\nf{wb^{-1}}$}}    (-6:2.4cm);
 \draw [very thick,decoration={markings, mark=at position .8 with {\arrow{>}}}, postaction={decorate}] (0,0) -- node[fill=white, sloped, pos=.5] {{\small $w$}}  (-20:3cm);
 \draw [very thick,decoration={markings, mark=at position .7 with {\arrow{>}}}, postaction={decorate}]  (6:2.4cm) -- node[fill=white, pos=.4] {{\small $a$}}  (20:3cm);
 \draw [very thick,decoration={markings, mark=at position .8 with {\arrow{>}}}, postaction={decorate}] (-6:2.4cm) -- node[fill=white, pos=.4] {{\small $s$}}  (6:2.4cm);
 \draw [very thick,decoration={markings, mark=at position .8 with {\arrow{>}}}, postaction={decorate}] (-20:3cm) -- node[fill=white, pos=.5] {{\small $b^{-1}$}} (-6:2.4cm);  
 \draw [very thick,decoration={markings, mark=at position .8 with {\arrow{>}}}, postaction={decorate}] (-20:3cm)-- node[fill=white, pos=.5] {{\small $s$}} (20:3cm);
\node at (2.1,0) {$\Delta_2$};
\node at (2,-.5) {$\Delta_1$};
\node at (2,.5) {$\Delta_3$};
\end{tikzpicture}
\end{figure}
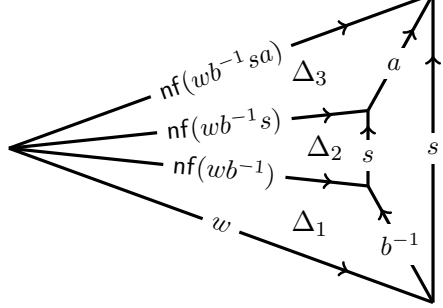

Applying Case 1, there are
fully $\nfset_G$-triangular diagrams
$\Delta_1=\tail(w)\Delta^H_{\head(w),b^{-1}}$
and 
$\Delta_3=\tail(wb^{-1}s)\Delta^H_{\head(wb^{-1}s),a}$ (see Figure 1). Gluing the diagrams $\Delta_1,\Delta_2$ along
their (simple) boundary paths $\nf{wb^{-1}}$,
and gluing the resulting diagram with $\Delta_3$
along their $\nf{wb^{-1}s}$ paths, results in
the subdiagram $\tilde \Delta$ of Figure 1
with boundary label $wb^{-1}sa$.
We glue a single 2-cell with boundary label
$b^{-1}sas^{-1}$ to $\tilde \Delta$ along their
$b^{-1}sa$ boundary paths; this yields a
fully $\nfset_G$-triangular van Kampen diagram 
$\Delta_{w,x}$ with boundary label $wx\nf{wx}^{-1}$.
\end{proof}

\begin{remark}\label{rmk:hnn}
{\rm 
We record here, for later use, the result of Theorem~\ref{thm:HNN}
in terms of stacking maps.
Suppose that $\sff_H:\nfset_H \times Y \ra Y^*$ is the
stacking map associated to the bounded flow
function on $H$ in Theorem~\ref{thm:HNN}.
For each
word $w \in \nfset_G$
define $\trans_A(w)$ and $\subg_A(w)$ to 
be the unique elements of the transversal
$\nfset_{H/A}$
and subgroup shortlex representatives $SL_{A}$,
respectively,
such that $\head(w) =_H \trans_A(w)\subg_A(w)$,
and let $\last(\subg_A(w))$ denote the
last letter (in $Z_{A}$) of the word $\subg_A(w)$.
Similarly define $\trans_B(w)$, $\subg_B(w)$,
and $\last(\subg_B(w))$.
The stacking map for $G$ over $X$ is the function
$\sff_G: \nfset_G \times X \ra X^*$ defined
for
all $w \in \nfset_G$ and $x \in Y$ by:
$$
\sff_G(w,x):=
\begin{cases}
\sff_H(\head(w),x) 
    & \text{if } x \in Y \\
x 
    & \text{if } x=s \text{ and } \head(w)=\trans_B(w)  \\
    & \text{or } x=s^{-1} \text{ and } \head(w)=\trans_A(w) \\
\last(\subg_B(w))^{-1} s \atob^{-1}(\last(\subg_B(w)))
    & \text{if } x=s \text{ and } \head(w) \neq \trans_B(w)  \\
\last(\subg_A(w))^{-1} s^{-1} \atob(\last(\subg_A(w)))
    & \text{if } x=s^{-1} \text{ and } \head(w) \neq \trans_A(w).  \\
\end{cases}
$$
}
\end{remark}

In the case that the graph of the stacking
map $\sff_H$ for $H$ is decidable or synchronously regular,
the proof of
Theorem~\ref{thm:HNN} does not 
show that the same must hold for the graph of
the stacking function $\sff_G$.  However, in
many special cases, this does hold.  

\begin{corollary}\label{cor:autohnn}
Let $H$ be an autostackable [respectively, algorithmically
stackable] group 
over an inverse-closed generating set $Y$. 
Let $A \leq H$ be 
generated by a finite inverse-closed set $Z \subseteq Y$
with shortlex normal form set $SL_A$
(with respect to some total ordering of $Z$),
and let $\atob:A\to H$ be a monomorphism
with $\atob(Z) \subseteq Y$.  
Suppose further that there are 
regular [respectively, decidable]
subsets $\nfset_{H/A},\nfset_{H/\atob(A)} \subseteq \nfset_H$,
each containing $\emptyword$, representing
transversals of these subgroups, and that
for each $z \in Z$ and $\tilde z \in \atob(Z)$, the sets 
\begin{eqnarray*}
L_z & := & \{w \in \nfset_H \mid w=_H \trans_A(w)\subg_A(w)
\text{ for some } \\
&& \ \hspace{.6in} \trans_A(w) \in \nfset_{H/A}
\text{ and } \subg_A(w) \in SL_{A} \cap Z^*z\} \text{ and} \\
L_{\tilde z}' & := & \{w \in \nfset_H \mid 
    w=_H \trans_{\atob(A)}(w)\subg_{\atob(A)}(w)
\text{ for some } \\
&& \ \hspace{.6in} \trans_{\atob(A)}(w) \in \nfset_{H/\atob(A)}
\text{ and } \subg_{\atob(A)}(w) \in \atob(SL_{A}) \cap \atob(Z)^*\tilde z\}
\end{eqnarray*}
are also regular [respectively, decidable].
Then the HNN extension $G=H\ast_\atob$ is autostackable
[respectively, algorithmically stackable]. 
\end{corollary}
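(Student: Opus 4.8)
The plan is to build on the proof of Theorem~\ref{thm:HNN} and the explicit description of the stacking map $\sff_G$ given in Remark~\ref{rmk:hnn}, and to verify that under the hypotheses of the corollary the graph $\graph(\sff_G)$ is synchronously regular (respectively, decidable). Since $\atob$ is now a monomorphism into $H$ rather than an isomorphism onto $B$, I would first observe that we may take $B=\atob(A)$, which is generated by the finite inverse-closed set $\atob(Z)\subseteq Y$; so the construction in Theorem~\ref{thm:HNN} applies verbatim with $Z_A:=Z$ and $Z_B:=\atob(Z)$, and no additional generators need to be adjoined (so $Y$ is unchanged and $\nfset_H$ is the stacking normal form set we started with). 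The key point is that the piecewise formula for $\sff_G(w,x)$ in Remark~\ref{rmk:hnn} depends on $w$ only through $\head(w)$, the ``last subgroup letter'' $\last(\subg_B(w))$ or $\last(\subg_A(w))$, and membership of $\head(w)$ in the transversal — and each of these is controlled by the hypothesized languages $L_z,L_{\tilde z}'$ together with the regular/decidable transversal sets.

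The main steps, in order, are as follows. (1) Describe $\nfset_G$ as a language: using the identity $\nfset_G=(\nfset_{H/A}s^{-1}\cup\nfset_{H/B}s)^*\nfset_H\setminus\bigcup_\epsilon X^*s^\epsilon s^{-\epsilon}X^*$ from Notation~\ref{britton}, and the fact that $\nfset_H$ is regular [resp.\ decidable] (being the irreducible-word set of the autostackable [alg.\ stackable] structure on $H$) and $\nfset_{H/A},\nfset_{H/B}$ are regular [decidable] by hypothesis, $\nfset_G$ is regular [decidable]: regular languages are closed under concatenation, Kleene star, and complement, and decidable languages are closed under the same operations. (2) Observe that for a word $w=h_1s^{\epsilon_1}\cdots h_ns^{\epsilon_n}h\in\nfset_G$ the head $h=\head(w)$ and tail $\tail(w)$ are extracted by a synchronous/computable process — $\head(w)$ is the maximal suffix of $w$ lying in $\nfset_H$, and this is recognized synchronously because $\nfset_H$ is regular and contains no letter $s^{\pm1}$ — so a synchronous two-tape (resp.\ Turing) machine reading $(w,x)$ can isolate $\head(w)$. (3) For the clause $x\in Y$ we have $\sff_G(w,x)=\sff_H(\head(w),x)$, so the corresponding slice of $\graph(\sff_G)$ is obtained from $\graph(\sff_H)$ by the substitution $w\mapsto\head(w)$; since $\graph(\sff_H)$ is synchronously regular [decidable] and the map $w\mapsto\head(w)$ is a synchronous [computable] operation defined on the regular [decidable] set $\nfset_G$, this slice is synchronously regular [decidable], using closure of \syreg\ languages under intersection, Cartesian product, and projection. (4) For $x=s$ the formula splits according to whether $\head(w)=\trans_B(w)$: the set of $w$ with $\head(w)=\trans_B(w)$ is $\{w\in\nfset_G\mid \head(w)\in\nfset_{H/B}\}$, which is regular [decidable]; and on the complementary set, $\last(\subg_B(w))$ is the unique $\tilde z\in\atob(Z)$ with $\head(w)\in L_{\tilde z}'$, so the output word $\tilde z^{-1}s\atob^{-1}(\tilde z)$ is determined by which of the finitely many regular [decidable] languages $\{w\in\nfset_G\mid\head(w)\in L_{\tilde z}'\}$ contains $w$. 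Hence $\graph(\sff_G)$ restricted to $x=s$ is a finite union of sets each of the form $\{(w,s,u_{\tilde z})\mid \head(w)\in L_{\tilde z}'\}$ or the transversal slice, all synchronously regular [decidable]. (5) The case $x=s^{-1}$ is symmetric, using $L_z$ and $\nfset_{H/A}$. (6) Take the finite union over all the slices to conclude $\graph(\sff_G)$ is synchronously regular [decidable], so $G$ is autostackable [algorithmically stackable] by Definition~\ref{def:autostackable}.

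I expect the main obstacle to be the bookkeeping in step~(2)–(4): carefully justifying that the operation $w\mapsto\head(w)$ — and the auxiliary decompositions $\head(w)=\trans_B(w)\subg_B(w)$ with $\subg_B(w)\in SL_B$ — can be carried out \emph{synchronously} (not merely computably) in the autostackable case, so that the resulting languages on paired tapes stay in the \syreg\ class under the stated closure properties. This requires noting that $\nfset_H$ contains no occurrence of $s^{\pm1}$, so $\head(w)$ is exactly the terminal block of $w$ after the last $s^{\pm1}$, which a synchronous automaton can locate by reading $w$ and remembering whether it has passed an $s^{\pm1}$ symbol; and that the hypothesized regularity of $L_{\tilde z}'$ (rather than of the raw shortlex language $SL_B$, which might not interact well with $\nfset_H$) is precisely what lets us read off $\last(\subg_B(w))$ from $\head(w)$ by a finite-state test. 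In the algorithmically stackable case these synchronicity concerns evaporate and everything reduces to closure of decidable languages under union, intersection, complement, and composition with computable functions.
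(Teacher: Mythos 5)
Your overall strategy is the same as the paper's: both arguments take the stacking map $\sff_G$ of Remark~\ref{rmk:hnn}, split $\graph(\sff_G)$ into slices according to the generator read, and verify that each slice is \syreg\ [respectively, decidable] before taking a finite union. Your handling of the $x=s^{\pm1}$ slices and of the algorithmically stackable case is essentially right. However, there is a genuine gap in your step (3), the slice with $x\in Y$ in the autostackable case. You assert that $w\mapsto\head(w)$ is a ``synchronous operation'' and that the slice is obtained from $\graph(\sff_H)$ by this substitution, appealing to closure of \syreg\ languages under intersection, Cartesian product and projection. None of those closure properties gives you this, and the relation $\{(w,\head(w))\mid w\in\nfset_G\}$ is in general \emph{not} \syreg: after padding, a synchronous automaton would have to compare the second tape against a suffix of the first tape beginning at an unbounded offset, which a finite-state device cannot do. So precomposing $\graph(\sff_H)$ with head extraction is precisely the step that needs an argument, and your sketch does not supply one.

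The paper's fix rests on two ingredients you never invoke: boundedness of the flow function, which makes the output set $\proj_3(\graph(\sff_H))$ \emph{finite}, and closure of regular languages under \emph{concatenation}. For each fixed $y\in Y$ and each of the finitely many possible output words $v$, the set of $w\in\nfset_G$ with $\sff_H(\head(w),y)=v$ equals
$\Tail\cdot\proj_1\big(\graph(\sff_H)\cap(X^*\times\{y\}\times\{v\})\big)$
(the decomposition $w=\tail(w)\head(w)$ is unique because $\head(w)$ is the maximal $s^{\pm1}$-free suffix of $w$), and this is a concatenation of regular languages, hence regular; the $x\in Y$ slice is then the finite union over pairs $(y,v)$ of products of these sets with singletons. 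The same concatenation device ($\Tail\cdot\nfset_{H/\atob(A)}$, $\Tail\cdot L_{\tilde z}'$, and so on) is what justifies the regularity claims you leave implicit in steps (4) and (5). With ``synchronous head extraction'' replaced by this finite-union-plus-concatenation argument, your proof goes through.
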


\begin{proof}
We give the proof in the autostackable case; the
algorithmically stackable proof is similar.
In the notation of the proof of Theorem~\ref{thm:HNN}, 
let $s \in G$ be the stable letter of the HNN extension, and 
let $X=Y \cup \{s^{\pm 1}\}$.

Let $\sff_H$ be the the stacking map for the
autostackable structure for $H$,
and let $\sff_G$ be the stacking map for
$G$ from Remark~\ref{rmk:hnn}.
Now $\graph(\sff_H)$ is \syreg.
Let $\proj_1,\proj_3:(X^*)^3 \ra X^*$ be the
projection maps on the first and third coordinates.
Note that the normal form set for $H$ is 
$\nfset_H = \proj_1(\graph(\sff_H))$,
and since \syreg\ languages are closed
under projections, the set $\nfset_H$ is regular.
The normal form set for $G$ has the form
$\nfset_G=\Tail \cdot \nfset_H$ where
$$
\Tail:=(\nfset_{H/A}s^{-1} \cup \nfset_{H/\atob(A)}s)^* 
\cap (X^* \setminus 
\cup_{\epsilon \in \{\pm 1\}}X^*s^\epsilon s^{-\epsilon}X^*),
$$ 
and so $\nfset_G$ is built from regular languages using intersection,
union, complementation, concatenation 
and Kleene star. 
(See Section~\ref{sec:notation} for properties of
regular and \syreg\ languages.)
Hence $\nfset_G$ also is a regular language.

The graph of $\sff_G$ can be written
\begin{eqnarray*}
\graph(\sff_G) &=& 
 \big( \cup_{y \in Y,v \in \proj_3(\graph(\sff_H))}
  L_1
  \times \{y\} \times \{v\}  \big) \\
&&\cup
\big(\cup_{\epsilon \in \{\pm 1\}}
  L_{2,\epsilon} \times \{s^{\epsilon}\} \times \{s^{\epsilon}\} \big) \\
&&\cup
\big(\cup_{\tilde z \in \atob(Z)}
  L_{3,\tilde z}'\times \{s\} \times \{{\tilde z}^{-1}s\atob^{-1}(\tilde z)\} \big)\\ 
&&\cup
\big(\cup_{z \in Z}
  L_{3,z}\times \{s^{-1}\} \times \{z^{-1}s^{-1}\atob(z)\} \big)
\end{eqnarray*} 
where
\begin{eqnarray*}
L_1 &=&  \Tail \cdot \proj_1(\graph(\sff_H) \cap (X^* \times \{y\} \times \{v\})),\\
L_{2,1} &=&
  \Tail \cdot \nfset_{H/\atob(A)},  \\
L_{2,-1} &=&
  \Tail \cdot \nfset_{H/A}, \\
L_{3,z} &=& \Tail \cdot L_{z}, \text{   and} \\
L_{3,\tilde z}' &=& \Tail \cdot L_{\tilde z}'.
\end{eqnarray*} 
Again closure properties of regular
and \syreg\ languages show that
each of these languages is regular, and hence
so is $\graph(\sff_G)$.
\end{proof}


In the algorithmically stackable case,
Corollary~\ref{cor:autohnn} can be rephrased 
in terms of solvability of the subgroup membership problem.

\begin{corollary}\label{cor:alghnn}
Let $H$ be an algorithmically stackable group, 
let $A,B \leq H$ be finitely generated, and let $\atob:A\to B$ be
an isomorphism.  
Suppose further that the subgroup membership problem
is decidable for the subgroups $A$ and $B$ in $H$.
Then the HNN extension $G=H\ast_\atob$ is also 
algorithmically stackable. 
\end{corollary}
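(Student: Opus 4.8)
The plan is to derive this corollary from Corollary~\ref{cor:autohnn} by verifying that decidability of the subgroup membership problem for $A$ and $B$ supplies exactly the language-theoretic hypotheses needed there, with ``decidable'' in place of ``regular'' throughout. First I would fix a finite inverse-closed generating set $Y$ for $H$ with respect to which $H$ is algorithmically stackable, with stacking map $\sff_H$ having decidable graph, and normal form set $\nfset_H = \proj_1(\graph(\sff_H))$, which is decidable since the class of decidable languages is closed under projection (image under a monoid homomorphism sending nonempty words to nonempty words, after padding). Using Proposition~\ref{prop:stkbladdgens} I may enlarge $Y$ so that it contains a finite inverse-closed generating set $Z_A$ for $A$ and the corresponding set $Z_B = \atob(Z_A)$ for $B$; this enlargement preserves algorithmic stackability (the argument in the proof of Proposition~\ref{prop:stkbladdgens} adds finitely many rules $z = w_z$ to the stacking map, and appending finitely many triples $\nfset_H \times \{z\} \times \{w_z\}$ to a decidable graph keeps it decidable).

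Next I would produce the decidable transversal normal form sets $\nfset_{H/A}, \nfset_{H/B} \subseteq \nfset_H$ containing $\emptyword$. Given a decision procedure for ``$w \in A$'' (i.e.\ the subgroup membership problem for $A$) and the solvability of the word problem in $H$ (which follows from algorithmic stackability), one enumerates $\nfset_H$ in, say, shortlex order of the underlying element and, for each $w \in \nfset_H$, includes $w$ in $\nfset_{H/A}$ iff no strictly earlier element $w'$ of $\nfset_H$ already placed in $\nfset_{H/A}$ satisfies $w^{-1}w' \in A$; this is a decidable predicate, so $\nfset_{H/A}$ is decidable, and analogously for $\nfset_{H/B}$. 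Then I would verify that the sets $L_z$ and $L_{\tilde z}'$ of Corollary~\ref{cor:autohnn} are decidable: membership of $w \in \nfset_H$ in $L_z$ asks whether the (shortlex-least) element $v$ of $Z_A^*$ with $w \in \trans_A(w)\,v$ and $\trans_A(w) \in \nfset_{H/A}$ ends in the letter $z$; one computes $\trans_A(w)$ by searching $\nfset_{H/A}$ for the unique representative with $w^{-1}\trans_A(w) \in A$ (decidable by membership plus word problem), then computes $v$ as the shortlex normal form of $\trans_A(w)^{-1}w$ over $Z_A$ (decidable since the word problem in $H$, hence in $A$ with respect to $Z_A$, is solvable), and reads off its last letter. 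The same works for $L_{\tilde z}'$ using membership in $B$. With all these data decidable, Corollary~\ref{cor:autohnn} (algorithmically stackable case) applies directly and yields that $G = H \ast_\atob$ is algorithmically stackable.

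The main obstacle is the bookkeeping around the enlargement of the generating set and the shortlex subgroup normal forms: one must check that the decidable-languages hypotheses of Corollary~\ref{cor:autohnn} genuinely follow from subgroup membership \emph{together with} the solvable word problem of $H$ — in particular that $\trans_A(w)$ and $\subg_A(w)$ are effectively computable, which is where membership in $A$ and $B$ is used essentially — rather than from membership alone. Once one observes that algorithmic stackability of $H$ gives a solvable word problem for free (via the stacking presentation and the decidable stacking map, as recorded in Section~\ref{sec:intro}), every predicate above reduces to finitely many instances of the word problem and the subgroup membership problem, all decidable, so the argument goes through; the remaining steps are the routine closure properties of decidable languages already catalogued in Section~\ref{sec:notation}.
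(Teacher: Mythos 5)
Your proposal is correct and follows essentially the same route as the paper: reduce to Corollary~\ref{cor:autohnn}, take $\nfset_{H/A}$ and $\nfset_{H/B}$ to be the shortlex-least representatives in $\nfset_H$ (decidable by combining subgroup membership with the solvable word problem coming from the algorithmically stackable structure), and decide $L_z$ and $L_{\tilde z}'$ by effectively computing $\trans_A(w)$ and then the shortlex word for $\subg_A(w)$ over $Z_A$. One small caution: decidable languages are \emph{not} closed under projection in general, so your claim that $\nfset_H=\proj_1(\graph(\sff_H))$ is decidable should instead be justified by the boundedness of the flow function, which makes the third coordinate range over a finite set for each fixed first and second coordinate.
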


\begin{proof}
From the proof of Theorem~\ref{thm:HNN},
there is a finite inverse-closed
generating set $Y$ for $H$ containing
a finite inverse-closed set $Z$ of generators
for $A$ as well as the generators $\atob(Z)$
of $B=\atob(A)$, and there is a stackable structure
for $G=H\ast_\atob$ over $X=Y \cup \{s^{\pm 1}\}$
with Britton normal form set $\nfset_G$, such that
the associate stacking map is given in
Remark~\ref{rmk:hnn}.  From Corollary~\ref{cor:autohnn},
then, it suffices to show that there are
decidable transversals 
 $\nfset_{H/A},\nfset_{H/B} \subseteq \nfset_H$,
each containing $\emptyword$, such that the
languages $L_z$ and $L_{\tilde z}'$ of 
Corollary~\ref{cor:autohnn} are also decidable.

Let $<_{SL}$ denote the shortlex ordering 
on $Y^*$ corresponding to a total ordering of $Y$.
For each coset $hA$ of $H/A$, let 
$\tau_{hA}$ denote the shortlex least word in
$\nfset_H$ representing an element of $hA$,
and let 
$$
\nfset_{H/A} =\{\tau_{hA} \mid hA \in H/A\}.
$$
Note that the empty word $\emptyword$ is
an element of $\nfset_{H/A}$.

In order to determine whether a given
word $w \in Y^*$ lies in $\nfset_{H/A}$,
first use decidability to determine whether
$w \in \nfset_H$.  If not, then 
(halt and output) $w \notin \nfset_{H/A}$;
if so, we 
next enumerate the finite set $S$
of elements of $Y^*$
satisfying $v <_{SL} w$ for all $v \in S$.
For each word $v \in S$, 
use decidability to determine whether
$v \in \nfset_H$ and
use the solution of the
subgroup membership problem to determine
whether $v^{-1}w \in A$.
If there is a word $v \in S$ with $v \in \nfset_H$
and $v^{-1}w \in A$, then $w \notin \nfset_{H/A}$;
and if there is no such word in $S$, then
$w \in \nfset_{H/A}$.
Hence $\nfset_{H/A}$ is decidable.

Next suppose that $z \in Z$ and consider the set of Corollary \ref{cor:autohnn}
$$
L_z := \{u \in \nfset_H \mid u=_H \tau\sigma
\text{ for some }
\tau \in \nfset_{H/A}
\text{ and } \sigma \in SL_{A} \cap Z^*z\}
$$
where (as before) $SL_A$ is the set of shortlex
normal forms for $A$ over $Z$.
The algorithm to determine whether a
given word $w$ over $Y$ lies in $L_z$ 
also begins by using
decidability to determine whether
$w \in \nfset_H$, and if not,
halts with $w \notin L_z$.
If $w \in \nfset_H$,
then we repeat the algorithm in
the previous paragraph to compute the
word $\tau \in \nfset_{H/A}$ satisfying
$\tau=\tau_{wA}$; that is, $\tau A=wA$ and so
$\tau^{-1}w \in A$, and moreover $\tau \le_{SL} w$
is the shortlex least word with this property.
Next enumerate all words $y_0,y_1,y_2,...$
over $Z$ in increasing shortlex order.
Now since the word $\tau^{-1}w$ represents an
element of $A$, we have $\tau^{-1}w =_H y_j$ for
some indices $j$; we can use the solution
of the word problem from the algorithmically
stackable structure on $H$ to determine the
first index $i$ for which $\tau^{-1}w =_H y_i$.
Then $\sigma:=y_i \in \SL_A$ and $w=_H \tau\sigma$.
Now $w \in L_z$ iff $y_i$ ends with the letter $z$.
Thus $L_z$ is also decidable.

A similar argument shows that the set 
$$
\nfset_{H/B}=\{\tau_{hB}' \mid hB \in H/B\},
$$ where $\tau_{hB}'$ denotes the shortlex least
word in $\nfset_H$ representing an element of $hB$, is decidable
and contains $\emptyword$,
and for each $\tilde z \in \atob(Z)$
the set
$$
L_{\tilde z}' :=  \{w \in \nfset_H \mid 
    w=_H \tau\sigma
\text{ for some } \tau \in \nfset_{H/\atob(A)}
\text{ and } \sigma \in \atob(SL_{A}) \cap \atob(Z)^*\tilde z\}
$$ is also decidable.
\end{proof}

\bigskip


\section{Applications and Dehn functions
}\label{sec:stkapplications}


In this section we give three applications of 
Theorem~\ref{thm:HNN} and 
Corollaries~\ref{cor:autohnn}~and~\ref{cor:alghnn},
that give information on the Dehn functions of
stackable, algorithmically stackable, and
autostackable groups.

\bigskip


\subsection{Stackable versus autostackable}\label{subsec:unsolvablewp}


In the first application, we show that stackability
and autostackability are not the same property,
and that the class of stackable groups contains
groups whose Dehn function is not computable.

\begin{theorem}\label{thm:stknotauto}
There exists a stackable group with unsolvable word problem,
and hence stackability does not imply algorithmic stackability.
\end{theorem}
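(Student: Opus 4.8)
The plan is to build the desired example as an HNN extension to which Theorem~\ref{thm:HNN} applies, choosing the base group and the associated subgroups so that the resulting HNN extension has unsolvable word problem. Recall Mihailova's construction: inside the direct product $F \times F$ of two free groups of rank $2$, there is a finitely generated subgroup $P$ (the fiber product associated to a finitely presented group with unsolvable word problem) for which the membership problem ``does $w \in P$?'' is undecidable. The group $F \times F$ is a well-behaved group --- it is automatic, has a finite complete rewriting system, and in particular is stackable (indeed autostackable). So I would take $H = F\times F$, let $A = B = P$ be the Mihailova subgroup with $\atob = \mathrm{id}_P$, and form $G = H\ast_\atob$, the HNN extension with stable letter $s$ that conjugates $P$ to itself by the identity.

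First I would verify the hypotheses of Theorem~\ref{thm:HNN}: $H = F\times F$ is stackable (cite the fact, already noted in the introduction, that groups with finite complete rewriting systems are stackable --- or simply that $F\times F$ is autostackable), and $A = B = P$ is finitely generated by Mihailova's construction. Theorem~\ref{thm:HNN} then immediately yields that $G = H\ast_P$ is stackable. So the only remaining point is that $G$ has unsolvable word problem. For this I would use the standard Britton's Lemma argument: in $G = H\ast_P$, a word $s w s^{-1}$ with $w \in H$ represents an element of $H$ if and only if $w$ represents an element of $A = P$; more to the point, $s w s^{-1} w^{-1} =_G 1$ if and only if $w \in P$. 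Since the word problem of $H = F\times F$ is solvable (it is automatic), an algorithm solving the word problem in $G$ could be used to decide, for any $w\in H$, whether $s w s^{-1}w^{-1}=_G 1$, hence whether $w \in P$ --- contradicting undecidability of the membership problem for $P$ in $H$. Therefore $G$ has unsolvable word problem.

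Finally, since an algorithmically stackable group has a decidable bounded complete prefix-rewriting system and hence solvable word problem (as recalled after Definition~\ref{def:autostackable}), $G$ cannot be algorithmically stackable. This shows stackability does not imply algorithmic stackability, completing the proof. As a remark one can add (anticipating the next results of the section) that the Dehn function of $G$ is then not a computable function, since a group with computable Dehn function has solvable word problem.

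I expect the main obstacle to be purely expository rather than mathematical: one must be careful that the subgroups $A$ and $B$ used in the HNN extension are \emph{finitely generated} (which Mihailova's fiber product indeed is, being generated by finitely many elements coming from a finite generating set of the quotient group together with the diagonal of a finite generating set of $F$), since Theorem~\ref{thm:HNN} requires this; and one must phrase the Britton's Lemma reduction so that it genuinely reduces the membership problem for $P$ (not merely some weaker problem) to the word problem for $G$. Both of these are standard, so the argument is short once the ingredients are assembled.
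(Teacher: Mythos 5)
Your proposal is correct and follows essentially the same route as the paper: form the HNN extension of a direct product of free groups over the Mihailova subgroup (with the identity isomorphism), apply Theorem~\ref{thm:HNN} for stackability, and use the Britton's Lemma reduction $sws^{-1}w^{-1}=_G\groupid \iff w\in A$ to transfer undecidability of subgroup membership to the word problem. The only cosmetic difference is that the paper takes the free factors on the generating set of an arbitrary finitely presented group with unsolvable word problem rather than fixing rank $2$.
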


\begin{proof}
Let $C=\langle Y \mid R \rangle$ be a finitely
presented group with unsolvable word problem.
Let $Y'$ be a copy of $Y$, and let
$H=F(Y) \times F(Y')$ be a direct copy of
the free groups generated by $Y$ and $Y'$.
Also let $\rho:F(Y) \ra C$ and $\rho':F(Y') \ra C$
be the quotient maps.  Let $A$ be the
{\em Mihailova subgroup}
$$
A = \{(h,h') \in H \mid \rho(h)=\rho'(h')\}
$$
associated to $C$.
Mihailova~\cite{mihailova} showed that the
subgroup membership problem for $A$ in $H$ 
is not decidable; that is, there does not
exist an algorithm that
upon input of a word $w$
in the generating set $(Y \cup Y')^{\pm 1}$
of $H$,  can determine whether $w$
represents an element of the subgroup $A$.
The group $A$ is finitely generated
(see for example the paper of Bogopolski
and Ventura~\cite{bv} for a discussion and recursive
presentation for this group); let $Z$ be
a finite generating set for $A$.  

To construct an HNN extension from this
data, we let $\atob:A \ra A$ be the identity
function on $A$, and let $G=H*_\atob$.
Let $\widetilde Y:=Y \cup Y' \cup Z$, and 
for each $z \in Z$, let $w_z \in ((Y \cup Y')^{\pm 1})^*$
be a word satisfying $z =_H w_z$.  Then
\[\begin{array}{lr}
G=\langle \widetilde Y \cup \{s\} \mid & 
[y,y']=\emptyword \text{ for all }y \in Y, y' \in Y',~ \text{ and } \\
& z=w_z \text{ and } szs^{-1}=z \text{ for all }z \in Z \rangle.
\end{array}\]
Since the group $H$ is a direct product
of free groups, $H$ is a stackable group.
Now the proof of Theorem~\ref{thm:HNN}
shows that $G$ is a stackable group, with a
stackable structure over the generating set 
$(\widetilde Y \cup \{s\})^{\pm 1}$ yielding the above
as the stacking presentation.

If the word problem for $G$ were to have a solution,
then upon input of any word $sws^{-1}$
with $w \in (Y \cup Y')^{\pm 1~*} \subseteq \widetilde Y^{\pm 1~*}$,
the word problem algorithm can determine whether or not
$sws^{-1}w^{-1}=_G \groupid$.  However,
$sws^{-1}w^{-1}=_G \groupid$ if and only if $w$
represents an element of the subgroup $A$
in the domain of $\atob$.  Hence this solves
subgroup membership as well, giving a contradiction.
Since $G$ does not have solvable word
problem, this stackable group $G$ cannot
be algorithmically stackable.
\end{proof}

\bigskip


\subsection{Dehn functions for algorithmic stackability:  Hydra groups}\label{subsec:algdehn}


In our second application,
we show that
Dehn functions of algorithmically stackable groups can
be extremely large.

\begin{theorem}\label{thm:hydra}
The class of algorithmically stackable groups includes 
groups with Dehn functions in each level of
the Grzegorczyk hierarchy of primitive 
recursive functions.
\end{theorem}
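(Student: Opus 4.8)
The plan is to realize the hydra groups of Dison and Riley~\cite{disonriley} as iterated HNN extensions of free groups and then apply Corollary~\ref{cor:alghnn} repeatedly. Recall that the hydra group $G_k$ is the free-by-cyclic group $F_k \rtimes_{\theta_k} \mathbb{Z}$, where $F_k = F(a_1,\dots,a_k)$ and $\theta_k$ is the ``hydra'' automorphism fixing $a_1$ and sending $a_i \mapsto a_i a_{i-1}$ for $i \ge 2$; Dison and Riley showed that the membership problem for the subgroup $H_k = \langle a_1 t, \dots, a_k t, t\rangle$ (or a closely related subgroup) of $G_k$ is solvable and that the distortion of $H_k$ in $G_k$ realizes the $k$-th level of the Grzegorczyk hierarchy, which in turn forces the Dehn function of the HNN extension $\Gamma_k = G_k *_{H_k}$ (stable letter commuting with $H_k$) to grow at that level. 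So the target group is $\Gamma_k$, and I want to show each $\Gamma_k$ is algorithmically stackable.

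First I would check that the base group $G_k = F_k \rtimes_{\theta_k} \mathbb{Z}$ is itself algorithmically stackable. The cleanest route is to observe that $G_k$ is an ascending HNN extension of the free group $F_k$, with the associated monomorphism $\theta_k \colon F_k \to F_k$, so $G_k = F_k *_{\theta_k}$. Since $F_k$ is free, it has a finite complete rewriting system, hence is autostackable, hence algorithmically stackable, over a free generating set $Y$. To apply Corollary~\ref{cor:alghnn} I need the subgroup membership problem for the domain $A = F_k$ (all of $F_k$, trivially decidable) and for the image $B = \theta_k(F_k)$ in $F_k$; the latter is the finitely generated subgroup $\langle a_1, a_2 a_1, a_3 a_2, \dots, a_k a_{k-1}\rangle$ of a free group, and membership in a finitely generated subgroup of a free group is decidable by Stallings' foldings (or the classical Nielsen--Schreier/coset-enumeration argument). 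So $G_k$ is algorithmically stackable. Then for the final extension $\Gamma_k = G_k *_{H_k}$ with $\atob = \mathrm{id}_{H_k}$, I invoke Corollary~\ref{cor:alghnn} once more: $H_k$ is finitely generated by construction, and the Dison--Riley algorithm supplies a solution to the membership problem for $H_k$ in $G_k$ (and for $B = H_k$ it is the same problem). Hence $\Gamma_k$ is algorithmically stackable.

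Having established algorithmic stackability, the remaining point is the Dehn-function lower bound: I would cite that the distortion function of $H_k$ in $G_k$ lies in the $k$-th Grzegorczyk class~\cite{disonriley}, and use the standard fact that for an HNN extension $G *_{C}$ with $C$ finitely generated, the Dehn function is bounded below by (roughly) the distortion of $C$ in $G$ — more precisely, words of the form $s w s^{-1} w'^{-1}$ with $w, w'$ representing the same element of $C$ but $w'$ a short representative and $w$ a long $G$-geodesic-type word detect the distortion, so $\delta_{\Gamma_k} \succeq \mathrm{dist}_{H_k \le G_k}$. Combined with a matching upper bound (either from the general HNN Dehn-function estimate or directly from the stackable presentation), this places $\delta_{\Gamma_k}$ in level $k$ of the hierarchy, and letting $k$ vary gives the theorem.

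The main obstacle I anticipate is bookkeeping rather than conceptual: verifying precisely that the subgroup $H_k$ that Dison--Riley prove is both finitely generated and has solvable membership problem is exactly (or can be replaced by) a subgroup to which Corollary~\ref{cor:alghnn} applies, and checking the hypotheses of that corollary carefully (finite inverse-closed generating sets $Z \subseteq Y$ for the subgroups, decidability of the transversal sets $\nfset_{H/A}$, $\nfset_{H/B}$ and of the languages $L_z$, $L_{\tilde z}'$). In practice the reduction of those decidability requirements to solvability of the subgroup membership problem is already carried out in the proof of Corollary~\ref{cor:alghnn}, so the real work is just confirming that the Dison--Riley setup supplies the membership algorithm in the form needed, and that the base group $G_k$ can be presented so that the subgroups $A$ and $B$ sit inside the generating set as required. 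The Dehn-function bounds are then essentially quotation from~\cite{disonriley}.
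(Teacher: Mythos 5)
Your proposal is correct and follows essentially the same route as the paper: realize $\Gamma_k$ as the HNN extension of the hydra group $G_k$ over the finitely generated free subgroup $H_k$, apply Corollary~\ref{cor:alghnn} using the solution to the membership problem for $H_k$ in $G_k$ (which the paper takes from Dison--Einstein--Riley~\cite{der}, not the original hydra paper), and quote the Dehn function computation from~\cite{disonriley}. The only divergence is in how the base group $G_k$ is shown to be algorithmically stackable: the paper invokes closure of algorithmic stackability under group extensions (citing~\cite{bhj}), whereas you treat $G_k$ as an ascending HNN extension of $F_k$ and apply Corollary~\ref{cor:alghnn} a second time; both work (indeed $\theta_k$ is an automorphism of $F_k$, so the "image subgroup" whose membership you worry about is all of $F_k$), and neither buys anything over the other here.
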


\begin{proof}
Dison and Riley~\cite{disonriley} defined a family of 
groups $\Gamma_k$ (for $k \ge 2$),
built by HNN extensions, and showed that 
the Dehn function of $\Gamma_k$ is 
equivalent to the $k$-th Ackerman function.
In particular, 
for each integer $k \ge 2$, the group
$\Gamma_k=G_k *_{\atob_k}$ 
is an HNN extension of a free-by-cyclic group
$$
G_k=\langle a_1,...,a_k,t \mid ta_1t^{-1}=a_1,~ ta_it^{-1}=a_ia_{i-1}~(i>1)\rangle
$$
(known as a hydra group)
with respect to the identity map
$\atob_K:H_k \ra H_k$ on the finitely
generated (rank $k$ free) subgroup 
$H_k=\langle a_1t^{-1},...,a_kt^{-1}\rangle$.
Since the class of algorithmically
stackable groups is closed
under extension~\cite{bhj},
the group $G_k$ is algorithmically stackable.  
Theorem~\ref{thm:HNN} shows that $\Gamma_k$ is
also stackable.
Dison, Einstein and Riley~\cite[Theorem~3]{der} have
shown that for the subgroup
$H_k$ of $G_k$, the subgroup membership
problem is decidable.  Then Corollary~\ref{cor:alghnn}
shows that $\Gamma_k$ is also algorithmically stackable.
\end{proof}

\bigskip


\subsection{Dehn functions for autostackability: Baumslag's nonmetabelian group}\label{subsec:autodehn}









In this third application we consider
Baumslag's nonmetabelian group, also known as the
Baumslag-Gersten group, which is presented by
$$
G = \langle a,s \mid (sas^{-1})a(sa^{-1}s^{-1})=a^2 \rangle
  = \langle a,t,s \mid tat^{-1}=a^2,~sas^{-1}=t\rangle.
$$
This group can be realized as an HNN extension
$G = H *_\atob$
where $H$ is the Baumslag-Solitar group
$H = BS(1,2) = \langle a,t \mid tat^{-1}=a^2\rangle$
and $\atob:\langle a \rangle \ra \langle t \rangle$
is the map given by $\atob(a)=t$.  
The group $H$ is autostackable~\cite{MarkSusan,bhh}, and so
Theorem~\ref{thm:HNN} shows that $G$ is stackable.  
We strengthen this result to show the following.

\begin{theorem}\label{thm:nonmet}
Baumslag's nonmetabelian group 
$\langle a,s \mid (sas^{-1})a(sa^{-1}s^{-1})=a^2 \rangle$
is autostackable.
\end{theorem}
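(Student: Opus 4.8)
The plan is to apply Corollary~\ref{cor:autohnn} to the HNN decomposition $G = H *_\atob$ with $H = BS(1,2) = \langle a,t \mid tat^{-1} = a^2\rangle$, $A = \langle a \rangle$, and $\atob\colon \langle a\rangle \to \langle t\rangle$ given by $\atob(a) = t$. Since $A$ is cyclic and generated by the single element $a \in Y$ (after enlarging $Y$ by $\atob(A)$'s generator $t$, which is already present), the hypotheses of Corollary~\ref{cor:autohnn} reduce to checking: (1) $H$ is autostackable over a suitable generating set $Y$ containing $\{a, a^{-1}, t, t^{-1}\}$ — this is known~\cite{MarkSusan,bhh}, and Proposition~\ref{prop:stkbladdgens} lets us adjust the generating set as needed; (2) there are regular transversals $\nfset_{H/\langle a\rangle}$ and $\nfset_{H/\langle t\rangle}$ inside $\nfset_H$, each containing $\emptyword$; and (3) the languages $L_a, L_{a^{-1}}$ (tracking when $\subg_{\langle a\rangle}$ ends in $a$ resp.\ $a^{-1}$) and $L_t, L_{t^{-1}}$ are regular.

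First I would recall or fix the explicit autostackable structure on $BS(1,2)$, whose normal form set $\nfset_H$ is a well-understood regular language — every element of $BS(1,2) = \Z[\tfrac12] \rtimes \Z$ has a canonical form, roughly $t^{-i} a^m t^j$ with suitable constraints (or the "horocyclic" normal forms used in~\cite{bhh}). With this explicit $\nfset_H$ in hand, I would write down the cosets of $\langle a\rangle$ and of $\langle t\rangle$. The subgroup $\langle a\rangle$ is the "bottom horocycle'' and $\langle t\rangle$ is a vertical line; in both cases a transversal can be described by a simple constraint on the normal form word (e.g.\ for $H/\langle a\rangle$, take the normal forms with $a$-exponent $0$; for $H/\langle t\rangle$, take normal forms with $t$-exponent $0$ after pushing the $a$-part into a bounded range), and such constraints are plainly regular, and can be arranged to contain $\emptyword$.

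Next I would verify the regularity of $L_z$ for $z \in \{a, a^{-1}\}$ and $L_{\tilde z}'$ for $\tilde z \in \{t, t^{-1}\}$. Here $\subg_{\langle a\rangle}(w) \in SL_{\langle a\rangle} = \{a^k : k \in \Z\}$ is just a power of $a$, so "$\subg_{\langle a\rangle}(w)$ ends in $a$'' means the $a$-exponent of $w$ (reduced modulo the transversal choice) is positive, and "ends in $a^{-1}$'' means it is negative — both detectable by a finite automaton reading the canonical form of $w$. Similarly $\atob(SL_{\langle a\rangle}) = \{t^k : k \in \Z\}$, so $L_t', L_{t^{-1}}'$ are governed by the sign of the $t$-displacement into the coset; again regular. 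The only subtlety is making sure the various constraints are \emph{consistent} with the fixed $\nfset_H$ (i.e.\ that the transversal representatives actually lie in $\nfset_H$ and that the "last letter of $\subg$'' is computed correctly from the word), which is a finite bookkeeping check given the concrete form of $\nfset_H$.

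I expect the main obstacle to be purely organizational rather than conceptual: pinning down one explicit autostackable structure on $BS(1,2)$ (generating set, normal forms, and stacking map) for which all of the coset and subgroup-element languages above are simultaneously regular and contain the empty word, and then confirming that the padded-automaton closure arguments of Corollary~\ref{cor:autohnn} go through verbatim. No genuinely new idea beyond Corollary~\ref{cor:autohnn} should be required; the work is in exhibiting the regular languages $\nfset_{H/\langle a\rangle}$, $\nfset_{H/\langle t\rangle}$, $L_a$, $L_{a^{-1}}$, $L_t'$, $L_{t^{-1}}'$ concretely and checking the hypotheses.
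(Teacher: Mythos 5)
Your proposal is correct and follows essentially the same route as the paper: the paper's proof of Theorem~\ref{thm:nonmet} likewise realizes $G$ as $H*_\atob$ with $H=BS(1,2)$, $A=\langle a\rangle$, $B=\langle t\rangle$, $\atob(a)=t$, fixes the Britton/rewriting-system normal forms $\nfset_H$, exhibits explicit regular transversals $\nfset_{H/A}$ and $\nfset_{H/B}$, shows $L_{a^{\pm 1}}=\nfset_H\cap Y^*a^{\pm 1}$, $L_t'=\nfset_H\cap Y^*tY^*$, and $L_{t^{-1}}'=\nfset_H\setminus(\nfset_{H/B}\cup L_t')$ are regular, and then invokes Corollary~\ref{cor:autohnn}. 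The remaining work you flag as "organizational" is exactly the explicit bookkeeping the paper carries out.
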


\begin{proof}
The Baumslag-Solitar group
$H=\langle a,t \mid tat^{-1}=a^2 \rangle$
has a finite complete rewriting system on the
generating set $Y=\{a^{\pm 1},t^{\pm 1}\}$
given by
$$
\{a^\epsilon a^{-\epsilon} \ra \emptyword,~
t^\epsilon t^{-\epsilon} \ra \emptyword,~
a^2t \ra ta, a^{-1}t \ra ata^{-1},~
a^\epsilon t^{-1} \ra t^{-1}a^{2\epsilon} 
\mid \epsilon \in \{\pm 1\}~\},
$$
and hence is autostackable~\cite{bhh}.
The normal form set of this autostackable
structure is the regular language
\begin{eqnarray*}
\nfset_H &=& [(\{1,a\}\cdot t,t^{-1})^* 
  \setminus \cup_{\epsilon \in \{\pm 1\}} Y^*t^\epsilon t^{-\epsilon}Y^*]
  (a^* \cup (a^{-1})^*) \\
&=& [(t^{-1})^* \cup ((t^{-1})^*at \cup \emptyword)(\{\emptyword,a\}\cdot t)^*]
  (a^* \cup (a^{-1})^*).
\end{eqnarray*}
(That is, $H$ is an HNN extension of the
infinite cyclic group $\langle a \rangle$
by the monomorphism $\langle a \rangle \ra \langle a \rangle$
defined by $a \mapsto a^2$, and
$\nfset_H$ is the associated set of Britton normal
forms.)

Let $A=\langle a \rangle$ and $B=\langle t \rangle$,
subgroups of $H$, and let $\atob:A \ra B$ be
the map $\atob(a)=t$, so that $G=H \ast_{\atob}$.
Then the generating set $Z:=\{a^{\pm 1}\}$ for
$A$ and its image $\atob(Z)=\{t^{\pm 1}\}$
are both subsets of the generating set $Y$ of $H$.

By Corollary~\ref{cor:autohnn}, it now suffices to
show that there are regular transversals for
these subgroups such that each of the
languages $L_z$ and $L_{\tilde z}'$ is
regular.

Define
\begin{eqnarray*}
\nfset_{H/A} &:=& (t^{-1})^*
\cup  ((t^{-1})^*at \cup \emptyword)(\{\emptyword,a\}\cdot t)^*
\text{   \ \ \ and} \\
\nfset_{H/B} &:=& a^* \cup (a^{-1})^* \cup 
t^{-1}(t^{-1})^* \cdot (a(a^2)^* \cup a^{-1}(a^{-2})^*).
\end{eqnarray*}
Then $\nfset_{H/A}$ and $\nfset_{H/B}$ are
 subsets of $\nfset_H$ (each containing $\emptyword$)
that are transversals
for $A$ and $B$ in $H$, respectively.
  We note that
$\nfset_{H/A}$ and $\nfset_{H/B}$ are built from finite
sets using unions, concatentations, and Kleene star,
and so both of these sets are also regular languages.

The set of
shortlex normal forms for elements of the subgroup
$A$ over the generating set $Z$ is
$SL_A = a^* \cup (a^{-1})^*$, and similarly 
the shortlext normal forms for $B$ over $\atob(Z)$ is
$SL_B = t^* \cup (t^{-1})^*$.  

Let $z \in Z$.  Then $z=a^\epsilon$ for some
$\epsilon \in \{\pm 1\}$.  Since
$\nfset_H=\nfset_{H/A}SL_A$, the language
$L_{a^\epsilon}$ satisfies
\begin{eqnarray*}
L_{a^\epsilon} &=& \{w \in \nfset_H \mid w=_H \trans_A(w)\subg_A(w)
\text{ for some }
\trans_A(w) \in \nfset_{H/A} \\
&& ~\hspace{1in} \text{ and } \subg_A(w) =a^{\epsilon i} \text{ with } i>0\} \\
&=& \nfset_H \cap Y^*a^{\epsilon}.
\end{eqnarray*}
Then $L_{a^\epsilon}$ is an intersection of regular
languages, and hence is also regular.

Next, for $\tilde z \in \atob(Z)$,
we have $\tilde z=t^{\epsilon}$ with $\epsilon \in \{\pm 1\}$.
Suppose first that $\epsilon=1$.
Then
\begin{eqnarray*}
L_{t}' &=&  \{w \in \nfset_H \mid 
    w=_H \trans_B(w)\subg_B(w)
\text{ for some } \trans_B(w) \in \nfset_{H/\atob(A)} \\
&& ~\hspace{1in} \text{ and } \subg(w)=t^k \text{ with } k>0\}.
\end{eqnarray*}
Suppose that $w \in \nfset_H \cap Y^*tY^*$;
that is, 
$w \in ((t^{-1})^*at \cup t)(\{\emptyword,a\}\cdot t)^*]
  (a^* \cup (a^{-1})^*)$.
Then either 
$w=t^{-i}ata^{\epsilon_1} \cdots ta^{\epsilon_k}ta^\ell$,
or
$w=ta^{\epsilon_1} \cdots ta^{\epsilon_k}ta^\ell$
for some $i,k \ge 0$, $\epsilon_i \in \{0,1\}$
and $\ell \in \Z$.  
Hence either
$\trans_B(w)=t^{-i}a^{1+2\epsilon_1+\cdots+2^k\epsilon_k+2^{k+1}\ell}$
or 
$\trans_B(w)=a^{2\epsilon_1+\cdots+2^k\epsilon_k+2^{k+1}\ell}$
(respectively),
and $\subg_B(w)=t^{k+1}$.  Since $k \ge 0$, then
$\last(\subg(w))=t$, and so
$w \in L_t'$.  Hence $L_t' \supseteq \nfset_H \cap Y^*tY^*$.
On the other hand, for any $v \in L_t'$,
we have $v=\nf{a^jt^k}$ or $v=\nf{t^{-i}a^{2j+1}t^k}$
for some $i \ge 0$, $j \in \Z$, and $k>0$.
Applying the rules of the rewriting system above,
then the normal form $v$ must contain the letter $t$.
That is,
$$
L_t'=\nfset_H \cap Y^*tY^*
$$
and therefore this set is a regular language.

Finally we consider the set
\begin{eqnarray*}
L_{t^{-1}}' &=&  \{w \in \nfset_H \mid 
    w=_H \trans_B(w)\subg_B(w)
\text{ for some } \trans_B(w) \in \nfset_{H/\atob(A)}\\
&& ~\hspace{1in} \text{ and } \subg(w)=t^{-i} \text{ with } i>0\}.
\end{eqnarray*}
In this case we have 
$$
L_{t^{-1}}'=\nfset_H \setminus (\nfset_{H/B} \cup L_t')
=t^{-1}(t^{-1})^*((a^2)^* \cup (a^{-2})^*),
$$
and so $L_{t^{-1}}'$ is also regular.

Corollary~\ref{cor:autohnn} now shows that $G$ is
autostackable.
\end{proof}

The following Corollary is now immediate from 
Theorem~\ref{thm:nonmet} and Platonov's proof 
that the Dehn function of Baumslag's nonmetabelian
group is not elementary~\cite{platonov}.

\begin{corollary}\label{cor:autodehn}
The class of autostackable groups 
includes groups with nonelementary Dehn functions.
\end{corollary}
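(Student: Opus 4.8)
The plan is to read the corollary off immediately from Theorem~\ref{thm:nonmet} together with the known estimate for the Dehn function of Baumslag's nonmetabelian group, so there is essentially nothing new to prove. First I would let $G = \langle a,s \mid (sas^{-1})a(sa^{-1}s^{-1}) = a^2\rangle$ be Baumslag's nonmetabelian group. By Theorem~\ref{thm:nonmet}, $G$ is autostackable, so it suffices to exhibit a single group in the autostackable class whose Dehn function is non-elementary, and $G$ is that group.

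Next I would invoke Platonov~\cite{platonov} for the upper bound and Gersten~\cite{gersten} for the matching lower bound: the Dehn function of $G$ is equivalent (in the standard sense of equivalence of Dehn functions) to $f(n) := \mathsf{tower}_2(\log_2 n)$, where $\mathsf{tower}_2(1)=2$ and $\mathsf{tower}_2(k)=2^{\mathsf{tower}_2(k-1)}$. To conclude, I would record why $f$ is non-elementary. Write $h_0(n):=n$ and $h_k(n):=2^{h_{k-1}(n)}$, so that $h_k$ is the $k$-fold iterated exponential; every elementary function is dominated by some $h_k$. For fixed $k$ one computes $\log_2^{(k)} f(n) = \mathsf{tower}_2(\log_2 n - k)$ while $\log_2^{(k)} h_k(n)=n$, and the crude estimate $\mathsf{tower}_2(m)\ge 2^{2^{m-1}}$ (valid for $m\ge 2$) gives $\mathsf{tower}_2(\log_2 n - k)\ge n$ for all sufficiently large $n$; applying $x\mapsto 2^x$ $k$ times (a monotone operation) yields $f(n)\ge h_k(n)$ for all large $n$. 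Hence $f$ eventually dominates every elementary function, so $f$ --- and therefore the Dehn function of the autostackable group $G$ --- is non-elementary, which is exactly the assertion of Corollary~\ref{cor:autodehn}.

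As this discussion makes clear, there is really no obstacle: the entire content lies in Theorem~\ref{thm:nonmet}, and the non-elementarity of $\mathsf{tower}_2\circ\log_2$ is a standard fact about growth rates in the class of elementary functions. The only mildly technical point, should one want a self-contained account, is the growth-rate comparison sketched above, which is routine.
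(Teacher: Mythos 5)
Your proposal is correct and follows exactly the route the paper takes: the corollary is immediate from Theorem~\ref{thm:nonmet} together with Platonov's computation of the Dehn function of Baumslag's nonmetabelian group. The extra growth-rate verification that $\mathsf{tower}_2\circ\log_2$ is nonelementary is a harmless (and correct) elaboration of what the paper simply cites as Platonov's result.
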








\section{Autostackable metabelian groups}\label{sec:metabelian}


In this section we consider an infinite family of
nonconstructible metabelian groups.
Let $p \in \{n \in \Z \mid n \ge 2\} \cup \{\infty\}$, and 
let 
$$
G_p=\langle a,s,t\mid a^p=1, [a^t,a]=1,a^s=a^ta, [s,t]=1\rangle,
$$
where the case $p=\infty$ means that no relation $a^p=1$ occurs. 
The group $G_\infty$ is Baumslag's metabelian group,
which is introduced in~\cite{Baumslag1},
and for $p<\infty$, the torsion analog $G_p$ of
Baumslag's metabelian group is the Diestel-Leader
group $\Gamma_3(p)$ (which is also metabelian).
Our objective in this section is to show in Theorem~\ref{thm:metabelian}
that $G_\infty$ is 
algorithmically stackable and
the groups $G_p$ for $p<\infty$ are autostackable.

We begin with a description of the subgroup structure 
of $G_p$, following~\cite{Baumslag1}.
Let  $H_p $ be the subgroup of $G_p$
generated by $Y=\{a^{\pm 1},t^{\pm 1}\}$.
In his paper~\cite{Baumslag1}, 
Baumslag showed that
a consequence of the relations in the presentation
above of $G_\infty$ is that
$[a^{t^i},a^{t^j}]=1$ for all $i,j \in \Z$.  Moreover 
\begin{eqnarray*}
H_p &=& 
\langle a,t \mid [a^{t^i},a^{t^j}]=1 \text{ for all } i,j \in \Z , a^p=1\rangle\\
& = & \big( \bigoplus_{i \in \Z} \langle t^iat^{-i}\rangle \big) 
  \rtimes \langle t \rangle,
\end{eqnarray*}
where 
$\langle t^iat^{-i}\rangle$ is
isomorphic to $\Z_p$ for each $i$,
$\langle t \rangle \cong \Z$, and $t$ acts 
on $\bigoplus_{i \in \Z} \langle t^iat^{-i}\rangle$
conjugating the $i$-th summand to the 
$(i+1)$-th summand; that is, $H_p$
is the (restricted) wreath product $H_p = \Z_p \wr \Z$.
(In the case that $p=2$, the group $H_2$ is also
known as the lamplighter group.)
Let $\atob:H_p \ra \langle a^ta,t \rangle \le H_p$
be the map defined by $\atob(a)=a^ta$ and $\atob(t)=t$;
then the group $G_p$ is the HNN extension
$G_p = H_p *_{\atob_p}$, and
the generator $s$ of $G_p$ is the corresponding stable letter.

The crucial difference with Theorem~\ref{thm:HNN}
is that in this case the group $H_p$ 
is not finitely presentable, 
so  $H_p$ cannot have a stackable structure.
Despite this, there are some analogies between
the proofs of 
Theorems~\ref{thm:HNN} and~\ref{thm:metabelian}; 
in particular, the Britton set of normal forms are
used for the HNN extensions in both.

In order to describe the normal form set
for $G_p$, and to streamline other parts
of the proof of Theorem~\ref{thm:metabelian},
we also make use of an another way to view 
the elements of this group.
Using the isomorphism $\ism$ between
$\big( \bigoplus_{i \in \Z} \langle t^iat^{-i}\rangle \big)$
and $\Z_p[x,\frac{1}{x}]$ given by
$\ism((t^{i_1}a^{\beta_1}t^{-i_1}) \cdots (t^{i_n}a^{\beta_n}t^{-i_n})) 
:= \beta_1x^{i_1} + \cdots + \beta_nx^{i_n}$,
there are isomorphisms
\begin{eqnarray*}
\ism: H_p & \ra & \widehat H_p := 
\Z_p[x,\frac{1}{x}] \rtimes \langle \hat t \rangle 
\hspace{.2in} \text{ and} \\ 
\ism: G_p & \ra & \widehat G_p := 
\big( \Z_p[x,\frac{1}{x}] \rtimes \langle \hat t \rangle \big) *_{\hat \atob}
\end{eqnarray*}
(with $\ism(t):=\hat t$ and $\ism(s):=\hat s$)
where the conjugation action of
$\hat t$ on $\Z_p[x,\frac{1}{x}]$ is multiplication by $x$,
and the map 
$\hat \atob: H_p \ra \langle 1+x \rangle \rtimes \langle \hat t \rangle$ 
is defined by $\hat \atob(x^0):=1+x$ and $\hat \atob(\hat t):= \hat t$;
that is, the conjugation action by the stable letter $\hat s$
on $\widehat H_p$ is given by multiplication by $1+x$
on the $\Z_p[x,\frac{1}{x}]$ subgroup and fixes $\hat t$.

\begin{theorem}\label{thm:metabelian}
Baumslag's metabelian group 
$G_\infty=\langle a,s,t\mid a^s=a^ta, [a^t,a]=1,[s,t]=1\rangle$
is algorithmically stackable, 
and the Diestel-Leader
torsion analogs
$G_p=\langle a,s,t\mid a^s=a^ta, [a^t,a]=1,[s,t]=1,a^p=1\rangle$
with $p \ge 2$ are 
autostackable.
\end{theorem}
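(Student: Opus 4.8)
The plan is to realize $G_p$ as the HNN extension $G_p = H_p \ast_{\atob}$ with $H_p = \Z_p \wr \Z$, and then to carry out the strategy of Theorem~\ref{thm:HNN} ``by hand'': since $H_p$ is not finitely presentable it has no stackable structure, so we cannot cite Proposition~\ref{equivalence} directly, but we can still build \ftg\ van Kampen diagrams (equivalently, a bounded complete prefix-rewriting system) for $G_p$ over the finite presentation in the statement, using explicit combinatorics of the wreath product in place of a stacking structure on $H_p$. The normal form set will be the Britton set $\nfset_{G_p}$ built from a normal form set $\nfset_{H_p}$ for $H_p$ and transversals $\nfset_{H_p/A}$, $\nfset_{H_p/\atob(A)}$ for the subgroups $A = H_p$ (note $\atob$ is an endomorphism of $H_p$ onto a proper subgroup, so the ``$s^{-1}$-side'' transversal is trivial) and $\atob(A) = \langle a^ta, t\rangle$. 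Using the polynomial picture $\widehat H_p = \Z_p[x,\tfrac1x] \rtimes \langle \hat t\rangle$, the subgroup $\atob(A)$ corresponds to $(1+x)\Z_p[x,\tfrac1x] \rtimes \langle \hat t\rangle$, so a coset transversal for $\atob(A)$ in $H_p$ is given by the residues modulo $(1+x)$ of Laurent polynomials; concretely one can pick, for each element of $H_p$, a canonical ``reduced'' representative and read off the unique transversal element. I would fix $\nfset_{H_p}$ to be a suitable shortlex-type normal form set over $Y = \{a^{\pm1}, t^{\pm1}\}$ (for the lamplighter-type group $\Z_p\wr\Z$ these are the familiar ``lamp configuration plus cursor position'' normal forms, and they form a regular language), and check it is prefix-closed.

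The construction of $\Delta_{w,x}$ then splits exactly as in the proof of Theorem~\ref{thm:HNN}. \textbf{Case $x \in Y$:} here we need, for each $w \in \nfset_{G_p}$ and $y \in Y$, a diagram relating $\head(w)\,y$ to its $\nfset_{H_p}$-normal form; since $H_p$ is not stackable we instead produce these diagrams directly from the (infinite but uniformly describable) rewriting system for $\Z_p\wr\Z$ over $Y$, which is bounded, and verify via Lemma~\ref{tail} that prepending $\tail(w)$ keeps us inside $\nfset_{G_p}$. \textbf{Case $x = s^{\pm1}$:} the $s^{-1}$ case is immediate since $\nfset_{H_p/A}$ is trivial ($A = H_p$), so $ws^{-1}$ is essentially already in normal form up to Britton cancellation. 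For $x = s$ we induct on the length of the subgroup part $\subg_{\atob(A)}(w)$ (written in generators of $\atob(A)$): writing $\subg_{\atob(A)}(w) = v' b$ with $b$ a last generator, we use the relation $b^{-1} s \atob^{-1}(b) s^{-1}$ — i.e. the defining relation $a^s = a^t a$ rewritten — to glue a single 2-cell onto diagrams supplied by Case~1 and the inductive hypothesis, exactly as in Figure~1. The bound on relator length is uniform because the relators of $G_p$ are fixed finite words.

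For the \emph{autostackable} conclusion when $p < \infty$ (and \emph{algorithmically stackable} when $p = \infty$), I would then verify the hypotheses of Corollary~\ref{cor:autohnn} (resp.\ its decidable analog). The generating set $Z = \{a^{\pm1}\}$ for $A$ and $\atob(Z) = \{(a^ta)^{\pm1}\}$ need to lie in a finite generating set $Y$ of $H_p$; since $a^ta$ is a word in $Y = \{a^{\pm1},t^{\pm1}\}$ we first invoke Proposition~\ref{prop:stkbladdgens}-style bookkeeping to adjoin $(a^ta)^{\pm1}$ as letters. One must exhibit regular (resp.\ decidable) transversals $\nfset_{H_p/A}$, $\nfset_{H_p/\atob(A)}$ and show the languages $L_z$, $L_{\tilde z}'$ of Corollary~\ref{cor:autohnn} are regular (resp.\ decidable). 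When $p < \infty$ this reduces to recognizing, in the regular normal-form language for $\Z_p\wr\Z$, the last generator of the subgroup part and the residue of a lamp configuration modulo $(1+x)$ over $\Z_p$ — both are finite-state computations on the lamp configuration read left to right, since reduction modulo $(1+x)$ of a $\Z_p$-polynomial is a sequential (carry-type) process with finitely many states. When $p = \infty$ the residue-mod-$(1+x)$ computation over $\Z$ is no longer finite-state but is plainly decidable, giving algorithmic stackability.

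\textbf{Main obstacle.} The hard part is Case~1 together with the finite-state verification for $p < \infty$: without a stackable structure on $H_p$ to lean on, one has to produce the Case~1 diagrams — and the associated bounded prefix-rewriting behavior — from an explicit, uniformly bounded rewriting system for the infinitely presented group $\Z_p \wr \Z$, and then prove that the transversal $\nfset_{H_p/\atob(A)}$ and the ``last-generator'' languages $L_z, L_{\tilde z}'$ are regular. Concretely this means showing that ``reduce the lamp-configuration polynomial modulo $1+x$ and record whether the final subgroup generator is $a$ or $a^{-1}$'' can be done by a synchronous finite automaton reading the normal-form word; the carry/borrow analysis over $\Z_p$ is routine but is the technical heart, and it is exactly where the $p < \infty$ hypothesis is used to get the stronger autostackable conclusion rather than merely the algorithmically stackable one.
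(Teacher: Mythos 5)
Your high-level framing matches the paper's: Britton normal forms built from $\nfset_{H_p}$ with trivial transversal for $A=H_p$ and residues modulo $1+x$ for $\atob(H_p)$, the polynomial picture $\Z_p[x,\frac{1}{x}]\rtimes\langle\hat t\rangle$, and a finite-state computation of $p_u(-1)$ modulo $p$ to get regularity when $p<\infty$. But there is a fatal gap in your Case~1. You propose to handle $x\in Y=\{a^{\pm1},t^{\pm1}\}$ by producing the diagrams ``directly from the (infinite but uniformly describable) rewriting system for $\Z_p\wr\Z$ over $Y$, which is bounded.'' No such bounded system exists: a bounded complete prefix-rewriting system over $Y$ whose irreducible words form your prefix-closed normal form set is exactly a stackable structure on $H_p$, which you yourself note is impossible because $H_p$ is not finitely presentable. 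Concretely, right-multiplying $u=t^{r}a^{\alpha_r}t\cdots ta^{\alpha_l}t^{-l+m}$ by $a^{\pm1}$ toggles the lamp at position $m$, which sits arbitrarily deep inside the word when $m\ll l$; no uniformly bounded suffix rewriting over $Y$ alone can reach it. The paper's Cases 3, 5, 6 and 8 exist precisely to solve this: when $m_u-l_u<-1$ the flow function on an edge labeled $z=a^{\delta}$ is forced to use the stable letter, e.g.\ $\ff(e_{u,z})=\path(u,ta^{-\delta}t^{-1}sa^{\delta}s^{-1})$, exploiting that conjugation by $s$ is multiplication by $1+x$; termination of the resulting recursion is then established by a delicate well-founded induction on the pair $(|m_u-l_u|,\,p_u(-1)-\delta(-1)^{m_u+1})$. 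This intertwining of the ``$H_p$-rewriting'' with the HNN structure is the central idea of the proof and is absent from your proposal.

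Two smaller points. First, the $s^{-1}$ case is not ``immediate'': although $\nfset_{H/A}=\{\emptyword\}$, reducing $us^{-1}$ to normal form requires pushing $s^{-1}$ past the entire head of $u$, applying $\atob$ generator by generator (the paper's Cases 4 and 7, an induction on $l(\head(u))$); it is symmetric to your $x=s$ case rather than degenerate. Second, you cannot ``verify the hypotheses of Corollary~\ref{cor:autohnn},'' since those hypotheses include that $H$ itself be autostackable; the paper instead writes out the stacking map explicitly as a union of fifteen pieces and proves each graph \syreg\ directly, the automaton computing $p_u(-1)$ modulo $p$ being, as you correctly anticipated, the technical heart of that step.
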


\begin{proof}
Although we cannot directly apply Theorem~\ref{thm:HNN}, 
we will use roughly the same ingredients in order 
to build a stacking system for the group $G_p$.
Consider the inverse closed generating set
$$
X = \{a^{\pm 1},s^{\pm 1},t^{\pm 1}\}
$$
for $G_p$.

\bigskip

\noindent{\it Step I. Normal forms and notation:}

\smallskip

In order to build a set of normal forms for the group $H_p$,
we use the isomorphism $H_p \cong \widehat H_p$,
and note that an arbitrary element of $\widehat H_p$
can be written uniquely in the form
$p(x)\hat t^m$ where 
$p(x) \in \Z_p[x,\frac{1}{x}]$
and $m \in \Z$.
Let
\begin{eqnarray*}
\nfset_{H_p} &:=& \{t^m \mid m \in \Z\} \cup \\
&&\{t^{r}a^{\alpha_{r}}ta^{\alpha_{r+1}}\ldots ta^{\alpha_{l}}t^{-l+m}
\mid 
r,l,m\in\Z,r\leq l,\alpha_i\in\Z_p\text{ for }r\leq i\leq l, \\
&& ~\hspace{2in} \text{and } \alpha_r,\alpha_l \neq 0\};
\end{eqnarray*}
here (and throughout this proof)
we write $\alpha_i \in \Z_p$ to mean that
$\alpha_i \in \Z$ in the case that $p=\infty$,
and $\alpha_i \in \{0,1,...,p-1\}$ if $p$ is finite.
Then the restriction of the map $\ism$ to
the set $\nfset_{H_p}$
gives a bijection 
$\bj:\nfset_{H_p} \ra \widehat H_p$ defined by
$\bj(t^m) := \hat t^m$ and
$$
\bj(t^{r}a^{\alpha_{r}}ta^{\alpha_{r+1}}\ldots ta^{\alpha_{l}}t^{-l+m})
:=
(\alpha_rx^r+ \alpha_{r+1}x^{r+1} + \cdots +\alpha_lx^l)\hat t^m,
$$ 
where the integers $r$ and $l$ are the lowest and
highest degrees in the polynomial in the
$\Z_p[x,\frac{1}{x}]$ subgroup, respectively, and
hence $\nfset_{H_p}$ is a set of normal forms
for $H_p$.

The HNN extension $G_p=H_p *_\atob$ is strictly ascending,
in that the isomorphism $\atob:A \ra B$ of subgroups of $H_p$
maps the full 
group $A=H_p$ to the proper subgroup $B=\langle a^ta,t\rangle$.
The set $\nfset_{H/A}:=\{1\} \subseteq \nfset_{H_p}$ is
a transversal for $H/A$.  
Under the map $\ism$ the subgroup $B$  is isomorphic to the 
split extension by $\Z=\langle \hat t\rangle$ of the ideal $I$ 
of $\Z_p[x,1/x]$ generated by $1+x$. This implies that 
$$
H/B\cong\Z_p[x,\frac{1}{x}]/I\cong\Z_p, 
$$
and the set
$$
\nfset_{H/B}:=\{a^\beta\mid\beta\in\Z_p\}
$$
is a set of normal forms of a set of representatives of the cosets 
of $B$ in $H$. 
The corresponding Britton normal form set
for the HNN extension $G_p$ is given by
\begin{eqnarray*}
\nfset_{G_p} &:=& \{s^{-k}a^{\beta_1}sa^{\beta_2}s\ldots sa^{\beta_n}sh
\mid
k,n \ge 0, \beta_i\in\Z_p\text{ for }1\leq i\leq n, \\
&& ~\hspace{.7in} \beta_1 \neq 0 \text{ if both } k>0 \text{ and } n>0,
\text{ and } h\in\nfset_{H_p}\}.
\end{eqnarray*}

Given such a word 
$u=s^{-k}a^{\beta_1}sa^{\beta_2}s\ldots sa^{\beta_n}sh 
\in \nfset_{G_p}$, as in Notation~\ref{britton}
we denote the {\em tail} and {\em head} of $u$ as
$\tail(u) := s^{-k}a^{\beta_1}sa^{\beta_2}s\ldots sa^{\beta_n}s$
and $\head(u):=h$, respectively.
Moreover, let $p_u(x) \in \Z_p[x,\frac{1}{x}]$ and 
$m_u \in \Z$ be defined by
$p_u=0$ and $\rho(\head(u))=\hat t^{m_u}$
in the case that $\head(u)$ is a power of $t$, and
$\rho(\head(u))=p_u(x)\hat t^{m_u}$ otherwise.
Also in the latter case 
let $r_u$ and $l_u$ denote the lowest and
highest degrees, respectively, of monomials
in $p_u(x)$.  Let 
$\alpha_{r_u,u},...,\alpha_{l_u,u}$
(or $\alpha_{r_u},...,\alpha_{l_u}$ when there is
no ambiguity) denote the respective coefficients
in the Laurent polynomial $p_u$.
In the case that $p_u \neq 0$, note that
$\alpha_{l_u,u} \neq 0$;
in the remainder of this proof,
$\alpha_{l_u,u}=0$ implies 
the opposite case that $p_u=0$.

For all $w \in X^*$, let $\nf{w}$ denote
the normal form in $\nfset_{G_p}$ of the element
of $G_p$ represented by $w$.
We note that the language 
$\nfset_{G_p}$ is prefix-closed.
Let $\ga:=\ga(G_p,X)$ be the Cayley graph for
$G_p$ over $X$, let 
$\vec E$ and $\vec P$ be the sets of
directed edges and directed paths in $\ga$, and
let $\tree$ be the maximal tree of $\ga$
corresponding to the set $\nfset_{G_p}$.
For all $u \in \nfset_{G_p}$ and $z \in X$,
let $e_{u,z}$ denote the directed edge in
$\ga$ labeled by $z$ with initial vertex labeled by
the element of $G_p$ represented by $u$.

\bigskip

\noindent{\it Step II. The 
stackable system of \ftp\  van Kampen diagrams for $p<\infty$:}

\smallskip

In this part of the proof we prove that
$G_p$ is stackable over $X$ in the
case when $p$ is finite.  (The case that $p=\infty$ is
similar, and is discussed in Step~IV.)

We obtain the stackable structure
by applying Proposition~\ref{equivalence}
and showing that over the finite presentation
\begin{eqnarray*}
G_p&=&\langle a,s,t\mid a^p=1, [a^t,a]=1, [s,t]=1,
sa^\alpha s^{-1}=ta^\alpha t^{-1}a^\alpha, \\  
&& ~\hspace{1in} sa^\alpha ts^{-1}=a^\alpha ta^\alpha~(\alpha \in \{1,...,p-1\})~\rangle,
\end{eqnarray*}
there is a \ftp\ van Kampen
diagram $\Delta_{u,z}$ with boundary label $uz\nf{uz}^{-1}$
for every normal form word $u \in \nfset_{G_p}$ and generator $z \in X$.
We proceed
via several cases depending upon $u$ and $z$; in each case, we have that 
\ftp\ van Kampen diagrams have been 
constructed for the prior cases.
Also in each case we record the 
corresponding function
$\ff:\vec E \ra \vec P$ on the edge $e_{u,z}$,
and the algorithm to compute this function.

\medskip

\noindent{\bf Case 1}: {\em Suppose that $z=t^{\pm 1}$.}
In this case, either the word $ut^{\pm 1}$ is in normal
form, or else the word $u$ ends with the letter  $t^{\pm 1}$. 
Thus there is a degenerate (and hence \ftp) van Kampen diagram
$\Delta_{u,z}$ with boundary label $uz\nf{uz}^{-1}$,
and $\ff(e_{u,z}):=e_{u,z}$.

\medskip

\noindent{\bf Case 2}: {\em Suppose that
$z=a^{\pm 1}$  and either $p_u(x)=0$ or both $p_u(x) \neq 0$ and
$m_u-l_u\geq 0$.}

\smallskip

\noindent{\bf Case 2.1}: {\em Suppose further that 
$p_u(x) \neq 0$ and $m_u-l_u=0$.}
Then the word $u$ ends with the suffix
$a^{\alpha_{l_u}}$ with $\alpha_{l_u} >0$,
and the last letter of $u$ is $a$.
If $z=a^{-1}$, then 
the normal form of the word $uz$ is
the prefix $u'$ of $u$ satisfying $u=u'a$. 
Thus again
there is a degenerate  diagram
$\Delta_{u,z}$,
and $\ff(e_{u,z}):=e_{u,z}$.
On the other hand, if $z=a$ and
 $\alpha_{l_u}<p-1$, then again the word
$uz$ is in normal form, giving a
degenerate diagram $\Delta_{u,z}$ and $\ff(e_{u,z}):=e_{u,z}$.
Finally, if $z=a$ and $\alpha_{l_u}=p-1$,
we can factor $u=u'a^{p-1}$ for some $u' \in \nfset_{G_p}$,
and so there is a minimal diagram
$\Delta_{u,z}=u'\Delta_{a^{p-1},a}$ 
with a single 2-cell $\Delta_{a^{p-1},a}$ with boundary
label $a^p$; in this case,
$\ff(e_{u,z}):=\path(u,a^{-(p-1)})$.

\smallskip

\noindent{\bf Case 2.2}: {\em Suppose further that either 
$p_u(x) = 0$ or both $p_u(x) \neq 0$ and $m_u-l_u>0$.} 
That is, either $\head(u)=t^{m_u}$, or $\head(u)$
contains the letter $a$ and ends with the letter $t$.  
If $z=a$ then 
the word $uz$ is in normal form
and there is a 
degenerate diagram $\Delta_{u,z}$;  
thus $\ff(e_{u,z}):=e_{u,z}$.
If $z=a^{-1}$, then $\nf{uz}=ua^{p-1}$
and there is a minimal diagram
$\Delta_{u,z}=u\Delta_{1,a^{-1}}$ with a single 2-cell 
$\Delta_{1,a^{-1}}$ with boundary
label $a^p$; in this case,
$\ff(e_{u,z}):=\path(u,a^{p-1})$.

\medskip

\noindent{\bf Case 3}: {\em Suppose that $z=a^{\pm 1}$,
$p_u(x) \neq 0$,  and $m_u-l_u= -1$.} 
%
%
%
Define $\delta \in \{\pm 1\}$ by $z=a^\delta$.
We begin the construction of the \ftp\ van Kampen 
diagram $\Delta_{u,z}$
with an isolated cell with one edge labeled
$z$, and the remaining boundary path
in the other direction labeled
$ta^{-1}t^{-1}a^\delta ta t^{-1}$;
that is, we take
$\ff(e_{u,z}):=\path(u,ta^{-1}t^{-1}a^\delta ta t^{-1})$.
By case 1, we already have \ftp\ diagrams
$\Delta_{u,t}$,
$\Delta_{\nf{uta^{-1}},t^{-1}}$,
$\Delta_{\nf{uta^{-1}t^{-1}a^\delta},t}$,
and
$\Delta_{\nf{uta^{-1}t^{-1}a^\delta ta},t^{-1}}$.
Hence in order to show that we can complete this to a \ftp\  diagram,
it suffices to check that
 \ftp\ diagrams have already been constructed 
for the pairs
$(u_1,a^{-1})$, $(u_2,a^\delta)$, and 
$(u_3,a)$
where
$u_1=\nf{ut}$, 
$u_2=\nf{uta^{-1}t^{-1}}$, and
$u_3=\nf{uta^{-1}t^{-1}a^\delta t}.$ 

Note that the word $u$ ends with the suffix $at^{-1}$.
The word $u_1$ is the prefix of $u$ with the last letter $t^{-1}$
removed, and so $u_1 \in \nfset_{G_p}$ and $u_1$
ends with the letter $a$.
Hence $\Delta_{u_1,a^{-1}}$ was built in case 2.1.

We prove that $\Delta_{u_2,a^\delta}$ has already been
constructed by induction on $\alpha_{l_u,u}$ (and a prior case).
The normal form $u_2$ is obtained from $u$ by removing the 
 last and next-to-last letters $at^{-1}$, and possibly free reduction
(in the case that $\alpha_{l_u,u}=1$ and either
$l_u>r_u$ or $l_u=r_u>0$);
then either ($\head(u_2)=t^{r_u-1}$ and
$\alpha_{l_{u_2},u_2}=0$), or
($\head(u_2)$ contains the
letter $a$ and $m_{u_2}-l_{u_2}=0$), or 
($\head(u_2)$ contains the
letter $a$,
$m_{u_2}-l_{u_2}=-1$ and
$\alpha_{l_{u_2},u_2}<\alpha_{l_u,u}$).
Hence the construction of $\Delta_{u_2,a^\delta}$
follows from case 2 or induction.

Writing 
$u = \tail(u)t^{r_u}a^{\alpha_{r_u,u}} \cdots ta^{\alpha_{l_u,u}}t^{-1}$,
we have
\begin{eqnarray*}
u_3 &=_{G_p}& \tail(u)t^{r_u}a^{\alpha_{r_u,u}} \cdots 
ta^{\alpha_{l_u,u}-1}t^{-1}a^\delta t \\
&=_{G_p}& \tail(u)(t^{r_u}a^{\alpha_{r_u,u}}t^{-r_u}) \cdots 
(t^{l_u}a^{\alpha_{l_u,u}-1}t^{-l_u})(t^{l_u-1}a^\delta t^{-(l_u-1)})t^{l_u} \\
&=_{G_p}& \tail(u)(t^{r_u}a^{\alpha_{r_u,u}}t^{-r_u}) \cdots
(t^{l_u-1}a^{\alpha_{l_u-1,u}+\delta}t^{-(l_u-1)})
  (t^{l_u}a^{\alpha_{l_u,u}-1}t^{-l_u})t^{l_u}.
\end{eqnarray*}
Using the polynomial viewpoint,
$p_{u_3}(x)=p_u(x)+\delta x^{l_u-1} -  x^{l_u}$.
Now either
$\head(u_3)=t^{m_{u_3}}$ (if $p_{u_3}=0$),
or 
$m_{u_3}-l_{u_3} \ge 0$ 
(otherwise);
hence $\Delta_{u_3,a}$ was also constructed in case 2.


\medskip

\noindent{\bf Case 4}: {\em Suppose that $z=s^{-1}$ and $\tail(u)$
does not have a suffix of the form $as$.} 
Either $\tail(u)=s^{-k}$ or $\tail(u)=s$ or
 $\tail(u)$ ends with $s^2$;
 the property that
we exploit in this case is that for all of these options,
we have $\tail(\nf{\tail(u)s^{-1}h})=\nf{\tail(u)s^{-1}}$
and $\head(\nf{\tail(u)s^{-1}h})=\nf{h}$
for all $h \in \{a^{\pm 1},t^{\pm 1}\}^*$.
We proceed by induction on the length $l(\head(u))$ 
of the head of $u$.

Suppose first that $l(\head(u))=0$.  Since $u=\tail(u)$,
either $u=s^{-k}$ (with $k \le 0$), $u=s$, or
$u$ ends with $s^2$.
In all three of these options, either 
$\nf{us^{-1}}=us^{-1}$ or $\nf{us^{-1}}$ 
is the prefix of $u$ obtained
from $u$ by removing a final letter $s$.
Hence there is a degenerate
diagram $\Delta_{u,z}$, and we set
$\ff(e_{u,z}):=e_{u,z}$.

Now suppose that $l(\head(u))>0$, and
write $u=u'z'$ where $z' \in \{a,t^{\pm 1}\}$.

If $z'=t^\delta$ with $\delta \in \{\pm 1\}$,
then we begin the construction of the \ftp\ van Kampen 
diagram $\Delta_{u,z}$
with an isolated cell with 
one edge labeled
$z$, and the remaining boundary path
in the other direction labeled $t^{-\delta}s^{-1}t^\delta$,
and hence set
$\ff(e_{u,z}):=\path(u,t^{-\delta}s^{-1}t^\delta)$.
Now \ftp\ van Kampen diagrams
$\Delta_{u,t^{-\delta}}$ and $\Delta_{\nf{ut^{-\delta}s^{-1}},t^\delta}$
are constructed in case 1,
and since $\nf{ut^{-\delta}}=u'$ satisfies $l(u')=l(u)-1$,
the diagram $\Delta_{\nf{ut^{-\delta}},s^{-1}}$
has been built by induction.

On the other hand if $z'=a$, we build $\Delta_{u,z}$
starting with an isolated cell with 
one edge labeled
$z$, and the remaining boundary path
in the other direction labeled 
$a^{-1}s^{-1}atat^{-1}$,
and so
$\ff(e_{u,z}):=\path(u,a^{-1}s^{-1}atat^{-1})$.
As usual, the required \ftp\ subdiagrams
with isolated edges labeled by $t^{\pm 1}$
have been built in case 1, and the degenerate
diagram $\Delta_{u,a^{-1}}$ is given in case 2.1.
Since $\nf{ua^{-1}}=u'$ is a prefix of $u$,
we again have built the diagram 
$\Delta_{\nf{ua^{-1}},s^{-1}}$
by induction.
So it suffices to show that \ftp\ van Kampen diagrams
have been built for the pairs
$(u_1,a)$, and $(u_2,a)$, where
$u_1=\nf{u a^{-1}s^{-1}}$ and
$u_2=\nf{u a^{-1}s^{-1}at}$.

Since the last letter of $u$ is $z'=a$,
we can write $u=\tail(u)t^{r_u}a^{\alpha_{r_u,u}} \cdots
ta^{\alpha_{l_u,u}}$ where $\alpha_{l_u,u}>0$.
Now
\begin{eqnarray*}
u_1 &=_{G_p}& \tail(u)s^{-1}[s(t^{r_u}a^{\alpha_{r_u,u}} \cdots
ta^{\alpha_{l_u,u}-1} )s^{-1}] \\
&=_{G_p}& \tail(u)s^{-1}[s(t^{r_u}a^{\alpha_{r_u,u}}t^{-r_u}) \cdots
(t^{l_u}a^{\alpha_{l_u,u}-1}t^{-l_u})s^{-1}]t^{l_u}
\end{eqnarray*}
satisfies
$\tail(u_1)=\nf{\tail(u)s^{-1}}$
and 
$$
\head(u_1)=\nf{[s(t^{r_u}a^{\alpha_{r_u,u}}t^{-r_u}) \cdots
(t^{l_u}a^{\alpha_{l_u,u}-1}t^{-l_u})s^{-1}]t^{l_u}}
$$
(applying the property noted at the start of case 4).
Recall that in the polynomial view,
conjugation by $\hat s$ results in multiplication by
$1+x$, and so
$p_{u_1}(x)=[p_u(x)-x^{l_u}](1+x)$
and $\bj(\head(u_1))=p_{u_1}(x)t^{l_u}$.
The polynomial $p_{u_1}(x)$
has degree $l_{u_1}$ that is at most $l_u+1$, and so 
$m_{u_1}-l_{u_1} \ge l_u - (l_u+1)=-1$.
Hence the pair $(u_1,a)$ satisfies the hypotheses
of case 2 or 3, and the
diagram $\Delta_{u_1,a}$ is constructed in one of those cases.

Finally we consider
\begin{eqnarray*}
u_2 &=_{G_p}& \tail(u)s^{-1}[s(t^{r_u}a^{\alpha_{r_u,u}} \cdots
ta^{\alpha_{l_u,u}-1} )s^{-1}]at \\
&=_{G_p}& \tail(u)s^{-1}[s(t^{r_u}a^{\alpha_{r_u,u}}t^{-r_u}) \cdots
(t^{l_u}a^{\alpha_{l_u,u}-1}t^{-l_u})s^{-1}](t^{l_u}at^{-l_u})t^{l_u+1}.
\end{eqnarray*}
Again using the polynomial viewpoint,
we have $p_{u_2}(x)=[p_u(x)-x^{l_u}](1+x)+x^{l_u}$,
which also has degree $l_{u_2} \le l_u+1$,
and $m_{u_2}=l_u+1$,
and so $m_{u_2}-l_{u_2} \ge 0$.
Hence the diagram $\Delta_{u_1,a}$ is constructed in case 2.

\medskip

\noindent{\bf Case 5}: {\em Suppose that $z=s$ and $\head(u)\in\im\atob$.}\label{sIm} 
Similar to the situation in case 4,
a property that
we exploit in this case is that 
$\tail(\nf{\tail(u)sh})=\nf{\tail(u)s}$
and $\head(\nf{\tail(u)sh})=\nf{h}$
for all $h \in \{a^{\pm 1},t^{\pm 1}\}^*$.

Observe that $\head(u)\in\text{Im}\ppi$ implies 
that $p_u(x)=q_u(x)(1+x)$ for some $q_u(x)\in\Z_p[x,\frac{1}{x}]$
that can be written in the form
$$
q_u(x)=\gamma_{r_u,u}x^{r_u}+\cdots+\gamma_{l_u-1,u}x^{l_u-1}
$$
with each $\gamma_{i,u} \in \Z_p$, and
$\gamma_{l_u-1,u}=\alpha_{l_u,u}$.

We proceed by induction on the number $\occ_t(u)$
of occurrences
of $t^{\pm 1}$ in the word $\head(u)$.
Since $x+1$ divides $p_u(x)$,
the polynomial $p_u(x)$ cannot be a single
monomial, and so either $p_u(x)=0$ or 
$|l_u-r_u|>0$.

First suppose that $\occ_t(u)=0$.
Since $p_u(x)$ is not a single monomial,
$\head(u)$ cannot be a nontrivial power of $a$,
and so $\head(u)=\emptyword$
and $u=\tail(u)$.  The normal form $\nf{us}$ is either 
$us$ or the word $u$ with a final letter $s^{-1}$
removed.  In this case 
there is a degenerate
diagram $\Delta_{u,z}$, and we set
$\ff(e_{u,z}):=e_{u,z}$. 

Now suppose that $\occ_t(u)>0$, and
write $u=u'z'$ with $u' \in \nfset_{G_p}$ and
$z' \in \{a,t^{\pm 1}\}$.

If $z'=t^{\delta}$ with $\delta \in \{t^{\pm 1}\}$,
then as in case 4 we begin the 
diagram $\Delta_{u,z}$
with an isolated cell labeled by $zt^{-\delta}s^{-1}t^{\delta}$
and set
$\ff(e_{u,z}):=\path(u,t^{-\delta}st^\delta)$.
The required \ftp\ diagrams
with isolated edges labeled $t^{\pm 1}$
are constructed in case 1, and
since $\nf{ut^{-\delta}}=u'$ satisfies 
$\occ_t(u')=\occ_t(u)-1<\occ_t(u)$, the diagram $\Delta_{u',z}$
has already been built by induction.

Suppose instead that $z'=a$.
Now  
$u=\tail(u)u''t^{\delta}a^{\alpha_{l_u,u}}$
for some $\delta \in \{\pm 1\}$,
$u'' \in \nfset_{H_p}$, and $\alpha_{l_u,u}>0$.  
Recall that we are considering
the case that $p<\infty$ in Step II, and so
$\alpha_{l_u,u} \in \{1,2,...,p-1\}$.
Again using the fact that $p_u(x)$ is
not a single monomial, the exponent $\delta$
must be $1$, and the word $u'' \in \nfset_{H_p}$
contains an occurrence of the letter $a$
and ends either with $a$ or  $t$.

We build $\Delta_{u,z}$ with
an isolated cell labeled 
$zt^{-1}a^{-\alpha_{l_u,u}}s^{-1}a^{\alpha_{l_u,u}}ta^{\alpha_{l_u,u}}$;
then 
$\ff(e_{u,z})=\path(u,a^{-\alpha_{l_u,u}}t^{-1}a^{-\alpha_{l_u,u}}
sa^{\alpha_{l_u,u}}t)$. 
For all $0 \le i \leq \alpha_{l_u,u}-1$,
there is a degenerate diagram
$\Delta_{ua^{-i},a^{-1}}$
from case 2.1,
and by case 1 it remains to show that
we have built \ftp\ diagrams associated to
the pairs 
$(u_{1,i},a^{-1})$, 
$(u_2,s)$, and $(u_{3,i},a)$  for $0 \le i < \alpha_{l_u,u}$, where
$u_{1,i}=\nf{ua^{-\alpha_{l_u,u}}t^{-1}a^{-i}}$, 
$u_2=\nf{ua^{-\alpha_{l_u,u}}t^{-1}a^{-\alpha_{l_u,u}}}$, and
$u_{3,i}=\nf{ua^{-\alpha_{l_u,u}}t^{-1}a^{-\alpha_{l_u,u}}sa^i}$.

Using the factorization of $u$ above,
$u_{1,i}=\nf{\tail(u)u''a^{-i}}$; writing
$u''=u'''a^j$ (with $0 \le j \le p-1$), then
$u_{1,i}=\tail(u)u'''a^{\tilde j}$ where
$\tilde j \in \{0,...,p-1\}$ and $\tilde j \equiv j-i$~(mod~$p$).
This normal form $u_{1,i}$ satisfies either $p_{u_{1,i}}(x)=0$, or else
$p_{u_{1,i}} \neq 0$ and $m_{u_{1,i}}-l_{u_{1,i}} \ge 0$,
and so the diagram $\Delta_{u_{1,i},a^{-1}}$ has been constructed
in case 2.

An analysis of the normal form 
$u_2=\nf{ua^{-\alpha_{l_u,u}}t^{-1}a^{-\alpha_{l_u,u}}}$  yields
\begin{eqnarray*}
u_2 &=_{G_p}& \tail(u)t^{r_u}a^{\alpha_{r_u,u}} \cdots
ta^{\alpha_{l_u,u}}a^{-\alpha_{l_u,u}}t^{-1}a^{-\alpha_{l_u,u}} \\
&=_{G_p}& \tail(u)[(t^{r_u}a^{\alpha_{r_u,u}}t^{-r_u}) \cdots 
(t^{l_u}a^{\alpha_{l_u,u}}t^{-l_u})][(t^{l_u}a^{-\alpha_{l_u,u}}t^{-(l_u)})
(t^{l_u-1}a^{-\alpha_{l_u,u}}t^{-(l_u-1)})]t^{l_u-1}.
\end{eqnarray*}
Then $p_{u_2}=p_u-\alpha_{l_u,u}x^{l_u-1}(x+1)$, and so
$\head(u_2) \in \im\atob$. 
Hence the pair $(u_2,s)$ satisfies the properties
of case 5.  Moreover, since 
$$
u_2=\nf{\tail(u)u''a^{-\alpha_{l_u,u}}}=
\nf{\tail(u)u'''a^ja^{-\alpha_{l_u,u}}}=
\tail(u)u'''a^{\tilde j}
$$
where $\tilde j \in \{0,...,p-1\}$ and 
$\tilde j \equiv j-\alpha_{l_u,u}$~(mod~$p$),
the normal form $u_2$
satisfies 
$\occ_t(u_2)=\occ_t(u)-1<\occ_t(u)$, and the diagram $\Delta_{u_2,s}$
has already been built by induction.

Finally we note that
\begin{eqnarray*}
u_{3,i} &=_{G_p}& u_2sa^i \\
&=_{G_p}& \tail(u)s[s^{-1}(t^{r_u}a^{\alpha_{r_u,u}}t^{-r_u}) \cdots 
(t^{l_u-1}a^{\alpha_{l_u-1,u}-\alpha_{l_u,u}}t^{-(l_u-1)})s]
(t^{l_u-1}a^it^{-(l_u-1)})t^{l_u-1},
\end{eqnarray*}
and so (using the property noted at the beginning
of case 5) $\tail(u_{3,i})=\nf{\tail(u)s}$
and 
$$
\head(u_{3,i})=\nf{[s^{-1}(t^{r_u}a^{\alpha_{r_u,u}}t^{-r_u}) \cdots 
(t^{l_u-1}a^{\alpha_{l_u-1,u}-\alpha_{l_u,u}}t^{-(l_u-1)})s]
(t^{l_u-1}a^it^{-(l_u-1)})t^{l_u-1}}.
$$
In the polynomial viewpoint, the conjugation action of $\hat s^{-1}$
on the element $p_{u_2}=p_u-\alpha_{l_u,u}x^{l_u-1}(x+1)$
of $\im \hat \atob$ is division by $1+x$, and so
$$
[s^{-1}(t^{r_u}a^{\alpha_{r_u,u}}t^{-r_u}) \cdots 
(t^{l_u-1}a^{\alpha_{l_u-1,u}-\alpha_{l_u,u}}t^{-(l_u-1)})s]
=_{G_p} \bj^{-1}(q_u-\alpha_{l_u,u}x^{l_u-1}).
$$
That is, $p_{u_{3,i}}=q_u-\alpha_{l_u,u}x^{l_u-1}+ix^{l_u-1}$. 
Now either $p_{u_{3,i}}=0$ (in the case that
$i=0$ and $p_u=\alpha_{l_u,u}x^{l_u-1}(x+1)$) or
else $p_{u_{3,i}} \neq 0$ and
$l_{u_{3,i}} \le l_u-1 =m_{u_{3,i}}$
(and so $m_{u_{3,i}}-l_{u_{3,i}} \ge 0$). Thus the diagram
$\Delta_{u_{3,i},a}$ is constructed in case 2.

\medskip

\noindent{\bf Case 6}:  {\em Suppose that $z=a^{\pm 1}$,
$p_u(x) \neq 0$, and $m_u-l_u< -1$.}
Write $z=a^\delta$.
The Laurent polynomial 
$p_u(x)=\sum_{i=r_u}^{l_u} \alpha_{i,u}x^i$
can be written 
$$
p_u(x)=q_u(x)(1+x)+R_u
$$ 
for some Laurent polynomial $q_u(x) \in \Z_p[x,\frac{1}{x}]$ and
$R_u \in \Z_p$.  (More specifically, set  $R_u=p_u(-1)$ and let $q_u(x) \in \Z_p[x,\frac{1}{x}]$ be the Laurent polynomial such that
 $p_u(x)-R_p=q_u(x)(x+1)$ which can be found by multiplying $p_u(x)-R_p$ by a suitable power of $x$ and then
 applying the Euclidean
algorithm). Define 
$$
d_\delta(u) = (d_{1,\delta}(u),d_{2,\delta}(u)):=
(|m_u-l_u|,R_u-\delta(-1)^{m_u+1}) \in 
\{i \in \N \mid i \ge 2\} \times \{0,...,p-1\},
$$
where we consider the element $R_u-\delta(-1)^{m_u+1}$
of $\Z_p$ to be an integer in $\{0,...,p-1\} \subset \N_0$.

Consider the lexicographic ordering $\prec$ on 
$\{i \in \N \mid i \ge 2\} \times \{0,...,p-1\} \subset
\N \times \N_0$ (that is,
$(i_1,i_2) \prec (j_1,j_2)$ whenever 
either $i_1<j_1$ or else both $i_1=j_1$ and $i_2<j_2$);
this is a well-founded strict partial ordering.
We proceed by (Noetherian) induction on this ordering on 
$d_\delta(u)$.

For the base case, suppose  that 
$d_{\delta}(u)=(2,0)$.
Then $m_u-l_u=-2$
and $p_u(-1)-\delta(-1)^{m_u+1}=0$.
We begin the diagram $\Delta_{u,z}$ with
 an isolated cell with
an isolated edge labeled $z$, and edge path
in the other direction labeled
$ta^{-\delta}t^{-1}sa^{\delta}s^{-1}$; that is,
$\ff(e_{u,z}):= \path(u,ta^{-\delta}t^{-1}sa^{\delta}s^{-1}).$
The subdiagrams $\Delta_{u,t}$ and $\Delta_{\nf{uta^{-\delta}},t^{-1}}$
are constructed in case 1, and so 
there are four remaining subdiagrams needed to build
$\Delta_{u,z}$ that we need to show have already been 
constructed: 
$\Delta_{u_1,a^{-\delta}}$,
$\Delta_{u_2,s}$, 
$\Delta_{u_3,a^\delta}$,  and 
$\Delta_{u_4,s^{-1}}$, where
$u_1=\nf{ut}$,
$u_2=\nf{uta^{-\delta}t^{-1}}$,
$u_3=\nf{uta^{-\delta}t^{-1}s}$, and
$u_4=\nf{uta^{-\delta}t^{-1}sa^{\delta}}$.

Since $u_1:=\nf{ut}$
satisfies $m_{u_1}-l_{u_1}=-1$, the diagram 
$\Delta_{\nf{ut},a^{-\delta}}$ is built in case 3.

The normal form $u_2:=\nf{uta^{-\delta}t^{-1}}$ satisfies
\begin{eqnarray*}
u_2 &=_{G_p}& \tail(u)(t^{r_u}a^{\alpha_{r_u,u}}t^{-r_u}) \cdots 
 (t^{l_u}a^{\alpha_{l_u,u}}t^{-l_u})t^{m_u} t a^{-\delta} t^{-1}\\
&=_{G_p}& \tail(u)(t^{r_u}a^{\alpha_{r_u,u}}t^{-r_u}) \cdots 
 (t^{l_u}a^{\alpha_{l_u,u}}t^{-l_u})(t^{m_u+1} a^{-\delta}t^{-(m_u+1)})t^{m_u}
\end{eqnarray*}
and so $p_{u_2}(x)=p_u(x)-\delta x^{m_u+1}$.
Now $p_{u_2}(-1)=p_u(-1)-\delta (-1)^{m_u+1}=d_{2,\delta}(u)=0$.
Then $1+x$ divides the Laurent polynomial
$p_{u_2}$; that is, $p_{u_2}$ is in the ideal of $\Z_p[x,\frac{1}{x}]$
generated by $1+x$, and applying the isomorphism
$\ism$ shows that $\head(u_2) \in \im\atob$.
Hence the diagram $\Delta_{u_2,s}$ is built in case 5.

Next $u_3=\nf{uta^{-\delta}t^{-1}s}=\nf{u_2s}$ satisfies
\begin{eqnarray*}
u_3 &=_{G_p}& \tail(u)(t^{r_u}a^{\alpha_{r_u,u}}t^{-r_u}) \cdots 
 (t^{l_u}a^{\alpha_{l_u,u}}t^{-l_u})(t^{m_u+1} a^{-\delta}t^{-(m_u+1)})t^{m_u}s \\
u_3 &=_{G_p}& \tail(u)s[s^{-1}(t^{r_u}a^{\alpha_{r_u,u}}t^{-r_u}) \cdots 
 (t^{l_u}a^{\alpha_{l_u,u}}t^{-l_u})(t^{m_u+1} a^{-\delta}t^{-(m_u+1)})s]t^{m_u}.
\end{eqnarray*}
As in case 5, we note that $\tail(u_3)=\nf{\tail(u)s}$,
and so the normal form of the rest of the expression to
the right of $\tail(u)s$ is the head of $u_3$.
Then $p_{u_3}(x) = (p_u(x)-\delta x^{m_u+1})/(x+1)$;
since $m_u+1 < l_u$, the polynomial $p_{u_3}$
has degree $l_{u_3} = l_u-1$.  Since $m_{u_3}=m_u$,
then $m_{u_3}-l_{u_3} = m_u-l_u+1=-1$, and again
we apply case 3 to show that the diagram $\Delta_{u_3,a^\delta}$
has already been constructed.

Now $u_4 = \nf{u_3a^\delta}$ satisfies
$$
u_4 =_{G_p} \tail(u)s[s^{-1}(t^{r_u}a^{\alpha_{r_u,u}}t^{-r_u}) \cdots 
 (t^{l_u}a^{\alpha_{l_u,u}}t^{-l_u})(t^{m_u+1} a^{-\delta}t^{-(m_u+1)})s][t^{m_u}a^{\delta}t^{-m_u}]t^{m_u}.
$$
Then again $\tail(u_4)=\nf{\tail(u)s}$, and so 
either $\tail(u_4)$ is a power of $s^{\pm 1}$ or ends with $s^2$,
but $\tail(u_4)$ cannot end with $as$.  Hence $\Delta_{u_4,s^{-1}}$
is constructed in case~4.

For the inductive step, suppose that $d_\delta(u) \succ (2,0)$.

Suppose further that $d_{2,\delta}(u)=0$ (and hence
$d_{1,\delta}(u)=|m_u-l_u|>2$).
We follow nearly the same proof as
in the $d_\delta(u)=(2,0)$ (base) case above;
the isolated cell of $\Delta_{u,z}$ is labeled 
$zsa^{-\delta}s^{-1}ta^{\delta}t^{-1}$, and
$\ff(e_{u,z}):= \path(u,ta^{-\delta}t^{-1}sa^{\delta}s^{-1}).$
The only differences with the proof of that base case
is that the applications of case 3 are replaced with
induction.  In particular,
$u_1$ satisfies $m_{u_1}-l_{u_1}=m_u-l_u+1$,
implying that $d_{1,\delta}(u_1)=|m_{u_1}-l_{u_1}|=
|m_u-l_u|+1<|m_u-l_u|=d_{1,\delta}(u)$,
and so $d_\delta(u_1) \prec d_\delta(u)$ and 
the construction of $\Delta_{u_1,a^{-\delta}}$
follows from induction.  Similarly the
fact that $m_{u_3}-l_{u_3} = m_u-l_u+1$ implies that 
$\Delta_{u_3,a^\delta}$ is built by induction.

On the other hand suppose that $d_{2,\delta}(u) > 0$.
(Here we have $d_{1,\delta}(u)=|m_u-l_u| \ge 2$.)
Let 
$$
\eta:=(-1)^{m_u+1}.
$$
The construction of $\Delta_{u,z}$ begins
with an isolated cell having
isolated edge labeled $z$, and edge path
in the other direction labeled
$ta^{-\eta}t^{-1}a^\delta ta^{\eta}t^{-1}$
then
$\ff(e_{u,z}):=\path(u,ta^{-\eta}t^{-1}a^\delta ta^{\eta}t^{-1})$.
Applying case 1 to obtain the required (degenerate)
diagrams with isolated edges labeled $t^{\pm 1}$,
we have left to check whether the three \ftp\ diagrams 
$\Delta_{u_1,a^{-\eta}}$,
$\Delta_{\tilde u_2,a^\delta}$ and, 
$\Delta_{\tilde u_3,a^{\eta}}$
have already been constructed, where
$u_1=\nf{ut}$,
$\tilde u_2=\nf{uta^{-\eta}t^{-1}}$, and
$\tilde u_3=\nf{uta^{-\eta}t^{-1}a^\delta t}$.

For the first of these, the arguments
above show that $|m_{u_1}-l_{u_1}|=|m_u-l_u|-1$,
and so $d_\delta(u_1) \prec d_\delta(u)$; 
the construction of $\Delta_{u_1,a^{-\eta}}$
follows from case 3 or induction.

Replacement of  $\delta$ by $\eta$ in
the computation for $u_2$ above shows that
$$\tilde u_2 =_{G_p} \tail(u)(t^{r_u}a^{\alpha_{r_u,u}}t^{-r_u}) \cdots 
 (t^{l_u}a^{\alpha_{l_u,u}}t^{-l_u})(t^{m_u+1} a^{-\eta}t^{-(m_u+1)})t^{m_u}
$$
and $p_{\tilde u_2}(x)=p_u(x)-\eta x^{m_u+1}$.
Again using the fact that $m_u+1<l_u$,
then $p_{\tilde u_2}(x) \neq 0$,
$m_{\tilde u_2}=m_u$, and $l_{\tilde u_2} = l_u$.  
We have
$d_{1,\delta}(\tilde u_2) = d_{1,\delta}(u)$, and
$R_{\tilde u_2}=p_{\tilde u_2}(-1)=
p_u(-1)-\eta (-1)^{m_u+1}=R_u-1$. 
Then
$$
d_{2,\delta}(\tilde u_2) = R_{\tilde u_2}-\delta(-1)^{m_{\tilde u_2}}=
R_u-1-\delta (-1)^{m_u}=d_{2,\delta}(u)-1< d_{2,\delta}(u).
$$
Therefore 
$d_\delta(\tilde u_2) \prec d_\delta(u)$ and the construction
of the diagram $\Delta_{\tilde u_2,a^\delta}$
follows from induction.

Finally we consider
\begin{eqnarray*}
\tilde u_3 &=_{G_p}& \tail(u)(t^{r_u}a^{\alpha_{r_u,u}}t^{-r_u}) \cdots 
 (t^{l_u}a^{\alpha_{l_u,u}}t^{-l_u})t^{m_u}ta^{-\eta}t^{-1}a^{\delta} t\\
&=_{G_p}& \tail(u)(t^{r_u}a^{\alpha_{r_u,u}}t^{-r_u}) \cdots 
 (t^{l_u}a^{\alpha_{l_u,u}}t^{-l_u})
    (t^{m_u+1}a^{-\eta}t^{-(m_u+1)})(t^{m_u}a^\delta t^{-m_u})t^{m_u+1},
\end{eqnarray*}
which has associated polynomial
$p_{\tilde u_3}(x)=p_u(x)-\eta x^{m_u+1}+\delta x^{m_u}$.
Here 
$m_{\tilde u_3}=m_u+1$ and $l_{\tilde u_3}=l_u$,
and so
$d_{1,\delta}(\tilde u_3) < d_{1,\delta}(u)$ and
$d_\delta(\tilde u_2) \prec d_\delta(u)$, and so the construction
of $\Delta_{\tilde u_3,a^{\eta}}$
follows from induction.

\medskip

\noindent{\bf Case 7}: {\em Suppose that $z=s^{-1}$ and $\tail(u)$ has a
suffix $as$.} 
We proceed by induction on the length $l(\head(u))$.

Suppose first that $l(\head(u))=0$.
Then $\nf{us^{-1}}=u'$,
and so there is a degenerate diagram $\Delta_{u,z}$ and
$\ff(e_{u,z})=e_{u,z}$.

Suppose next that $l(\head(u))>0$.
Write $u=u'z'$ with $u' \in \nfset_{G_p}$ and
$z' \in \{a,t^{\pm 1}\}$.

If $z'=t^{\delta}$ with $\delta \in \{t^{\pm 1}\}$,
the construction and inductive proof are identical to
the $z'=t^{\delta}$ subcase of case 4,
with
$\ff(e_{u,z}):=\path(u,t^{-\delta}st^\delta)$.

On the other hand suppose that 
$z'=a$.
We begin building $\Delta_{u,z}$ with an
isolated cell labeled 
$zta^{-1}t^{-1}a^{-1}sa$;
that is,
$\ff(e{u,z})=\path(u,a^{-1}s^{-1}atat^{-1})$.
Now case 1 provides \ftp\ van Kampen diagrams
$\Delta_{\hat u,t^{\delta}}$
and cases 2,3, and 6 provide \ftp\ van Kampen
diagrams $\Delta_{\hat u,a^{\delta}}$
for all $\hat u \in \nfset_{G_p}$ and $\delta \in \{\pm 1\}$,
yielding diagrams
$\Delta_{u,a^{-1}}$,
$\Delta_{\nf{ua^{-1}s^{-1}},a}$,
$\Delta_{\nf{ua^{-1}s^{-1}a},t}$,
$\Delta_{\nf{ua^{-1}s^{-1}at},a}$, and
$\Delta_{\nf{ua^{-1}s^{-1}ata},t^{-1}}$.
Since $\nf{ua^{-1}}=u'$ satisfies 
$l(\head(u'))=l(\head(u))-1<l(\head(u))$,
the diagram $\Delta_{\nf{ua^{-1}},s^{-1}}$
has also already been constructed, by induction.

\medskip

\noindent{\bf Case 8}: {\em Suppose that $z=s$ and $\head(u)\not\in\im\atob$.}
As in case 5, we proceed by induction on the number $\occ_t(u)$
of occurrences
of $t^{\pm 1}$ in $\head(u)$.

Suppose first that $\occ_t(u)=0$.  
Then $\head(u)=a^{\alpha_{l_u}}$, and since
$\head(u) \notin\im\atob$, then $\alpha_{l_u}>0$.  
In this case the word $us$
is in normal form, and so we have a degenerate
diagram $\Delta_{u,z}$ and $\ff(e_{u,z})=e_{u,z}$.

Now suppose that $\occ_t(u)>0$, and
write $u=u'z'$ with $u' \in \nfset_{G_p}$ and
$z' \in \{a,t^{\pm 1}\}$.

If $z'=t^{\delta}$ with $\delta \in \{t^{\pm 1}\}$,
the construction of $\Delta_{u,z}$ has isolated
cell labeled $zt^{-\delta}s^{-1}t^\delta$,
with
$\ff(e_{u,z}):=\path(u,t^{-\delta}st^\delta)$.
Since $\nf{ut^{-\delta}}=u'$ satisfies
$\occ_t(u')=\occ_t(u)-1<\occ_t(u)$, the diagram $\Delta_{u',z}$
has already been constructed by induction.

Suppose instead that $z'=a$.
Now  
$\head(u)=u''t^{\delta}a^{\alpha_{l_u}}$
for some $\delta \in \{\pm 1\}$ and
$u'' \in \nfset_{H_p}$.  Again we recall that
$p<\infty$ in Step II, and so
$\alpha_{l_u} \in \{1,2,...,p-1\}$.

If $\delta=1$,
we build $\Delta_{u,z}$ with
an isolated cell labeled 
$zt^{-1}a^{-\alpha_{l_u}}s^{-1}a^{\alpha_{l_u}}ta^{\alpha_{l_u}}$;
then 
$\ff(e_{u,z})=\path(u,a^{-\alpha_{l_u}}t^{-1}a^{-\alpha_{l_u}}
sa^{\alpha_{l_u}}t)$.
Cases 1,2,3, and 6 provide \ftp\ van Kampen
diagrams corresponding to the edges labeled
$t^{\pm 1}$ and $a^{\pm 1}$ along the boundary of
this isolated cell.  Now the normal form
$\tilde u:=\nf{ua^{-\alpha_{l_u}}t^{-1}a^{-\alpha_{l_u}}}=
\nf{u''a^{-\alpha_{l_u}}}$ is obtained
from $u''a^{-\alpha_{l_u}}$ by possible reduction
modulo $p$ of a final power of $a$; then $\tilde u$
satisfies $\occ_t(\tilde u)=\occ_t(u)-1<\occ_t(u)$,
and again induction applies to show that the
diagram $\Delta_{\tilde u,s}$ has already been constructed.

If $\delta=-1$, we 
build $\Delta_{u,z}$ with
an isolated cell labeled 
$za^{-\alpha_{l_u}}t^{-1}s^{-1}a^{\alpha_{l_u}}t^{-1}a^{\alpha_{l_u}}$;
then 
$\ff(e_{u,z})=\path(u,a^{-\alpha_{l_u}}ta^{-\alpha_{l_u}}
st^{-1}a^{\alpha_{l_u}})$.  As before,
cases 1,2,3, and 6 provide \ftp\ van Kampen
diagrams corresponding to the edges labeled
$t^{\pm 1}$ and $a^{\pm 1}$.  The normal form
$\hat u:=\nf{ua^{-\alpha_{l_u}}ta^{-\alpha_{l_u}}}=
u''a^{-\alpha_{l_u}}$,
and so $\occ_t(\tilde u)<\occ_t(u)$.
Once again induction applies to show that the
diagram $\Delta_{\hat u,s}$ has already been constructed,
as required.

\bigskip

\noindent{\it Step III. Autostackability of $G_p$ for $p<\infty$:}
\smallskip

In this section again we consider the $p<\infty$ case.
Throughout this step we will repeatedly apply the
closure properties of regular and \syreg\ languages
discussed in Section~\ref{sec:notation}.

Before analyzing the graph of the 
stacking map, we first discuss the
set $\nfset_{G_p}$ of normal forms.
Let
\begin{eqnarray*}
\Tail &:=& \{s^{-k}a^{\beta_1}sa^{\beta_2}s\ldots sa^{\beta_n}s
\mid
k,n \ge 0, \beta_i\in\Z_p\text{ for }1\leq i\leq n, \\
&& ~\hspace{.7in} \text{ and }\beta_1 \neq 0 
\text{ if both } k>0 \text{ and } n>0\};
\end{eqnarray*}
that is, $\Tail$ is the set of tails of elements of 
$\nfset_{G_p}$.  Then
$$
\Tail = (s^{-1})^*(\{\emptyword,a,a^2,...,a^{p-1}\}s)^*
\setminus X^*s^{-1}sX^*
$$
is built from finite subsets of $X^*$
using concatenation, complement, and Kleene star operations, 
and hence is a regular language.  Similarly the 
normal form set $\nfset_{H_p}$ 
for $H_p$ can be written
$$
\nfset_{H_p} = 
(t^* \cup (t^{-1})^*)(\{\emptyword,a,a^2,...,a^{p-1}\}t)^*
\{\emptyword,a,a^2,...,a^{p-1}\}(t^* \cup (t^{-1})^*)
\setminus X^*\{tt^{-1},t^{-1}t\}X^*
$$
and so $\nfset_{H_p}$ is also regular.
Finally the normal form set $\nfset_{G_p}$
is the concatenation
$\nfset_{G_p}=\Tail \cdot \nfset_{H_p}$ of these 
two regular languages, and therefore
$\nfset_{G_p}$ is also regular.

The stacking map associated to the 
flow function $\ff$ in Step II of this proof is given by
$$
\sff(u,z):=
\begin{cases}
%
%
z   & \text{if } z=t^{\pm 1}
\end{cases}
$$
from case 1,
$$
\sff(u,z):=
\begin{cases}
%
%
z   & \text{if } z=a \text{ and } 
                    u \in \Tail \cdot (t^{-1})^* \cup X^*t 
                   \cup (X^*a \setminus X^*a^{p-1}) \\
a^{-(p-1)}
    & \text{if } z=a \text{ and } 
                    u \in X^*a^{p-1} \\
z   & \text{if } z=a^{-1} \text{ and } u \in X^*a \\
a^{p-1}
    & \text{if } z=a^{-1} \text{ and } 
                    u \in \Tail \cdot (t^{-1})^* \cup X^*t \\
ta^{-1}t^{-1}a^\delta ta t^{-1}
    & \text{if } z=a^{\delta},~\delta \in \{\pm 1\}, \text{ and } 
                    u \in X^*at^{-1} \\
%
%
ta^{-\delta}t^{-1}sa^{\delta}s^{-1}
    & \text{if } z=a^{\delta},~\delta \in \{\pm 1\}, \text{ and } 
                    u \in X^*aY^*t^{-2} \\
& ~\hspace{.4in} \text{ and }
                    p_u(-1)-\delta(-1)^{m_u+1} \equiv 0 (\text{mod }p)\\
ta^{-\eta}t^{-1}a^\delta ta^{\eta}t^{-1}
    & \text{if } z=a^{\delta},~\delta \in \{\pm 1\}, \text{ and } 
                    u \in X^*aY^*t^{-2}, \\
& ~\hspace{.4in} 
                    p_u(-1)-\delta(-1)^{m_u+1} \not\equiv 0 (\text{mod }p), \\
& ~\hspace{.4in}             \text{ and } \eta=(-1)^{m_u+1}  

\end{cases}
$$
from cases 2,3, and 6 (where we recall that $Y=\{a^{\pm 1},t^{\pm 1}\}$),
$$
\sff(u,z):=
\begin{cases}
%
%
z   & \text{if } z=s^{-1} \text{ and }
                    u \in \Tail \\
t^{-\delta}s^{-1}t^\delta
    & \text{if } z=s^{-1} \text{ and } 
                    u \in X^*t^\delta,~\delta \in \{\pm 1\} \\
a^{-1}s^{-1}atat^{-1}
    & \text{if } z=s^{-1} \text{ and } 
                    u \in X^*a \\
\end{cases}
$$
from cases 4 and 7, and
$$
\sff(u,z):=
\begin{cases}
%
%
z   & \text{if } z=s \text{ and }
                    u \in \Tail  \cdot a^*\\ 
t^{-\delta}st^\delta
    & \text{if } z=s \text{ and } 
                    u \in X^*t^\delta,~\delta \in \{\pm 1\} \\
a^{-\alpha}t^{-1}a^{-\alpha}sa^{\alpha}t
    & \text{if } z=s \text{ and } 
                    u \in X^*ta^{\alpha},~\alpha 
                     \in \{1,...,p-1\} \\
a^{-\alpha}ta^{-\alpha} st^{-1}a^{\alpha}
    & \text{if } z=s \text{ and }  
                    u \in X^*t^{-1}a^{\alpha},~\alpha 
                     \in \{1,...,p-1\} 
\end{cases}
$$
from cases 5 and 8.
The graph $\graph(\sff)$ of this function is a union of
15 languages, one for each of the pieces of
this piecewise defined function.  
We consider each of these languages in order.

The first language is
$L_1=\cup_{\delta \in \{\pm 1\}} \nfset_{G_p} \times \{t^{\delta}\}
\times \{t^{\delta}\}$ (from case 1).  
Since this is a product of regular
languages, $L_1$ is regular.
The next five languages,
arising from cases 2 and 3, are
\begin{eqnarray*}
L_2 &=& [\nfset_{G_p} \cap (\Tail \cdot (t^{-1})^* \cup X^*t 
                   \cup (X^*a \setminus X^*a^{p-1}))] 
\times \{a\}  \times \{a\}, \\
L_3&=& [\nfset_{G_p} \cap X^*a^{p-1}] \times \{a\} \times \{a^{-(p-1)}\}, \\
L_4&=& [\nfset_{G_p} \cap X^*a] \times \{a^{-1}\} \times \{a^{-1}\}, \\
L_5&=& [\nfset_{G_p} \cap \Tail \cdot (t^{-1})^* \cup X^*t]
 \times \{a^{-1}\} \times \{a^{p-1}\}, \hspace{.2in} \text{and}\\
L_6&=& \cup_{\delta \in \{\pm 1\}}
       [\nfset_{G_p} \cap X^*at^{-1} ] \times \{a^{\delta}\}
 \times \{ta^{-1}t^{-1}a^\delta ta t^{-1}\},
\end{eqnarray*}
and regularity of $\nfset_{G_p}$ together with 
the closure properties show that all five are \syreg.
Similarly the last 7 languages, corresponding 
to the graph of the pieces of $\sff$ defined
in cases 4,7,5, and 8, are
\begin{eqnarray*}
L_9 &=& \Tail \times \{s^{-1}\} \times \{s^{-1}\},\\
L_{10} &=& \cup_{\delta \in \{\pm 1\}}
      [\nfset_{G_p} \cap X^*t^\delta]
        \times \{s^{-1}\} \times \{t^{-\delta}s^{-1}t^\delta\}, \\
L_{11} &=& [\nfset_{G_p} \cap X^*a] \times \{s^{-1}\} \times 
       \{a^{-1}s^{-1}atat^{-1}\}, \\
L_{12} &=& [\Tail  \cdot a^*] \times \{s\} \times \{s\}, \\
L_{13} &=& \cup_{\delta \in \{\pm 1\}}
      [\nfset_{G_p} \cap X^*t^\delta]
        \times \{s\} \times \{t^{-\delta}st^\delta\}, \\
L_{14} &=& \cup_{\alpha \in \{1,...,p-1\}}
      [\nfset_{G_p} \cap X^*ta^{\alpha}] \times \{s\}
      \times \{a^{-\alpha}t^{-1}a^{-\alpha}sa^{\alpha}t\},
    \hspace{.2in} \text{and} \\
L_{15} &=& \cup_{\alpha \in \{1,...,p-1\}}
      [\nfset_{G_p} \cap X^*t^{-1}a^{\alpha}] \times \{s\}
      \times \{a^{-\alpha}ta^{-\alpha} st^{-1}a^{\alpha}\}.
\end{eqnarray*}
Regularity of the languages $\Tail$ and $\nfset_{G_p}$
and closure properties also show that these
7 languages are \syreg.

The remaining two subsets of $\graph(\sff)$ arise 
from case 6, namely
\begin{eqnarray*}
L_{7} &=& 
   \cup_{\delta \in \{\pm 1\}}\cup_{\eta \in \{\pm 1\}}
      [X^*aY^*t^{-2} \cap 
     \Tail \cdot (M_{\eta} \cap N_{\delta,\eta})] \\
&& \hspace{2in}
    \times \{a^{\delta}\} \times \{ta^{-\delta}t^{-1}sa^{\delta}s^{-1}\}
  \hspace{.1in} \text{ and} \\
L_8 &=& 
   \cup_{\delta \in \{\pm 1\}}\cup_{\eta \in \{\pm 1\}}
      [X^*aY^*t^{-2} \cap 
     \Tail \cdot (M_{\eta} \cap (\nfset_{H_p} \setminus N_{\delta,\eta}))] \\
&& \hspace{2in}
  \times \{a^{\delta}\} \times \{ta^{-\eta}t^{-1}a^\delta ta^{\eta}t^{-1}\},
\end{eqnarray*}
where
\begin{eqnarray*}
M_{\eta}&=&\{u \in \nfset_{H_p}  \mid
     (-1)^{m_u+1}=\eta\},
\text{ and} \\ 
N_{\delta,\eta}&=&
\{u \in \nfset_{H_p} \cap Y^*aY^*t^{-1}\mid 
  p_u(-1) \equiv \delta\eta(\text{mod }p)\}.
\end{eqnarray*}
In order to show that $L_7$ and $L_8$ are \syreg,
it suffices to show that the languages 
$M_\eta$ and $N_{\delta,\eta}$ are regular.

For $u \in \nfset_{H_p}$, the integer
$m_u$ is the sum of the exponents of 
the letters $t^{\pm 1}$ in $u$. That is,
$M_1$ is the set of words in $\nfset_{H_p}$
with $t$-exponent sum odd, and
 $M_{-1}$ is the set of words in $\nfset_{H_p}$
with $t$-exponent sum even.
Let $\gamma:Y^* \ra \Z/2$ be the monoid homomorphism
to the finite monoid $Z/2$
defined by $\gamma(a)=\gamma(a^{-1})=0$
and $\gamma(t)=\gamma(t^{-1})=1$.  The
preimage sets $\gamma^{-1}(\{0\})$
and $\gamma^{-1}(\{1\})$ are regular sets
(see Section~\ref{sec:notation}).  Then 
$M_1=\nfset_{H_p} \cap \gamma^{-1}(\{1\})$
and 
$M_{-1}=\nfset_{H_p} \cap \gamma^{-1}(\{0\})$
are intersections of regular languages,
and hence $M_\eta$ is regular for $\eta \in \{\pm 1\}$.

Given any $u \in \nfset_{H_p} \cap Y^*aY^*t^{-1}$,
as usual we write 
$p_u(x)=\alpha_{r_u,u}x^{r_u}+ \cdots +\alpha_{l_u,u}x^{l_u}$,
where
$u=t^{r_u}a^{\alpha_{r_u,u}} \cdots t a^{\alpha_{l_u,u}} t^{-l_u+m_u}$.
Then
$
p_u(-1)  
=(-1)^{r_u}(\alpha_{r_u,u}+ \cdots +\alpha_{l_u,u}(-1)^{l_u-r_u}.
$
Splitting this into separate cases depending
upon whether $r_u$ is even or odd yields
$$
N_{\delta,\eta}=
[((t^2)^* \cup (t^{-2})^*)\widetilde N_{\delta,\eta}(t^{-1})^*]
~\cup~
[(t(t^2)^* \cup t^{-1}(t^{-2})^*)\widetilde N_{-\delta,\eta}(t^{-1})^*]
$$
where 
\begin{eqnarray*}
\widetilde N_{\delta,\eta} &:=& 
\{a^{\alpha_0}ta^{\alpha_{1}} \cdots t a^{\alpha_{l}}t^{-1}
\mid 0 \le l,~\alpha_{i} \in \{0,1,...,p-1\} \text{ for }
0 \le i \le l, \\
&& \hspace{.5in} \alpha_0,\alpha_l \neq 0, 
\text{ and }
\alpha_0-\alpha_1+\cdots +(-1)^{\alpha_l} \equiv \delta\eta(\text{mod }p)
\}.  
\end{eqnarray*}
It now suffices to show that $\widetilde N_{\delta,\eta}$
is regular.

In order to show that $\widetilde N_{\delta,\eta}$
is regular, we describe a finite
state automaton (FSA) accepting this language.  (We refer
the reader to~\cite{echlpt} and~\cite{hu} for
details on the definition of a finite state automaton.)
The alphabet of the FSA is the set $\{a,t,t^{-1}\}$, and the
set of states (i.e., the finite memory)
of the automaton is
$$
Q = \big( \Z_p \times \{y,n\} \times \{\pm 1\} \times \Z_p \big) 
\cup \{I,F\}.
$$
The initial state is $I$,
and the set of accept states is
$\{\delta\eta\} \times \{y\} \times  \{\pm 1\} \times \{0\}$. 
The transition function $d:Q \times \{a,t^{\pm 1}\} \ra Q$,
which determines the state that the FSA
moves to depending upon its current state and
the next letter that it reads, is defined
by $d(I,a):=(0,n,1,1)$, $d(I,t^{\pm 1})=F$,
$d((i,n,\mu,j),a):=(i,n,\mu,j+1)$ if $0 \le j<p-1$,
$d((i,n,\mu,p-1),a):=F$,
$d((i,n,\mu,j),t):=(i+\mu j(\text{mod }p),n,-\mu,0)$,
$d((i,n,\mu,0),t^{-1}):=F$,
$d((i,n,\mu,j),t^{-1}):=(i+\mu j(\text{mod }p),y,-\mu,0)$ 
if $0< j \le p-1$, and
$d((i,y,\mu,j),z):=F$
and $d(F,z):=F$ for all $z \in \{a,t^{\pm 1}\}$.
That is, $F$ is a ``failure'' state, and 
for the states other than $I,F$,
the first copy of $\Z_p$ records the
alternating sum $\alpha_0-\alpha_1+...$ computed
so far, the $y/n$ records whether a $t^{-1}$ letter
has occurred, the $\pm 1$ records whether
the current $a^{\alpha_i}$ word being read has
$(-1)^i$ equal to 1 or -1, and the final copy of $\Z_p$
records the number of $a$ letters that have been
read since the last occurrence of a letter $t$.
A word $u$ is accepted by this FSA if and
only if it starts with the letter $a$,
contains no substring $a^p$ or $tt^{-1}$, ends with the
letter $t^{-1}$ but contains no other
occurrence of that letter, and
satisfies $p_u(-1) \equiv \delta\eta(\text{mod }p)$.
This in turn holds iff
$u$ lies in $\widetilde N_{\delta,\eta}$.

We now have that all of the languages $L_1,...,L_{15}$ 
are \syreg.  Therefore
their union
$\graph(\sff)=\cup_{i=1}^{15} L_i$ is also \syreg,
as required.

\bigskip

\noindent{\em Step IV.  Algorithmically stackable system
of \ftp\ van Kampen diagrams for $p=\infty$:}
\smallskip

In this part of the proof we consider the case that $p=\infty$.
The construction
of the flow function can be done in a very similar way to the proofs in earlier steps so we leave the details
to the reader and just state the stacking map that one gets over the finite presentation
$$
G_\infty=\langle a,s,t\mid a^p=1, [a^t,a]=1, [s,t]=1,
sas^{-1}=ta t^{-1}a, sa ts^{-1}= ata\rangle.
$$

$$
\sff(u,z):=
\begin{cases}
%
%
z   & \text{if } z=t^{\pm 1}\\
%
%
z   & \text{if } z=a^{\pm 1} \text{ and } 
                    u \in \Tail \cdot (t^{-1})^* \cup X^*t 
                   \cup X^*a^{\pm 1}  \\
ta^{-\epsilon}t^{-1}a^{\delta}ta^{\epsilon}t^{-1}
   & \text{if } z=a^{\delta}, \delta,\epsilon \in \{\pm 1\}, \text{ and }
     u \in X^* a^{\epsilon}t^{-1} \\
%
%
ta^{-\delta}t^{-1}sa^{\delta}s^{-1}
    & \text{if } z=a^{\delta},~\delta \in \{\pm 1\}, \text{ and } 
                    u \in X^*a^{\pm 1}Y^*t^{-2} \\
& ~\hspace{.4in} \text{ and }
                    p_u(-1)-\delta(-1)^{m_u+1} = 0 \\
ta^{-\eta}t^{-1}a^\delta ta^{\eta}t^{-1}
    & \text{if } z=a^{\delta},~\delta \in \{\pm 1\}, \text{ and } 
                    u \in X^*a^{\pm 1}Y^*t^{-2}, \\
& ~\hspace{.4in} 
                    p_u(-1)-\delta(-1)^{m_u+1} \neq 0 \\
& ~\hspace{.4in}             \text{ and } \eta=(-1)^{m_u+1} \text{ if } p_u(-1)-\delta(-1)^{m_u+1} >0, \\
 & ~\hspace{.4in}          \eta=(-1)^{m_u} \text{ otherwise},\\
%
%
z   & \text{if } z=s^{-1} \text{ and }
                    u \in \Tail \\
t^{-\delta}s^{-1}t^\delta
    & \text{if } z=s^{-1} \text{ and } 
                    u \in X^*t^\delta,~\delta \in \{\pm 1\} \\
a^{-\delta}s^{-1}a^{\delta}ta^{\delta}t^{-1}
    & \text{if } z=s^{-1} \text{ and } 
                    u \in X^*a^{\delta}, \delta \in \{\pm 1\} \\
%
%
z   & \text{if } z=s \text{ and }
                    u \in \Tail  \cdot (a^* \cup (a^{-1})^*)\\ 
t^{-\delta}st^\delta
    & \text{if } z=s \text{ and } 
                    u \in X^*t^\delta,~\delta \in \{\pm 1\} \\
a^{-\delta}t^{-1}a^{-\delta}sa^{\delta}t
    & \text{if } z=s \text{ and } 
                    u \in X^*ta^{\delta}(a^{\delta})^*,~\delta 
                     \in \{\pm 1\} \\
a^{-\delta}ta^{-\delta} st^{-1}a^{\delta}
    & \text{if } z=s \text{ and }  
                    u \in X^*t^{-1}a^{\delta}(a^{\delta})^*,~\delta 
                     \in \{\pm 1\} 
\end{cases}
$$

\end{proof}

As noted in Section~\ref{sec:intro},
the following is immediate from Theorem~\ref{thm:metabelian}.

\begin{corollary}\label{cor:metabelian}
The class of autostackable groups contains
nonconstructible metabelian groups.
\end{corollary}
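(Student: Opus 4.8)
The plan is to read the corollary off from Theorem~\ref{thm:metabelian} together with facts already recalled in the introduction. Fix an integer $p \geq 2$ and consider the torsion group
$$
G_p = \langle a,s,t \mid a^s = a^ta,\ [a^t,a]=1,\ [s,t]=1,\ a^p=1\rangle .
$$
First I would record that $G_p$ is metabelian: it is the Diestel--Leader group $\Gamma_3(p)$, and metabelianity is immediate from the structure $H_p \cong \Z_p \wr \Z$ of its base subgroup together with the ascending HNN presentation $G_p = H_p \ast_{\atob}$ (see also~\cite{stw}). Thus $G_p$ is a finitely presented metabelian group.

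The second step is to see that $G_p$ is not constructible. By the result of Bartholdi, Neuhauser and Woess~\cite{bnw}, each $G_p = \Gamma_3(p)$ fails to have homological type $\FP_3$, and hence a fortiori is not of type $\FP_\infty$. The theorem of Groves and Smith~\cite{grovessmith} characterizes the constructible metabelian groups as exactly those of type $\FP_\infty$; since $G_p$ is metabelian and not of type $\FP_\infty$, it is therefore not constructible.

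Finally, Theorem~\ref{thm:metabelian} supplies an autostackable structure for $G_p$ for every $p \geq 2$. Combining the three steps, each such $G_p$ is a nonconstructible metabelian group belonging to the class of autostackable groups, which is exactly the assertion of the corollary. There is essentially no obstacle here: the only point requiring care is to chain the implications in the right direction, namely that failure of $\FP_3$ implies failure of $\FP_\infty$, which in turn implies non-constructibility via Groves--Smith. I would also note in passing that Baumslag's original metabelian group $G_\infty$ is likewise nonconstructible---its commutator subgroup has infinite rank---but Theorem~\ref{thm:metabelian} only establishes that $G_\infty$ is algorithmically stackable, so it is the finite-$p$ analogs $G_p$ that witness the corollary.
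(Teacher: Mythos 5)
Your argument is exactly the paper's: the corollary is stated as immediate from Theorem~\ref{thm:metabelian} together with the facts recalled in the introduction, namely that the Diestel--Leader groups $G_p$ ($p\ge 2$) are finitely presented metabelian groups failing $\FP_3$ by~\cite{bnw}, hence not $\FP_\infty$ and so nonconstructible by Groves--Smith~\cite{grovessmith}. Your closing remark that $G_\infty$ only gets algorithmic stackability, so the finite-$p$ groups are the actual witnesses, is a correct and worthwhile clarification, but the route is the same.
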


\section*{Acknowledgment}

The first author was partially supported by grants from the 
National Science Foundation (DMS-1313559) and the
Simons Foundation (\#245625) and the second author was supported by Gobierno de Aragon, European Regional Development Funds and MTM2015-67781-P.


\end{document}